\def\ps@pprintTitle{
 \let\@oddhead\@empty
 \let\@evenhead\@empty
 \def\@oddfoot{}
 \let\@evenfoot\@oddfoot}
 \renewcommand{\MaketitleBox}{
  \resetTitleCounters
  \def\baselinestretch{1}
  \begin{center}
    \def\baselinestretch{1}
    \Large \@title \par
    \vskip 18pt
    \normalsize\elsauthors \par
    \vskip 10pt
    \footnotesize \itshape \elsaddress \par
  \end{center}
  \vskip 12pt
}
\newcommand{\mb}{\mathbb}
\newcommand{\mc}{\mathcal}
\newcommand{\mfT}{{\mathcal{T} }}
\newcommand{\mT}{\mathrm{T}}
\newcommand{\mS}{\mathrm{S}}
\newcommand{\mfF}{{\mathfrak{F}}}
\newtheorem{theorem}{Theorem}[section]
\newtheorem{lemma}[theorem]{Lemma}
\newtheorem{proposition}[theorem]{Proposition}
\numberwithin{equation}{section}
\newtheorem{remark}{Remark}
\newtheorem{assumption}{A\!\!}[section]
\begin{document}

\begin{frontmatter}
\title{Infinite-Dimensional LQ Mean Field Games with Common Noise: Small and Arbitrary Finite Time Horizons
\footnote{The authors would like to acknowledge the helpful comments of Michèle Breton and Minyi Huang.}\footnote{Funding: Dena Firoozi would like to acknowledge the support of the Natural Sciences and Engineering Research Council of Canada (NSERC), grants RGPIN-2022-05337 and DGECR-2022-00468. Hanchao Liu would like to acknowledge the support of IVADO through the NSERC-CREATE Program on Machine Learning in Quantitative Finance and Business Analytics (Fin-ML CREATE).}
}
\author[1]{Hanchao Liu} \author[2]{Dena Firoozi}
\address[1]{Department of Decision Sciences, HEC Montréal, Montreal, QC, Canada\\
(email: hanchao.liu@hec.ca)}
\address[2]{Department of Statistical Sciences, University of Toronto, Toronto, ON, Canada\\ (email: dena.firoozi@utoronto.ca)}
\begin{abstract}
We develop the theory of linear-quadratic (LQ) mean field games (MFGs) in Hilbert spaces with common noise modeled by an infinite-dimensional Wiener process that affects the dynamics of all agents. This common noise is modeled as an infinite-dimensional Wiener process affecting the dynamics of all agents. In the presence of common noise, the mean-field consistency condition is characterized by a system of coupled forward-backward \textit{stochastic} evolution equations (FBSEEs) in Hilbert spaces, whereas, in its absence it reduces to a system of coupled forward-backward \textit{deterministic} evolution equations. We establish the existence and uniqueness of solutions to the coupled linear FBSEEs associated with the LQ MFG framework for small time horizons and prove the $\epsilon$-Nash property of the resulting equilibrium strategy. Furthermore, we establish the well-posedness of these coupled linear FBSEEs for arbitrary finite time horizons. Beyond the specific context of MFGs, our analysis also yields a broader contribution by providing, to the best of our knowledge, the first well-posedness result for a class of infinite-dimensional linear FBSEEs, for which only mild solutions exist, over arbitrary finite time horizons.
\end{abstract}
\begin{keyword}
LQ mean field games, Stochastic equations in Hilbert spaces, Common noise, Small and arbitrary finite horizons, Infinite-dimensional forward-backward stochastic equations.
\end{keyword}
\end{frontmatter}

\section{Introduction} 
Mean field game (MFG) theory studies dynamic games involving a large number of indistinguishable agents, each of whom becomes asymptotically negligible as the population size grows. In such games, agents interact only weakly, through the empirical distribution of their states or control actions. As the number of agents tends to infinity, this empirical distribution converges to the so-called mean field distribution. The optimal behavior of individuals in these large populations, as well as the resulting equilibrium, can then be approximated by the solution of the corresponding infinite-population game (see, e.g., \cite{huang2006large, huang2007large, lasry2007mean, carmona2018probabilistic, bensoussan2013mean, cardaliaguet2019master}).

 MFGs have been applied across a wide range of domains, most notably in financial markets, where they provide a powerful framework for modeling diverse problems. In particular, applications include systemic risk (\cite{carmona2013mean,Bo2015SystemicRiskInterbanking,chang2022Systemic}), price impact and optimal execution (\cite{casgrain_meanfield_2020, FirooziISDG2017, Cardaliaguet2018Meanfieldgame, huang2019mean}), portfolio trading (\cite{lehalle2019mean,Fu-Horst-2021,Fu-Horst-2022}), and equilibrium pricing (\cite{Firoozi2022MAFI, gomes_mean-field_2018, fujii_mean_2021}).

A central assumption in classical MFGs is the presence of idiosyncratic noise, typically modeled as independent Brownian motions affecting individual agents' dynamics. However, in many real-world settings, agents are also influenced by common sources of randomness--external factors that affect all agents simultaneously, induce dependencies among their dynamics, and impact their collective behavior. For instance, in financial models, macroeconomic factors such as monetary policy announcements, aggregate demand shocks, or systemic market events may be modeled as such common sources of randomness. Mathematically, in an MFG model, this phenomenon is typically represented by a Brownian motion that affects the dynamics of all agents, referred to as common noise. The presence of  common noise introduces additional challenges in analyzing and solving MFGs. In particular, the mean-field consistency condition is characterized by a system of coupled forward-backward stochastic differential equations (FBSDEs), which reduces to a deterministic system in the absence of common noise.

MFGs with common noise have been discussed and studied in, for instance, \cite{carmona2013mean,carmona2016mean,lacker2016general,BENSOUSSAN2015CommonNoise,Lions-common-noise-2007}. In analogy with the theory of stochastic differential equations, \cite{carmona2016mean} and  \cite{lacker2016general} formulate notions of strong and weak solutions for MFGs and establish the existence of weak solutions under broad assumptions. In these works, the evolution of the state distribution in the limiting case, where the number of agents tends to infinity, is captured as a random measure flow adapted to the filtration generated by the common noise. This research direction has been pursued in several subsequent studies, such as \cite{lacker2015translation, kolokoltsov2019mean, lacker2023closed}. Linear-quadratic (LQ) MFGs with common noise are studied in \cite{graber2016linear}, where the solution is presented both in terms of a system of forward-backward SDEs and via a pair of Riccati equations. Specifically, the mean field is characterized by the conditional expectation of the state of a representative agent  with respect to the filtration generated by the common noise in the limiting case. Other works on LQ MFGs with common noise include, but are not limited to, \cite{ahuja2016wellposedness, li2023linear, tchuendom2018uniqueness, bensoussan2021linear,ren2024risk}. A related development in MFG theory is the inclusion of a major agents into the framework. Unlike minor agents, whose influence diminishes as the number of agents grows, the impact of a major agent remains substantial and does not vanish as the population size approaches infinity (\cite{huang2010large, firoozi2020convex, carmona2016probabilistic,nourian2013epsilon,firoozi2022LQG,huang2020linear,carmona2017alternative}). Similarly to common noise, the presence of a major agent makes the mean field stochastic, adapted to the filtration generated by the major agent's Brownian motion.

 A recent line of research in MFGs aims to extend the theory to scenarios in which agents’ dynamics are modeled in infinite-dimensional spaces, as first theoretically studied in \cite{liu2025hilbert} and \cite{federico2024linear} for LQ settings. Such models enable the study of MFGs in which agents' behavior is governed by non-Markovian dynamics--such as delayed systems--by lifting the state process to an infinite-dimensional space, as first discussed and heuristically explored in \cite{fouque2018mean} and \cite{carmona2018systemic} in the context of systemic risk models. This research direction has been subsequently developed in recent works, including \cite{federico2024mean} for a class of nonlinear MFGs, \cite{firoozi-Kratsios-Yang-2025} for learning solutions to infinite-dimensional LQ MFGs using neural operators, \cite{particle-systems-hilbert-2025} for the optimal control of interacting particles in Hilbert spaces, \cite{jaber2023equilibrium} for related static MFGs involving control path-dependent cost functionals, and \cite{LQ-master-eq-Hilbert-2025} for LQ master equations in Hilbert spaces. There are also studies on mean field systems and MFGs in which the analysis leads to infinite-dimensional equations, such as \cite{dunyak2024quadratic,2GraphonGames2025,hetero-MF-sys-2025,Hetero-MF-sys-Pham-2025}, that differ from MFGs defined in Hilbert spaces.
 
 To the best of our knowledge, infinite-dimensional MFGs with common noise have not yet been explored in the literature. In this work, we study such games within an LQ framework. More specifically, the main contributions of this paper are summarized as follows. 
\begin{itemize}
    \item[(i)] We study a general \( N \)-player LQ game with common noise in Hilbert spaces. Specifically, the state dynamics of each agent are governed by a linear stochastic evolution equation driven by both idiosyncratic and common noises, modeled as infinite-dimensional Wiener processes with distinct covariance operators. The drift and diffusion coefficients of the dynamics depend on both the individual state and the average state of the population. Each agent seeks to choose its strategy so as to minimize a quadratic cost functional. We establish regularity conditions for a system of $N$ coupled semilinear stochastic evolution equations in Hilbert spaces, driven by both idiosyncratic and common noises, to ensure the well-posedness of the $N$-player games under consideration. 
    \item[(ii)] We study the limiting problem as the number of agents $N$ tends to infinity and characterize a Nash equilibrium strategy for a representative agent. This involves establishing a unique fixed point to the mean-field consistency condition, which is characterized by a system of coupled forward-backward \textit{stochastic} evolution equations (FBSEEs) in Hilbert spaces. The presence of common noise makes this problem more mathematically involved, whereas in its absence one only needs to address the well-posedness of a system of coupled forward-backward \textit{deterministic} evolution equations, as studied in \cite{liu2025hilbert} and \cite{federico2024linear}. Furthermore, we establish the 
$\epsilon$-Nash property for the obtained equilibrium strategy.
    \item[(iii)] We establish the existence and uniqueness of solutions to the resulting system of coupled linear FBSEEs associated with the LQ MFG framework for both \textit{small} and \textit{arbitrary} finite time horizons. In contrast, in our previous work on Hilbert-space valued LQ MFGs without common noise \cite{liu2025hilbert}, this was established only for a \textit{small} time horizon. Beyond the specific context of MFGs, our analysis also yields a broader contribution by providing, to the best of our knowledge, the first well-posedness result for a class of coupled linear FBSEEs in Hilbert spaces 
over arbitrary finite time horizons. To establish existence, we identify a decoupling field and develop an analytical approach for coupled linear FBSEEs in Hilbert spaces, where only mild, rather than strong, solutions exist. This approach differs from that in \cite{federico2024linear}, which considers a class of coupled forward-backward \textit{deterministic} evolution equations. Moreover,  our framework applies to a broader class of Hilbert space-valued LQ MFGs with stochastic diffusion coefficients 
that also incorporate common noise. Naturally, it also applies to the MFG models studied in \cite{liu2025hilbert} and \cite{federico2024linear}. 
\end{itemize}

The organization of this paper is as follows. \Cref{sec:notation} introduces the notations used in the paper. \Cref{cabscom} presents regularity results for coupled semilinear stochastic evolution equations with common noise in Hilbert spaces. \Cref{HLMFGcom} studies LQ MFGs with common noise in Hilbert spaces, including the resulting stochastic control problem in the limiting case, the well-posedness of the mean-field consistency equations over small time horizons, and the Nash equilibrium strategy and its 
$\epsilon$-Nash property. Finally, \Cref{arbit-time} establishes the well-posedness of the mean-field consistency equations over arbitrary finite time horizons.

\section{List of Notations}\label{sec:notation}
\begin{itemize}
    \item $\mfT=[0,T]$: Time horizon;  $\mathcal{N}=\{1, 2, \ldots, N\}$: index set; $\mathbb{N}=\{1, 2, \ldots\}$: Set of positive integers.
    \item $H, U, V$: Separable Hilbert spaces. Each space is equipped with its own norm, defined as 
    $\left| x \right|_{H,U,V} = \sqrt{\langle x,x \rangle_{H,U,V}}$ and has an orthonormal basis $\{e_i^{H,U,V}\}_{i\in \mathbb{N}}$. For brevity, the space-specific indices are omitted when the context is clear.
    \item $H^{N}$: $N$-product space of $H$ equipped with the product norm $ \left|\textbf{x} \right|_{H^{N}}=\left (\sum_{i\in \mc{N}} \left|x_i \right|_{H}^{2}  \right  )^{\frac{1}{2}}$ for all $\textbf{x}=(x_1, \cdots, x_N)\in H^N$.
    \item $\mT^\ast$: Adjoint of operator $\mT$; $\Sigma(H)$: Set of self-adjoint operators; $\Sigma^{+}(H)$: Set of positive operators.
    \item $\mathcal{L}(V;H)$: Space of bounded linear operators $\mT$ from $V$ to $H$ equipped with the norm $\left \| \mT \right \|_{\mathcal{L}(V;H)}=\sup_{\left | x \right |_{V}=1}\left | \mT x \right |_{H}$; $\mathcal{L}(H):=\mathcal{L}(H;H)$.
     \item $\mathcal{L}_1(V;H)$: Space of trace class operators from $V$ to $H$; $\mathcal{L}_1(V):=\mathcal{L}_1(V;V)$.
    \item $V_Q\!=\!Q^{\frac{1}{2}}V$: Separable Hilbert space endowed with the inner product $
\left \langle \!u,v\! \right \rangle_{V_{Q}}\!\!\!=\!\!\sum_{j \in \mathbb{N}:\lambda_j>0}\frac{1}{\lambda_j}\left \langle\! u,e_j \!\right \rangle_V\!\left \langle \!v,e_j \!\right \rangle_V$, for every $u,v \in V_Q 
$ and the positive operator $Q\in \mathcal{L}_1(V)$, i.e. $Q\in \mathcal{L}_1(V)$ is self-adjoint and satisfies $\langle Qx,x\rangle \geq 0,\, \forall x \in V$.
    \item $\mathcal{L}_2(V;H)$: Space of Hilbert-Schmidt operators from $V$ to $H$ equipped with the inner product $\left \langle \mT,\mS  \right \rangle_2 :=\sum_{i \in\mathbb{N}}  \left \langle \mT e_i,\mS e_i  \right \rangle_{H}$ for all $\mT,\mS \in \mathcal{L}_2(V;H)$. 
   \item $\mathcal{B}(\mathcal{X})$: Borel $\sigma$-algebra on the Banach space $\mathcal{X}$.
 \item \!$\mathcal{M}^2_\mathcal{F}(\mfT;\mathcal{X})$: Banach 
space of all $\mathcal{X}$-valued progressively measurable processes $x=\{x(t): t \in \mfT\}$ with respect to the filtration $\mc{F}=\{\mc{F}_t: t \in \mfT\}$ satisfying 
$
    \left|x  \right|_{\mathcal{M}^2_\mc{F}(\mfT;\mathcal{X})}\!:=\left (\mathbb{E}\int_{0}^{T}\left|x(t) \right|^{2}_{\mathcal{X}}dt  \right )^{\frac{1}{2}}< \infty.$
    \item $\mathcal{H}^2_\mc{F}(\mfT;\mathcal{X})$: Banach 
space of all $\mathcal{X}$-valued progressively measurable processes $x=\{x(t): t \in \mfT\}$ with respect to the filtration $\mc{F}=\{\mc{F}_t: t \in \mfT\}$ satisfying 
$
\left|x \right|_{\mathcal{H}^2_\mc{F}(\mfT;\mathcal{X})}:= \Big (\displaystyle \sup_{t \in \mfT} \mathbb{E}\left|x(t) \right|_{\mathcal{X}} ^{2}\Big )^{\frac{1}{2}}< \infty.
$
\end{itemize}

\section{Coupled Semilinear Stochastic Evolution Equations with Common Noise in Hilbert Space} \label{cabscom}

In this section, we study the well-posedness of a set of $N$ coupled semilinear stochastic evolution equations driven by $N$ independent $Q$-Wiener processes \(\left\{W_{i} \right\}_{i \in \mc{N}}\) and a common $Q_0$-Wiener process \( W_0 \), all defined on $\big(\Omega, \mfF, \mathbb{P} \big)$, and taking values in the Hilbert space $V$. Here, $Q$ and $Q_0$ are positive covariance operators in $\mathcal{L}_1(V)$. 

\begin{remark} By applying the enumeration of $\mathbb{N}\times \mathbb{N}$ to the sequence of mutually independent real-valued Brownian motions $\left \{ \beta _j \right \}_{j \in\mathbb{N}}$, we can obtain infinitely many distinct sequences of Brownian motions $ \{ \beta^{i}_j \}_{j \in \mathbb{N}}=\{\beta _{1}^{i}, \beta _{2}^{i}, \dots, \beta _{j}^{i}, \dots \}$, each sequence indexed by $i\in\mathbb{N} \cup \left\{ 0\right\}$.
The infinite-dimensional common noise $W_0=\{W_0(t): t\in \mfT\}$, with $W_0(t) \in V$ and the covariance operator $Q_0$, may be constructed using the first subsequence $ \{ \beta^{1}_j \}_{j\in \mathbb{N}}$, as    
\begin{equation}
W_{0}(t)=\sum_{j\in \mathbb{N}}\sqrt{\lambda^{0} _{j}}\beta ^{1}_{j}(t)e^{0}_{j},\quad t \in \mfT,
\end{equation} 
where \( \{ \lambda_j^0, e_j^0 \}_{j \in \mathbb{N}} \) denotes the set of the eigenvalues and the corresponding eigenvectors of \( Q_0 \). Moreover, the element $i$ of the sequence of independent $Q$-Wiener processes $\left\{W_{i}\right\}_{i\in \mathbb{N}}$, with $W_{i}(t)\in V$, is constructed as 
\begin{equation} \label{wiei}
W_{i}(t)=\sum_{j \in\mathbb{N}}\sqrt{\lambda _{j}}\beta ^{i+1}_{j}(t)e_{j},\quad t \in \mfT,   
\end{equation} 
where $\{ \beta^{i+1}_j \}_{j\in \mathbb{N}}$ denotes the $(i+1)$-st subsequence of Brownian motions and \( \{ \lambda_j, e_j \}_{j \in \mathbb{N}} \) denotes the set of the eigenvalues and the corresponding eigenvectors of \( Q\) (for more details see \cite[Prop. 3.1]{liu2025hilbert}.).
\end{remark}

Now consider a system of $N$ coupled stochastic evolution equations driven by idiosyncratic $Q$-Wiener processes and  influenced by a common $Q_0$-Wiener process. For each equation $i$, $i \in \mathcal{N}$, the state at time $t, \, t \in \mfT$, is represented by $x_i(t) \in H$ and satisfies 
\begin{align} \label{mildcoupcom}
x_i(t)=&S(t)\xi _i+\int_{0}^{t}S(t-r)F_i(r,\textbf{x}(r),u_i(r))dr+\int_{0}^{t}S(t-r)B_i(r,\textbf{x}(r),u_i(r))dW_{i}(r)\notag \\ &+\int_{0}^{t}S(t-r)B_0(r,\textbf{x}(r),u_i(r))dW_{0}(r), \quad   
\end{align}
where the corresponding control input is represented by $u_i(t) \in U$. The vector process $\textbf{x}=\{\textbf{x}(t)=(x_1(t), x_2(t), \ldots, x_N(t)):\, t\in \mfT\}$ is an $H^{N}$-valued stochastic process. The $C_{0}$-semigroup $S(t) \in \mathcal{L}(H),\, t \in \mfT$, is generated by an unbounded linear operator $A$, with domain $\mathcal{D}(A)$, such that 
\begin{equation}
\left\|S(t) \right\|_{\mathcal{L}(H)} \leq  M_T:= M_A e^{\alpha T},\quad \forall t \in \mfT,
\end{equation}
with $M_A \geq 1$ and $\alpha \geq 0$ \cite{goldstein2017semigroups}. The mappings $F_i: \mfT\times H^{N} \times U \rightarrow H$ and $B_i: \mfT\times H^{N} \times U \rightarrow \mathcal{L}_2(V_Q,H), \forall i \in \mathcal{N}$, are defined for all $i\in \mathcal{N}$. Moreover, the mapping $B_0$ is defined as $B_0: \mfT\times H^{N} \times U \rightarrow \mathcal{L}_2(V_{Q_{0}},H)$. 

The filtration $\mathcal{F}^0 = \{\mathcal{F}^0_t : t \in \mfT\}$ denotes the natural filtration generated by $W_0$, augmented with the collection $\mathfrak{N}$ of $\mathbb{P}$\nobreakdash-null sets in $\mfF$. Similarly, the filtration $\mathcal{F}^{[N],0} = \{\mathcal{F}^{[N],0}_t : t \in \mfT\}$ denotes the natural filtration generated by $\{W_i\}_{i \in \mathcal{N}}$, $W_0$ and the initial conditions $\{\xi_i\}_{i \in \mathcal{N}}$, also augmented with $\mathfrak{N}$\,\footnote{Due to \Cref{ass-add}, the filtrations $\mathcal F^0$ and $\mathcal F^{[N],0}$ satisfy the usual conditions.}.

To establish the well-posedness of the set of equations given by \eqref{mildcoupcom}, we impose the following assumptions.
\begin{assumption} \label{as1com}   
 $u_i \in \mathcal{M}^2_{\mathcal{F}^{[N],0}}(\mfT;U)$.
\end{assumption}
\begin{assumption} \label{ass-add}   
 The initial conditions $\{\xi_i\}_{i\in\mathcal N}$ are independent of $W_0$ and $\{W_i\}_{i\in\mathcal N}$.
\end{assumption}
\begin{assumption}
\label{as2com}
    The mappings $F_i: \mfT\times H^{N} \times U \rightarrow H $ is $\mathcal{B}\left ( \mfT \right ) \otimes \mathcal{B}(H^{N})\otimes \mathcal{B}(U)/ \mathcal{B}(H) $-measurable. 
    \end{assumption}
\begin{assumption}\label{as4com}
   The mappings $B_i: \mfT\times H^{N} \times U \rightarrow \mathcal{L}_2(V_Q,H)$ are $\mathcal{B}\left ( \mfT \right )\otimes\mathcal{B}(H^{N}) \otimes \mathcal{B}(U)/\mathcal{B}(\mathcal{L}_2(V_Q,H))$-measurable and the mapping $B_0: \mfT\times H^{N} \times U \rightarrow \mathcal{L}_2(V_{Q_{0}},H)$ is $\mathcal{B}\left ( \mfT \right )\otimes\mathcal{B}(H^{N}) \otimes \mathcal{B}(U)/\mathcal{B}(\mathcal{L}_2(V_{Q_{0}},H))$-measurable.
 \end{assumption}  
 \begin{assumption} \label{as5com}
 There exists a constant $C$ such that, for every $t \in \mfT$, $u \in U$, and $\textbf{x},\textbf{y} \in H^N$, we have
    \begin{align*}
     &\left|F_i(t,\textbf{x},u)-F_i(t,\textbf{y},u) \right|_H+\left\| B_i(t,\textbf{x}, u)-B_i(t,\textbf{y}, u)\right\|_{\mathcal{L}_2}+\left\| B_0(t,\textbf{x}, u)-B_0(t,\textbf{y}, u)\right\|_{\mathcal{L}_2}\leq C \left|\textbf{x}-\textbf{y} \right |_{H^{N}}, \notag \allowdisplaybreaks\\
    &\left|F_i(t,\textbf{x}, u) \right|^2_H+\left\| B_i(t,\textbf{x}, u)\right\|_{\mathcal{L}_2}^2+\left\| B_0(t,\textbf{x}, u)\right\|_{\mathcal{L}_2}^2\leq C^{2}\left (1+\left|\textbf{x} \right|^2_{H^{N}}+ \left|u \right|^{2}_U  \right ).    \notag
    \end{align*}
    \end{assumption}
 
\begin{theorem}(Existence and Uniqueness of a Mild Solution) \label{enucom}
Under \Cref{as1com}-\Cref{as5com}, the set of coupled stochastic evolution equations given by \eqref{mildcoupcom} admits a unique mild solution in the space $\mathcal{H}^{2}_{\mathcal{F}^{[N],0}}(\mfT;H^N)$.     
\end{theorem}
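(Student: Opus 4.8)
The plan is to realize the mild solution as the unique fixed point of a map on the Banach space $\mathcal{H}^2_{\mathcal{F}^{[N],0}}(\mfT;H^N)$, viewing the $N$ coupled equations in \eqref{mildcoupcom} as a single equation for the vector process $\textbf{x}$. Define $\Phi$ by letting $(\Phi\textbf{x})_i(t)$ be the right-hand side of \eqref{mildcoupcom} with the given control $u_i$ held fixed (admissible by \Cref{as1com}). First I would verify that $\Phi$ maps $\mathcal{H}^2_{\mathcal{F}^{[N],0}}(\mfT;H^N)$ into itself. Since $\mathcal{H}^2_{\mathcal{F}^{[N],0}}(\mfT;H^N)\subset\mathcal{M}^2_{\mathcal{F}^{[N],0}}(\mfT;H^N)$ — because $\mathbb{E}\int_0^T|\textbf{x}(t)|_{H^N}^2\,dt\le T\sup_{t\in\mfT}\mathbb{E}|\textbf{x}(t)|_{H^N}^2$ — and $u_i\in\mathcal{M}^2$, the linear-growth bound in \Cref{as5com} makes the integrands $r\mapsto F_i(r,\textbf{x}(r),u_i(r))$, $B_i(r,\textbf{x}(r),u_i(r))$, $B_0(r,\textbf{x}(r),u_i(r))$ square-integrable in their respective norms; combined with the measurability hypotheses \Cref{as2com} and \Cref{as4com} and the progressive measurability of $\textbf{x}$ and $u_i$, these integrands are progressively measurable with respect to $\mathcal{F}^{[N],0}$, so the Bochner integral and the two stochastic integrals in \eqref{mildcoupcom} are well-defined and $\mathcal{F}^{[N],0}$-adapted (\Cref{ass-add} guarantees the usual conditions, so the Hilbert-space stochastic integration theory applies). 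For the size bound I would use $\|S(t)\|_{\mathcal{L}(H)}\le M_T$, the Cauchy--Schwarz inequality on the drift, and the Hilbert-space It\^o isometry $\mathbb{E}\big|\int_0^t S(t-r)\Psi(r)\,dW(r)\big|_H^2\le M_T^2\int_0^t\mathbb{E}\|\Psi(r)\|_{\mathcal{L}_2}^2\,dr$ for both the $W_i$- and the $W_0$-convolution, then \Cref{as5com} and the square-integrability of $\xi_i$; summing over $i\in\mathcal{N}$ and taking $\sup_{t\in\mfT}$ yields $\sup_{t\le\tau}\mathbb{E}|\Phi\textbf{x}(t)|_{H^N}^2\le c_1+c_2\int_0^\tau\sup_{s\le r}\mathbb{E}|\textbf{x}(s)|_{H^N}^2\,dr\le c_1+c_2 T\|\textbf{x}\|_{\mathcal{H}^2}^2<\infty$, so $\Phi\textbf{x}\in\mathcal{H}^2_{\mathcal{F}^{[N],0}}(\mfT;H^N)$.

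Next I would establish the contraction estimate. For $\textbf{x},\textbf{y}\in\mathcal{H}^2_{\mathcal{F}^{[N],0}}(\mfT;H^N)$, subtract the mild formulations, apply the Lipschitz bound of \Cref{as5com} (which is uniform in the frozen control argument), the same semigroup bound, Cauchy--Schwarz on the drift difference and the It\^o isometry on the two stochastic-convolution differences, and sum over $i\in\mathcal{N}$, to obtain, for every $\tau\in\mfT$,
\[
\sup_{t\le\tau}\mathbb{E}\big|\Phi\textbf{x}(t)-\Phi\textbf{y}(t)\big|_{H^N}^2\;\le\;K\int_0^\tau\sup_{s\le r}\mathbb{E}\big|\textbf{x}(s)-\textbf{y}(s)\big|_{H^N}^2\,dr,
\]
with $K=K(N,C,M_T,T)$ independent of $\tau$. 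Iterating this inequality $n$ times gives $\|\Phi^n\textbf{x}-\Phi^n\textbf{y}\|_{\mathcal{H}^2}^2\le\frac{(KT)^n}{n!}\|\textbf{x}-\textbf{y}\|_{\mathcal{H}^2}^2$, so for $n$ large enough $\Phi^n$ is a strict contraction on the Banach space $\mathcal{H}^2_{\mathcal{F}^{[N],0}}(\mfT;H^N)$; by the Banach fixed-point theorem $\Phi$ has a unique fixed point, which is precisely the unique mild solution of \eqref{mildcoupcom}, and uniqueness is automatic. (Equivalently, one could equip the space with the Bielecki-type norm $\sup_{t\in\mfT}e^{-\beta t}(\mathbb{E}|\textbf{x}(t)|_{H^N}^2)^{1/2}$ and choose $\beta$ large to obtain a one-step contraction.) Note that no smallness of $T$ is required here, precisely because \Cref{as5com} is a global Lipschitz/linear-growth hypothesis.

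The main technical point — and essentially the only place where the infinite-dimensional, common-noise structure enters — is the simultaneous control of the two stochastic convolutions in the $\mathcal{H}^2$ norm: one must check that $\Psi\mapsto\int_0^{\cdot}S(\cdot-r)\Psi(r)\,dW_i(r)$ (respectively with $W_0$) maps $\mathcal{M}^2_{\mathcal{F}^{[N],0}}(\mfT;\mathcal{L}_2(V_Q,H))$ (respectively $\mathcal{M}^2_{\mathcal{F}^{[N],0}}(\mfT;\mathcal{L}_2(V_{Q_0},H))$) into $\mathcal{H}^2_{\mathcal{F}^{[N],0}}(\mfT;H)$ with the isometry-type bound used above, that the resulting process is adapted to the common-noise-augmented filtration, and that the independence of $\{W_i\}_{i\in\mathcal{N}}$ and $W_0$ lets one add the two convolution contributions without cross terms after taking expectations. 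Because the target norm involves $\sup_t\mathbb{E}|\cdot|^2$ rather than $\mathbb{E}\sup_t|\cdot|^2$, the plain It\^o isometry suffices and no factorization method or maximal inequality is needed; those would only be required if one additionally wanted mean-square path continuity of the solution, which is not part of the claim.
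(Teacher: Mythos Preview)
Your proposal is correct and follows essentially the same approach as the paper: the paper's proof simply refers to \cite[Thm.~3.3]{liu2025hilbert}, which is the standard Banach fixed-point argument on $\mathcal{H}^{2}_{\mathcal{F}^{[N],0}}(\mfT;H^N)$ that you spell out in detail, with the common-noise stochastic convolution handled by the same It\^o-isometry bound as the idiosyncratic one. Your iteration/Bielecki alternative and the observation that only $\sup_t\mathbb{E}|\cdot|^2$ (not $\mathbb{E}\sup_t|\cdot|^2$) is needed are exactly the right level of detail for filling in what the paper leaves implicit.
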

\begin{proof}
     This result is an adaptation of the corresponding result obtained in the absence of common noise, as presented in \cite{liu2025hilbert}. Hence, the theorem can be proved by following the same approach as in the proof of \cite[Thm. 3.3]{liu2025hilbert}.
\end{proof}
\section{Hilbert Space-Valued LQ Mean Field Games with Common Noise}\label{HLMFGcom}
\subsection{$N$-Player Game}\label{cabcom}
In this section, we consider a differential game in Hilbert spaces defined on $ \big(\Omega, \mfF, \mathcal{F}^{[N],0}, \mathbb{P} \big)$. This game involves $N$ asymptotically negligible (minor) agents, whose dynamics are governed by a system of coupled stochastic evolution equations, each of which is given by a linear form of \eqref{mildcoupcom}. Specifically, the dynamics of a representative agent indexed by  $i$, $i\in \mathcal{N}$, are given by 
\begin{align} \label{statecom}
x_i(t) = &S(t)\xi_i + \int_0^t S(t-r)(B u_i(r)+F_{1}x^{(N)}(r)) \, dr + \int_0^t S(t-r) (D x_i(r) +F_{2}x^{(N)}(r)+ \sigma) \, dW_i(r) \notag \\&+ \int_0^t S(t-r) (D_0 x_i(r) + F_{0}x^{(N)}(r)+\sigma_0) \, dW_0(r),
\end{align}
where $x_
i(t)$ and $u_i(t)$ denote, respectively, the state and the control input of agent $i$ at time $t$. The term $x^{(N)}(t):=\frac{1}{N}(\sum_{i\in \mc{N}}x_{i}(t) )$ represents the average state of the 
$N$ agents. Moreover, the operators involved include $B \in \mathcal{L}(U;H)$, $F_1 \in \mathcal{L}(H)$, $D$ and $F_2 \in \mathcal{L}(H; \mathcal{L}_2(V_{Q};H))$, $\sigma$ and $\sigma_0 \in \mathcal{L}(V;H)$, \(D_0\) and \(F_0 \in \mathcal{L}(H; \mathcal{L}_2(V_{Q_0};H))\).\footnote{A slight generalization of the operator spaces is considered, as described in \cite[Sec. 5.2.]{liu2025hilbert}.}

\begin{assumption}\label{init-cond-iid}The initial conditions $\{\xi_i\}_{i \in \mc{N}}$ are i.i.d. with $\mb{E}[\xi_i]=\bar{\xi}$ and $\mb{E} \left |\xi_i \right |^2 < \infty$,  and independent from $W_0$ and $\left\{W_{i} \right\}_{i \in \mc{N}}$.
\end{assumption}

\begin{assumption}(Filtration \& Admissible Control for $N$-Player Game)\label{clcom}
The set of admissible control laws for agent $i$, denoted by $\mc{U}^{[N],0}$, is defined as the collection of control laws $u_i$ that belong to $ \mathcal{M}^2_{\mc{F}^{[N],0}}(\mfT;U)$.    
\end{assumption}
The existence and uniqueness of a mild solution to \eqref{statecom} is guaranteed by \Cref{enucom}. In addition, agent $i$ aims to find a strategy minimizing the cost functional
\begin{equation} \label{cosfinNcom}
J^{[N]}_{i}(u_{i},u_{-i})=\mathbb{E}\int_{0}^{T}\left(\left|M^{\frac{1}{2}}\left (x_{i}(t)-\widehat{F}_{1} x^{(N)}(t)  \right ) \right|^{2}+\left|u_{i}(t) \right|^{2}\right)dt+\mathbb{E}\left|G^{\frac{1}{2}}\left (x_{i}(T)-\widehat{F}_{2} x^{(N)}(T)  \right ) \right|^{2},  
\end{equation}
where $M\in \mathcal{L}(H)$ and $G\in \mathcal{L}(H)$ are positive operators, and $\widehat{F}_1, \widehat{F}_2 \in \mathcal{L}(H)$. 
\subsection{Limiting Game}\label{sec:limiting:common-noise}
In the limiting case, where the number of agents $N$ approaches infinity, the state dynamics and cost functional of a representative agent $i$, $i\in\mathcal N$, are given, respectively, by 
\begin{align} \label{dylimi-S0com}
x_i(t)=&S(t)\xi_i+\int_{0}^{t}S(t-r)(Bu_i(r)+F_1\bar{x}(r))dr+\int_0^t S(t-r) (D x_i(r) + F_2\bar{x}(r)+\sigma) \, dW_i(r)\notag \\ &+ \int_0^t S(t-r) (D_0 x_i(r) + F_{0}\bar{x}(r)+\sigma_0) \, dW_0(r),
\end{align}
and 
\begin{equation} \label{cosfinlimi-S0com}
{J}^\infty_i(u_i)=\mathbb{E}\int_{0}^{T}\left(\left|M^{\frac{1}{2}}\left (x_{i}(t)-\widehat{F}_{1}\bar{x}(t)  \right ) \right|^{2}+\left| u_{i}(t) \right|^{2}\right)dt+\mathbb{E}\left|G^{\frac{1}{2}}\left (x_{i}(T)-\widehat{F}_{2}\bar{x}(T) \right ) \right|^{2},
\end{equation} 
where $\bar{x}(t)$ represents the coupling term in the limit, termed the mean field. We impose the following assumption for the limiting problem.  
\begin{assumption}(Filtration \& Admissible Control for Limiting Game)\label{Info-set-Adm-Cntrl-CN}
  The filtration $\mathcal{F}^{i,0, \infty}$ of agent $i$ is the natural filtration generated by $W_i$, $W_0$, and the initial condition $\xi_i$, and it satisfies the usual conditions. Subsequently, the set of admissible control laws for agent $i$, denoted by $\mc{U}^i$, is defined as the collection of control laws $u_i$ that belong to $ \mathcal{M}^2_{\mc{F}^{i,0,\infty}}(\mfT;U)$.  
\end{assumption}

We proceed with the following steps to address the described limiting MFG and to obtain a Nash equilibrium strategy for it. 

We define the Banach space $C_{\mathcal{F}^0}(\mfT; L^2(\Omega; H))$ as
\begin{equation} 
C_{\mathcal{F}^0}(\mfT; L^2(\Omega; H)) := \left\{ g : \mfT \to L^2(\Omega; H) \,\middle|\, g \text{ is } \mathcal{F}^0-\text{adapted  and } t \mapsto g(t) \text{ is continuous in }L^2(\Omega; H) \right\},  \notag  \end{equation} 
equipped with the standard supremum norm $| g |_{C_{\mathcal{F}^0}(\mathcal{T}; L^2(\Omega; H))} 
:= 
\sup_{t \in \mathcal{T}} 
\big( 
\mathbb{E}| g(t) |_H^2 
\big)^{1/2}$. Moreover, every $g \in C_{\mathcal{F}^0}(\mfT; L^2(\Omega; H))$ has a progressively measurable modification.

First, we treat the interaction term as an input \(g \in C_{\mathcal{F}^0}(\mfT; L^2(\Omega; H))\), and solve the resulting optimal control problem for a representative agent described by the dynamics
\begin{align} \label{dylimicom}
 dx_i(t)=&(Ax_i(t)+Bu_i(t)+F_{1}g(t))dt+(D x_i(t) +F_{2}g(t)+ \sigma) dW_{i}(t)\notag\\&+(D_0 x_i(t) + F_{0}g(t)+\sigma_0) dW_{0}(t) ,\hspace{0.5cm}  x_i(0)=\xi_i,  
\end{align}
and the cost functional 
\begin{equation} \label{cosfinlimicom}
J(u_i)=\mathbb{E}\int_{0}^{T}\left(\left|M^{\frac{1}{2}}\left (x_{i}(t)-\widehat{F}_{1}g(t)  \right ) \right|^{2}+\left| u_{i}(t) \right|^{2}\right)dt+\mathbb{E}\left|G^{\frac{1}{2}}\left (x_{i}(T)-\widehat{F}_{2}g(T)  \right ) \right|^{2}.
\end{equation} 
The solution of the above optimal control problem yields the optimal pair $(x^\circ_i, u^\circ_i)$.

Then, we obtain the resulting mean field, denoted by $y_g \in  C_{\mathcal{F}^0}(\mfT; L^2(\Omega; H))$, satisfying   
\begin{equation} \label{fixpdcom1}
\displaystyle \sup_{t\in \mfT}\mathbb{E}\left|y_g(t)- \frac{1}{N}\sum_{i\in \mc{N}}x^{\circ}_{i}(t) \right|_{H}^{2} \rightarrow 0,
\end{equation} 
where $x_{i}^{\circ}$ represents the optimal state of agent $i$ corresponding to the control problem described by \eqref{dylimicom}-\eqref{cosfinlimicom}.

Finally, we study the consistency condition
\begin{equation}
y_g(t) = g(t), \quad \forall t \in \mfT,\quad  \mathbb{P}-a.s.
\end{equation}
whose fixed point characterizes the mean field $\bar{x}$.
\subsubsection{Optimal Control Problem of a Representative Agent} \label{sinconcom}
Due to the homogeneity of the agents, we drop the index $i$ in this section. Each agent faces a stochastic control problem in Hilbert spaces described by the state evolution equation 
\begin{align} \label{dylimisincom}
x(t) = &S(t)\xi + \int_0^t S(t-r)(B u(r)+F_1g(r)) \, dr + \int_0^t S(t-r) (D x(r) +p(r)) \, dW(r) \notag \\&+ \int_0^t S(t-r) (D_0 x(r) + p_{0}(r)) \, dW_0(r),
\end{align}
where $p(t)=F_2g(t)+ \sigma,\, p_{0}(t)=F_0g(t)+ \sigma_{0} $, and $g \in C_{\mathcal{F}^0}(\mfT; L^2(\Omega; H)) $, and by the cost functional 
\begin{equation} \label{cosfinlimisincom}
J(u)=\mathbb{E}\int_{0}^{T}\left(\left|M^{\frac{1}{2}}\left (x(t)-\widehat{F}_{1}g(t)  \right ) \right|^{2}+\left| u(t) \right|^{2}\right)dt+\mathbb{E}\left|G^{\frac{1}{2}}\left (x(T)-\widehat{F}_{2}g(T)  \right ) \right|^{2}.
\end{equation} 
\begin{remark}\label{refo} We may rewrite \eqref{dylimisincom} as 
\begin{equation} \label{dylimisincomrf}
x(t) = S(t)\xi + \int_0^t S(t-r)(B u(r)+F_1g(r)) \, dr + \int_0^t S(t-r) (\widebar{D} x(r) +\widebar{p}(r)) \, d\widebar{W}(r), 
\end{equation}
where $\widebar{W}$ is a $V^2$-valued $\widebar{Q}$-Wiener process defined by $\widebar{W}=(W, W_0)$, with the covariance operator $\widebar{Q}=(Q,Q_0)$. The operator $\widebar{D} \in \mathcal{L}(H;\mathcal{L}_2(V_{Q}\times V_{Q_0}; H)) $ is defined as 
\begin{equation} 
(\widebar{D}x)(v_1,v_2)= (Dx)v_1+(D_0x)v_2, \quad\forall x \in H \,\text{and}\,\,\, \forall v_1,v_2 \in V.
\end{equation}
Obviously, we have
\begin{equation}
\left\|\widebar{D} \right\|^2_{\mathcal{L}(H;\mathcal{L}_2(V_{Q}\times V_{Q_0}; H))} \leq  \left\|D \right\|^2_{\mathcal{L}(H;\mathcal{L}_2(V_{Q}; H))}+\left\|D_0 \right\|^2_{\mathcal{L}(H;\mathcal{L}_2(V_{Q_0}; H))}.    
\end{equation}
Similarly, we define $\widebar p(t)\in \mathcal{L}_2(V_{Q}\times V_{Q_0};H)$ as 
\begin{equation}
(\widebar{p}(t))(v_1,v_2)=p(t)v_1+p_0(t)v_2.  \end{equation}
\end{remark}
The next theorem characterizes the optimal control law for the problem described by \eqref{dylimisincom} and \eqref{cosfinlimisincom}.

\begin{theorem}[Optimal Control Law]\label{thm:opt-cntrl-limiting-common-noise} Suppose that \Cref{init-cond-iid} and \Cref{Info-set-Adm-Cntrl-CN} hold. The optimal control law $u^{\circ}$ for the Hilbert space-valued stochastic control problem described by \eqref{dylimisincom}-\eqref{cosfinlimisincom} is given by 
\begin{equation} \label{opticoncom}
u^{\circ}(t) = -B^{\ast}\left(\Pi(t) x(t) -q(t) \right),
\end{equation}
where $\Pi \in C_s(\mfT; \mathcal{L}(H))$, with $\Pi(t)$ being a positive operator for all $t \in \mfT $, satisfies the operator differential Riccati equation 
\begin{equation} \label{riccacom}
 \frac{d}{dt}\left<\Pi(t)x, x\right> = -2\left<\Pi(t)x, Ax\right> +\left<\Pi(t)BB^{\ast}\Pi(t)x, x\right> -\left<D^{\ast}\Pi(t)Dx, x\right>-\left<D^{\ast}_{0}\Pi(t)D_0x, x\right>  - \left<Mx, x\right>,  \forall x \in \mathcal{D}(A),  
\end{equation}
and the pair $(q,\widetilde{q}) \in \mathcal{M}^{2}_{\mathcal{F}^{0}}(\mfT;H) \times \mathcal{M}_{\mathcal{F}^{0}}^2(\mfT;\mathcal{L}_2(V_{Q_0};H))$, satisfies the backward linear stochastic evolution equation
\begin{align} \label{qstcom}
q(t)=&S^{*}(T-t)G\widehat{F}_{2}g(T)-\int_{t}^{T}S^{*}(r-t)\Big(\Pi(r)BB^{\ast}q(r)-M\widehat{F}_{1}g(r)+D^{\ast}\Pi(r)p(r)\\ \notag &+D^{\ast}_{0} (\Pi(r)p_{0}(r)-\widetilde{q}(r))+\Pi(r)F_1g(r)\Big)dr-\int_{t}^{T}S^{*}(r-t)\widetilde{q}(r)dW_0(r).
\end{align}
\end{theorem}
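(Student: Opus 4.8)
The plan is to prove the theorem by the classical completion-of-squares (verification) method, adapted to the infinite-dimensional stochastic LQ setting with common noise. Since the dynamics \eqref{dylimisincom} are affine in $(x,u)$ and the cost \eqref{cosfinlimisincom} is quadratic, it is natural to look for a value function of the form $V(t,x)=\langle\Pi(t)x,x\rangle-2\langle q(t),x\rangle+\rho(t)$, where $\Pi$ is the mild solution of the operator Riccati equation \eqref{riccacom} with terminal condition $\Pi(T)=G$, the pair $(q,\widetilde q)$ is the mild solution of the backward stochastic evolution equation \eqref{qstcom}, and $\rho$ is a scalar correction (with a martingale part) chosen to absorb the remaining, control-independent, terms. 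The first task is therefore to establish well-posedness of the two auxiliary equations. For \eqref{riccacom}, existence and uniqueness of a solution $\Pi\in C_s(\mfT;\mathcal{L}(H))$ with $\Pi(t)$ self-adjoint and positive follows from the standard theory of operator Riccati equations associated with $C_0$-semigroups, exactly as in \cite{liu2025hilbert}; the additional term $-\langle D_0^{*}\Pi(t)D_0x,x\rangle$ coming from the common noise is estimated precisely as the idiosyncratic term $-\langle D^{*}\Pi(t)Dx,x\rangle$. For \eqref{qstcom}, which is driven only by $W_0$ (consistent with its $\mathcal{F}^{0}$-adaptedness, since the input $g$ is $\mathcal{F}^{0}$-adapted), existence and uniqueness of a mild solution $(q,\widetilde q)\in\mathcal{M}^{2}_{\mathcal{F}^{0}}(\mfT;H)\times\mathcal{M}^{2}_{\mathcal{F}^{0}}(\mfT;\mathcal{L}_2(V_{Q_0};H))$ follow from a contraction-mapping argument in that product space combined with the martingale representation theorem for $\mathcal{F}^{0}$-martingales, using $g\in C_{\mathcal{F}^0}(\mfT;L^2(\Omega;H))$, the boundedness and continuity of $\Pi$, and a priori estimates of the type used in \Cref{cabscom}.

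For the verification step, fix an arbitrary admissible control $u\in\mathcal{U}^i$ and let $x$ be the corresponding mild solution of \eqref{dylimisincom}; using the reformulation of \Cref{refo} it is convenient to view the forward dynamics as driven by the single $\widebar Q$-Wiener process $\widebar W=(W,W_0)$. I would apply It\^o's formula to $t\mapsto\langle\Pi(t)x(t),x(t)\rangle-2\langle q(t),x(t)\rangle$. Because only mild (not strong) solutions are available, this must be made rigorous through a Yosida approximation: replace $A$ by $A_n=nA(nI-A)^{-1}$, work with the corresponding strong solutions $x_n$ (and strongly differentiable $\Pi$, $q$), apply the classical It\^o formula there, and pass to the limit $n\to\infty$ using $A_nx\to Ax$ on $\mathcal{D}(A)$, the uniform bound $\|S(t)\|_{\mathcal{L}(H)}\le M_T$, the continuity of $\Pi$, and the square-integrability of $(q,\widetilde q)$. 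After taking expectations, the $dW$- and $dW_0$-martingale terms vanish, and the algebra collapses: the drift generated by the Riccati equation cancels the $\langle Mx,x\rangle$ contribution together with the quadratic-in-$x$ diffusion terms $\langle D^{*}\Pi Dx,x\rangle$ and $\langle D_0^{*}\Pi D_0x,x\rangle$; the terms $D^{*}\Pi(r)p(r)$ and $D_0^{*}(\Pi(r)p_0(r)-\widetilde q(r))$ in the drift of \eqref{qstcom} are precisely what is needed to cancel the cross-variation terms between the idiosyncratic- and common-noise diffusions of the state and those of $q$ (the $-\widetilde q$ part handling the cross-variation of $q$ with itself along $W_0$); and $\rho$ absorbs what is left. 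One is left with
\begin{equation*}
J(u)=\mathbb{E}\big[\langle\Pi(0)\xi,\xi\rangle-2\langle q(0),\xi\rangle+\rho(0)\big]+\mathbb{E}\int_{0}^{T}\big|u(t)+B^{*}\big(\Pi(t)x(t)-q(t)\big)\big|^{2}\,dt.
\end{equation*}
Since the first term does not depend on $u$, the functional is minimized by the unique choice $u^{\circ}(t)=-B^{*}\big(\Pi(t)x(t)-q(t)\big)$, which is exactly \eqref{opticoncom}, and the minimal value is $\mathbb{E}[V(0,\xi)]$.

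Finally, one must check that $u^{\circ}$ is admissible. Inserting $u^{\circ}$ into \eqref{dylimisincom} yields a closed-loop linear stochastic evolution equation whose drift and diffusion coefficients still satisfy the Lipschitz and linear-growth conditions used in \Cref{cabscom} (the map $x\mapsto -BB^{*}\Pi(t)x$ is Lipschitz uniformly in $t$, and $q$ enters as an $\mathcal{F}^{0}$-adapted square-integrable additive input), so it has a unique mild solution $x^{\circ}\in\mathcal{H}^{2}_{\mathcal{F}^{i,0,\infty}}(\mfT;H)$; then $u^{\circ}=-B^{*}(\Pi x^{\circ}-q)$ is $\mathcal{F}^{i,0,\infty}$-progressively measurable (note $q$ is $\mathcal{F}^{0}$-adapted, hence also $\mathcal{F}^{i,0,\infty}$-adapted) and square-integrable, so $u^{\circ}\in\mathcal{U}^i$, and the verification identity applies to it with the squared term vanishing. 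I expect the main obstacle to be the rigorous It\^o formula for the mild solution and the passage to the limit in the Yosida approximation — keeping careful track of the cross-variation bookkeeping between the idiosyncratic and common Wiener processes — while the well-posedness of the backward equation \eqref{qstcom} in the mild sense is the second, more routine, technical point.
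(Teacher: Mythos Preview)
Your proposal is correct and follows essentially the same completion-of-squares verification approach as the paper: establish well-posedness of \eqref{riccacom} and \eqref{qstcom} via the cited literature, apply It\^o's formula to $\langle\Pi(t)x(t),x(t)\rangle-2\langle q(t),x(t)\rangle$ through a Yosida-type approximation, take expectations, and read off the optimal control from the completed square. The one technical refinement worth noting is that the paper treats the two summands with \emph{different} state approximations --- using the resolvent $J_n=n(nI-A)^{-1}$ (so that the approximants lie in $\mathcal{D}(A)$, as required by the weak Riccati equation \eqref{riccacom}) for the quadratic term $\langle\Pi x_{1,n},x_{1,n}\rangle$, and the Yosida approximation $A_n$ together with an approximating BSEE $(q_n,\widetilde q_n)$ for the cross term $\langle q_n,x_{2,n}\rangle$ --- because $\Pi$ is deterministic and only weakly differentiable whereas $q$ is a mild BSEE solution; your single-approximation sketch would need to be sharpened along these lines to make the It\^o step fully rigorous.
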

\begin{proof}
We first note that the presence of a common $Q_0$-Wiener process does not affect the operator differential Riccati equation, which therefore remains deterministic. Specifically, \eqref{riccacom} is the backward form of \cite[Eq.~4.21]{liu2025hilbert} with $E=0$. Hence, the arguments for existence and uniqueness presented in the proof of \cite[Thm.~4.3]{liu2025hilbert} apply directly to \eqref{riccacom}. Moreover, the solution to \eqref{qstcom} is a pair $(q,\widetilde{q}) \in \mathcal{M}^{2}_{\mathcal{F}^{0}}(\mfT;H) \times \mathcal{M}_{\mathcal{F}^{0}}^2(\mfT;\mathcal{L}_2(V_{Q_0};H))$.
The existence and uniqueness of such a solution follow from standard results (see, e.g., \cite{hu1991adapted,guatteri2005backward}). Then, following a standard methodology for characterizing the solution to an LQ optimal control, we first aim to apply Itô's lemma to the expression 
\begin{equation}
\langle \Pi(t)x(t), x(t) \rangle - 2 \langle q(t), x(t) \rangle.\label{step1} 
\end{equation}
However, since Itô’s lemma is applicable only to strong solutions of stochastic evolution equations, we instead use the approximating sequence given by 
\begin{equation}
\langle \Pi(t)x_{1,n}(t), x_{1,n}(t) \rangle - 2 \langle q_n(t), x_{2,n}(t) \rangle, 
\end{equation}
 where $x_{1,n}$ and $x_{2,n}$ denote two distinct approximations of the state process $x$ given by 
\begin{align} \label{stateapp1}
x_{1,n}(t) = &\xi + \int_0^t (Ax_{2,n}(r)+J_{n}(B u(r)+F_1g(r))) \, dr + \int_0^t J_{n} (D x_{1,n}(r) +p(r)) \, dW(r) \notag \\
&+ \int_0^t J_{n}(D_0 x_{1,n}(r) + p_{0}(r)) \, dW_0(r),\\
x_{2,n}(t) = &\xi + \int_0^t (A_nx_{2,n}(r)+(B u(r)+F_1 g(r))) \, dr + \int_0^t (D x_{1,n}(r) +p(r)) \, dW(r) \notag \\
&+ \int_0^t (D_0 x_{1,n}(r) + p_{0}(r)) \, dW_0(r),
\end{align}
which are associated with the resolvent operator $J_{n}= n(nI-A)^{-1}$ of $A$ and the Yosida approximation $A_{n}=An(nI-A)^{-1}$ of $A$, respectively. We note that the Riccati equation is deterministic, whereas the offset equation is stochastic. 
Therefore, as can be seen, we treat the corresponding expressions in \eqref{step1} differently.

First, we apply Itô’s lemma to $\langle \Pi(t)x_{1,n}(t), x_{1,n}(t) \rangle$, perform mathematical manipulations, and use the convergence properties of the approximating sequence to derive  
\begin{align} \label{costpi}
&\mathbb{E}\left<\Pi(T)x(T),x(T) \right>=\mathbb{E}\left<\Pi(0)\xi,\xi \right>+\mathbb{E} \int_{0}^{T}\Big[-\left<Mx(t),x(t) \right>+2\left<\Pi(t)x(t),Bu(t)+F_1g(t) \right>\notag \\&+\left<\Pi(t)BB^{\ast }\Pi(t)x(t),x(t) \right>+2\left<D^{\ast}\Pi(t)p(t)+D^{\ast}_{0}\Pi(r)p_{0}(t), x(t)\right>\Big ]dt\notag \\&+\mathbb{E}\int_{0}^{T}\left[\mathrm{tr} \left(\Pi(t)\left( p(t) Q^{1/2} \right)  \left( p(t) Q^{1/2} \right)^{\ast} \right)+\mathrm{tr} \left(\Pi(t)\left( p_0(t) Q_0^{1/2} \right)  \left( p_0(t) Q_0^{1/2} \right)^{\ast} \right)\right]dt.
\end{align} 
Next, we introduce an approximation sequence for \eqref{qstcom} which is a sequence of strong solutions $(q_n(t),\widetilde{q}_n(t))$ such that 
\begin{align} \label{qnstcom}
 q_n(t)=&G\widehat{F}_{2}g(T)+\int_{t}^{T}\big(A_n^{\ast}q_n(r)-\Pi(r)BB^{\ast}q_n(r)+M\widehat{F}_{1}g(r)-D^{\ast}\Pi(r)p(r)\\ \notag &-D^{\ast}_{0} (\Pi(r)p_{0}(r)-\widetilde{q}_{n}(r))-\Pi(r)F_1g(r)\big)dr-\int_{t}^{T}\widetilde{q}_n(r)dW_0(r),
\end{align}
where $A^{\ast}_n=A^{\ast} n(nI-A^{\ast})^{-1}$. From \cite[Theorem 4.4]{guatteri2005backward}, we have the convergence property
\begin{equation} \label{convbsde}
\lim_{n \to \infty }  \sup_{0\leq t\leq T} \mathbb{E}\left|q_{n}(t)-q(t) \right|^{2}=0,\qquad \displaystyle \lim_{n \to \infty }\mathbb{E}\int_{0}^{T}\left\|\widetilde{q}_n(t)-\widetilde{q}(t) \right\|^2 dt=0. 
\end{equation}
Similarly, for \eqref{dylimicom} we have the approximation sequence given by
\begin{align} \label{stateapp}
x_{2,n}(t) = \xi + \int_0^t (A_{n}x_{2,n}(r)+B u(r)+F_1 g(r)) \, dr + \int_0^t  (D x_{2,n}(r) +p(r)) \, dW(r) \notag + \int_0^t (D_0 x_{2,n}(r) + p_{0}(r)) \, dW_0(r),
\end{align}
with
\begin{equation}
 \lim_{n \to \infty }  \sup_{0\leq t\leq T} \mathbb{E}\left|x_{2,n}(t)-x(t) \right|^{2}=0, 
\end{equation}
where the alternative representation given by \eqref{dylimisincomrf} enables the application of existing results for stochastic evolution equations with only one infinite-dimensional noise. Then, we apply Itô's formula to the process \( \langle q_n(t), x_{2,n}(t) \rangle \) over the interval \(\mfT\), as in \cite{guatteri2005backward, brzezniak2008ito}, and take the expectation of both sides to obtain
\begin{align} \label{costq}
 \mathbb{E}\left [ \left<q(T),x(T) \right> \right ]=&\mathbb{E}\left< q(0),\xi \right>+\mathbb{E} \int_{0}^{T}\Big[\Big<x(t),\Pi(t)BB^{\ast}q(t)-M\widehat{F}_{1}g(r)+D^{\ast}\Pi(t)p(t)+D^{\ast}_{0}\Pi(t)p_{0}(t)\notag\\  &+\Pi(t)F_1g(t) \Big> +  \left<q(t),Bu(t)+F_1g(t) \right>+\mathrm{tr} \left(\left( \widetilde{q}(t) Q_0^{1/2} \right)  \left( p_0(t) Q_0^{1/2} \right)^{\ast} \right)\Big]dt.
\end{align}
From \eqref{costpi} and \eqref{costq}, we obtain an expression for $ \mathbb{E} \left [ \left<\Pi(t)x(t),x(t) \right>-2\left<q(t),x(t) \right> \right ]$ which yields
\begin{align} \label{costop}
 J(u) =& \mathbb{E} \left<\Pi(0)\xi,\xi \right> - 2\mathbb{E}\left<q(0),\xi \right> + 2\mathbb{E}\left<G\widehat{F}_{2}g(T), \widehat{F}_{2}g(T) \right>+ \mathbb{E}\Bigg[\int_{0}^{T}\Big|u(t)+B^* \Pi(t)x(t)-B^* q(t) \Big|^{2}dt\Bigg]
 \notag \\ & +\mathbb{E}\int_{0}^{T}\bigg[\left<MF_{1}g(t),F_{1}g(t) \right>-\left |B^{\ast}q(t) \right |^{2}-2\left<q(t),F_1 g(t) \right>-2\mathrm{tr} \left(\left( \widetilde{q}(t) Q_0^{1/2} \right)  \left( p_0(t) Q_0^{1/2} \right)^{\ast} \right)\notag \\ &\quad\quad+\mathrm{tr} \left(\Pi(t)\left( p(t) Q^{1/2} \right)  \left( p(t) Q^{1/2} \right)^{\ast} \right) +\mathrm{tr} \left(\Pi(t)\left( p_0(t) Q_0^{1/2} \right)  \left( p_0(t) Q_0^{1/2} \right)^{\ast} \right)\notag \bigg]dt.
\end{align}
Finally, from the above equation, we derive the optimal feedback control as given by \eqref{opticoncom}.
\end{proof}

\begin{remark}
We note that \eqref{qstcom} may be understood as the mild solution of the backward stochastic evolution equation in Hilbert space given by
\begin{align}
dq(t)=&-\Big(\left(A^{\ast}-\Pi(t)BB^{\ast}\right)q(t)-M\widehat{F}_{1}g(t)+D^{\ast}\Pi(t)p(t)+D^{\ast}_{0} \left(\Pi(t)p_{0}(t)-\widetilde{q}(t)\right)+\Pi(t)F_1g(t)\Big)dt\notag \\ & +\widetilde{q}(t)dW_0(t).
\end{align}
\end{remark}

\subsubsection{Resulting Mean Field Equation}
We aim to characterize the  resulting mean field, denoted by $y_g\in C_{\mathcal{F}^0}(\mfT; L^2(\Omega; H))$, when the representative agent uses the best strategy, given by \eqref{opticoncom}, in response to the fixed mean field \(g \in C_{\mathcal{F}^0}(\mfT; L^2(\Omega; H))\). 

From \Cref{thm:opt-cntrl-limiting-common-noise}, the optimal state $x^\circ(t)$ for the representative agent in the limiting case is given by
\begin{align} \label{eqstate}
x^\circ(t)=&S(t)\xi-\int_{0}^{t}S(t-r)(BB^{\ast}\Pi(r)x^\circ(r)-BB^{\ast}q(r)-F_1g(r))dr\notag \\ &+\int_0^t S(t-r) (D x^\circ(r) + F_2g(r)+\sigma) \, dW(r)+ \int_0^t S(t-r) (D_0 x^\circ(r) + F_{0}g(r)+\sigma_0) \, dW_0(r),
\end{align}
where the process $q$ satisfies \eqref{qstcom}. First, we show that $x^\circ(t)$ is uniformly bounded, and then we use it to derive the mean field equation. Throughout the rest of the paper, the constant \( C \) may vary from one instance to another.
\begin{proposition} \label{prior}
For the optimal state of a representative agent, satisfying \eqref{eqstate}, we have
\begin{equation} \label{boundxi}
 \mathbb{E}\left |x^{\circ}(t) \right | ^2 \leq C,\quad \forall t \in \mfT,
\end{equation}
where the constant $C$ only depends on the model parameters.
\end{proposition}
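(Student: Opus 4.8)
The plan is to convert the mild formulation \eqref{eqstate} into a closed Gr\"onwall-type integral inequality for the scalar map $t\mapsto\mathbb{E}|x^{\circ}(t)|_H^2$ on $\mfT$. The observation that makes this work is that, by \eqref{qstcom}, the pair $(q,\widetilde{q})$ depends only on the input $g$ and the model data and \emph{not} on $x^{\circ}$; hence $q$ enters \eqref{eqstate} as a known inhomogeneous term and no forward-backward coupling has to be resolved at this stage.

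First I would record the auxiliary a priori bounds. Since $\Pi\in C_s(\mfT;\mathcal{L}(H))$, the number $\|\Pi\|_\infty:=\sup_{t\in\mfT}\|\Pi(t)\|_{\mathcal{L}(H)}$ is finite and controlled by the data through \eqref{riccacom}; by \Cref{init-cond-iid}, $\mathbb{E}|\xi|_H^2<\infty$; and since $g\in C_{\mathcal{F}^0}(\mfT;L^2(\Omega;H))$, the number $\|g\|_\ast^2:=\sup_{t\in\mfT}\mathbb{E}|g(t)|_H^2$ is finite. Applying the standard a priori estimate for mild solutions of the linear backward stochastic evolution equation \eqref{qstcom} (see, e.g., \cite{hu1991adapted,guatteri2005backward}), together with $p=F_2g+\sigma$, $p_0=F_0g+\sigma_0$ and the boundedness of $B,F_1,F_2,F_0,M,G,\widehat{F}_1,\widehat{F}_2,\sigma,\sigma_0$ and $\Pi$, gives $\sup_{t\in\mfT}\mathbb{E}|q(t)|_H^2+\mathbb{E}\int_0^T\|\widetilde{q}(r)\|^2\,dr\le C(1+\|g\|_\ast^2)$ with $C$ depending only on the data.

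Next I would estimate the four terms on the right of \eqref{eqstate} in $L^2(\Omega;H)$. Using $|a_1+\dots+a_4|_H^2\le 4\sum_k|a_k|_H^2$ and $\|S(\cdot)\|_{\mathcal{L}(H)}\le M_T$: the first term contributes $M_T^2\,\mathbb{E}|\xi|_H^2$; the Bochner drift term, by the Cauchy--Schwarz inequality in the time variable, is at most $TM_T^2\int_0^t\mathbb{E}|BB^{\ast}\Pi(r)x^{\circ}(r)-BB^{\ast}q(r)-F_1g(r)|_H^2\,dr$, bounded in turn by a data-dependent constant times $\int_0^t\mathbb{E}|x^{\circ}(r)|_H^2\,dr$ plus a constant times $\int_0^t(\mathbb{E}|q(r)|_H^2+\mathbb{E}|g(r)|_H^2)\,dr$; and each of the two stochastic convolutions, by the It\^o isometry for Hilbert-space-valued stochastic integrals, equals $\mathbb{E}\int_0^t\|S(t-r)(\,\cdot\,)\|_{\mathcal{L}_2}^2\,dr\le M_T^2\,\mathbb{E}\int_0^t\|Dx^{\circ}(r)+F_2g(r)+\sigma\|_{\mathcal{L}_2}^2\,dr$ (and analogously with $D_0,F_0,\sigma_0,W_0$), which is at most a constant times $\int_0^t\mathbb{E}|x^{\circ}(r)|_H^2\,dr$ plus a constant times $\int_0^t(1+\mathbb{E}|g(r)|_H^2)\,dr$. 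Collecting these and inserting the bound on $q$ from the previous step gives
\[
\mathbb{E}|x^{\circ}(t)|_H^2\;\le\;C_1\;+\;C_2\int_0^t\mathbb{E}|x^{\circ}(r)|_H^2\,dr,\qquad t\in\mfT,
\]
where $C_2$ depends only on $M_T,T,\|B\|,\|\Pi\|_\infty,\|D\|,\|D_0\|$, and $C_1$ additionally on $\mathbb{E}|\xi|_H^2$, $\sigma,\sigma_0$ and $\|g\|_\ast^2$. Gr\"onwall's lemma then yields $\mathbb{E}|x^{\circ}(t)|_H^2\le C_1 e^{C_2 T}=:C$, uniformly on $\mfT$.

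The points needing care are: justifying the It\^o isometry for the stochastic convolutions, which presupposes that the integrands lie in the appropriate $\mathcal{M}^2$-spaces, and this follows from the regularity of $x^{\circ}$ (\Cref{enucom}), of $q$, and of $g$; and the bookkeeping that makes the final constant depend \emph{only} on the model parameters, which requires absorbing the term $\|g\|_\ast^2$ in $C_1$ by using that, at the stage where this estimate is applied, $g$ itself obeys a bound of the same type (e.g. $\mathbb{E}|g(t)|_H^2\le\mathbb{E}|x^{\circ}(t)|_H^2$ once the consistency relation $y_g=g$ is imposed, or $g$ ranging over a fixed bounded subset of $C_{\mathcal{F}^0}(\mfT;L^2(\Omega;H))$). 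Apart from this absorption step the argument is a routine $L^2$ energy estimate, so I expect the bookkeeping for the $g$-uniformity to be the only genuinely delicate part.
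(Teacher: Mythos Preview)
Your proposal is correct and follows essentially the same route as the paper: bound the four pieces of the mild formulation \eqref{eqstate} in $L^2(\Omega;H)$ using the semigroup bound, Cauchy--Schwarz on the drift, and the It\^o isometry on the stochastic convolutions, then close with Gr\"onwall. Your worry about the $g$-dependence of the constant is a bit over-cautious: in the paper's statement ``model parameters'' is meant to include the fixed input $g\in C_{\mathcal{F}^0}(\mfT;L^2(\Omega;H))$ (and hence $\|g\|_\ast$ and the induced bound on $q$), so no additional absorption step is needed.
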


\begin{proof}
From \eqref{eqstate}, we obtain
\begin{align} \label{boundxieq}
\mathbb{E}\left |x^{\circ}(t) \right | ^2 \leq &   C\Big(\mathbb{E}\left |\xi \right | ^2+ \mathbb{E}\int_{0}^{t}\left |BB^{\ast}\Pi(r)x^{\circ}(r)-BB^{\ast}q(r)-F_1g(r) \right |^{2}dr \notag \allowdisplaybreaks\\ &+\mathbb{E}\int_{0}^{t}\left |D x^{\circ}(r) + F_2g(r)+\sigma \right |^{2}dr+\mathbb{E}\int_{0}^{t}\left |D_0 x^{\circ}(r) + F_{0}g(r)+\sigma_0 \right |^{2}dr \Big) \notag \allowdisplaybreaks\\  \leq & C\Big(\mathbb{E}\left |\xi \right | ^2+ (\left\|B \right\|^{2}+\left\|D \right\|^{2}+\left\|D_0 \right\|^{2}) \mathbb{E}\int_{0}^{t} \left | x^{\circ}(r)\right |^2dr+\left\|B \right\|^{2}\mathbb{E}\int_{0}^{t}\left | q(r)\right |^2dr \notag \allowdisplaybreaks\\ &+(\left\|F_0 \right\|^{2}+\left\|F_1 \right\|^{2}+\left\|F_2\right\|^{2})\mathbb{E}\int_{0}^{t}\left | g(r)\right |^2 dr+\mathbb{E}\int_{0}^{t}(\left\|\sigma \right\|^{2}+\left\|\sigma_0 \right\|^{2})dr \Big)
\notag \allowdisplaybreaks\\  \leq & C(1+ \mathbb{E}\int_{0}^{t}\left | x^{\circ}(r)\right |^2dr).
\end{align}
By applying Grönwall's inequality to the above inequality, we obtain \eqref{boundxi}.
\end{proof}
\begin{proposition} \label{MF-eq}
Suppose that \Cref{init-cond-iid} holds. For a fixed \( g \in C_{\mathcal{F}^0} (\mfT; L^2(\Omega; H))\), we have 
\begin{equation} \label{convergence1}
\lim_{N \rightarrow \infty}\sup_{t \in \mfT} \mathbb{E} \left| y_g(t) - \frac{1}{N}\sum_{i\in \mc{N}}x^{\circ}_{i}(t) \right|_H^2 = 0, 
\end{equation}
where $x_i^\circ(t)$ satisfies
\begin{align} 
x^\circ_i(t)=&S(t)\xi-\int_{0}^{t}S(t-r)(BB^{\ast}\Pi(r)x^\circ_i(r)-BB^{\ast}q(r)-F_1g(r))dr\notag \\ &+\int_0^t S(t-r) (D x^\circ_i(r) + F_2g(r)+\sigma) \, dW_i(r)+ \int_0^t S(t-r) (D_0 x^\circ_i(r) + F_{0}g(r)+\sigma_0) \, dW_0(r),
\end{align}
and $y_g(t)$ is given by 
\begin{align}\label{yq}
y_g(t)=\,& S(t)\bar{\xi} - \int_0^t S(t - r)\left( BB^{\ast}\Pi(r)y_g(r) - BB^{\ast}q(r) - F_1 g(r) \right) dr \notag \\
&+ \int_0^t S(t - r)\left( D_0 y_g(r) + F_0 g(r) + \sigma_0 \right) dW_0(r).
\end{align}

\end{proposition}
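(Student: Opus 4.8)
The plan is to pass to the limit directly in the empirical average $\bar{x}^{(N)}(t):=\frac1N\sum_{i\in\mathcal{N}}x^{\circ}_i(t)$ and then close a Grönwall estimate. First I would record three preliminaries: equation \eqref{yq} is a linear stochastic evolution equation driven solely by $W_0$, with bounded coefficients $BB^{\ast}\Pi,\,D_0$ and $\mathcal{F}^0$-adapted inputs $BB^{\ast}q,\,F_1g,\,F_0g,\,\sigma_0$, hence it has a unique mild solution $y_g\in C_{\mathcal{F}^0}(\mfT;L^2(\Omega;H))$ by the contraction argument underlying \Cref{enucom}; by homogeneity, \Cref{prior} gives $\sup_{t\in\mfT}\mathbb{E}|x^{\circ}_i(t)|_H^2\le C$ uniformly in $i$; and each $x^{\circ}_i$ is $\mathcal{F}^{[N],0}$-adapted, whereas $q$, $g$ are $\mathcal{F}^0$-adapted and $\Pi$ is deterministic, so $q,g,\Pi$ are common to all agents.

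Averaging the $N$ mild equations for the optimal states and using linearity of the $BB^{\ast}\Pi$ and $D_0$ terms, I obtain
\begin{align*}
\bar{x}^{(N)}(t)=&\,S(t)\Big(\tfrac1N\sum_{i\in\mathcal{N}}\xi_i\Big)-\int_0^tS(t-r)\big(BB^{\ast}\Pi(r)\bar{x}^{(N)}(r)-BB^{\ast}q(r)-F_1g(r)\big)\,dr\\
&+\int_0^tS(t-r)\big(D_0\bar{x}^{(N)}(r)+F_0g(r)+\sigma_0\big)\,dW_0(r)+R_N(t),
\end{align*}
where $R_N(t):=\frac1N\sum_{i\in\mathcal{N}}\int_0^tS(t-r)\big(Dx^{\circ}_i(r)+F_2g(r)+\sigma\big)\,dW_i(r)$ collects the averaged idiosyncratic diffusion. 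Note that the $D$-term and the idiosyncratic noise survive only inside $R_N$, while the $D_0$-term stays coupled to $\bar{x}^{(N)}$; this is precisely why \eqref{yq} is driven by $W_0$ alone with initial datum $\bar\xi$.

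The main obstacle is to show $\sup_{t\in\mfT}\mathbb{E}|R_N(t)|_H^2\le C/N$. The subtlety is that the integrands $Dx^{\circ}_i+F_2g+\sigma$ are \emph{not} independent across $i$, since they are coupled through the common noise $W_0$; the resolution is that they are nonetheless all adapted to the single filtration $\mathcal{F}^{[N],0}$, with respect to which the $\{W_i\}_{i\in\mathcal{N}}$ are mutually orthogonal $Q$-Wiener processes -- equivalently, conditionally on $W_0$ the pairs $(\xi_i,W_i)$, and hence the processes $x^{\circ}_i$, are i.i.d. Therefore the $H$-valued stochastic integrals $\int_0^tS(t-r)(\cdots)\,dW_i(r)$, $i\in\mathcal{N}$, are pairwise orthogonal in $L^2(\Omega;H)$, so that
\[\mathbb{E}|R_N(t)|_H^2=\frac1{N^2}\sum_{i\in\mathcal{N}}\mathbb{E}\Big|\int_0^tS(t-r)\big(Dx^{\circ}_i(r)+F_2g(r)+\sigma\big)\,dW_i(r)\Big|_H^2,\]
and applying the Itô isometry, $\|S(\cdot)\|_{\mathcal{L}(H)}\le M_T$, the embedding $\mathcal{L}(V;H)\hookrightarrow\mathcal{L}_2(V_Q;H)$, boundedness of $D,F_2,\sigma$, $\sup_r\mathbb{E}|g(r)|_H^2<\infty$, and \Cref{prior}, each summand is bounded independently of $i,N,t$, which yields the claim.

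Finally I would subtract \eqref{yq} from the averaged equation. With $e_N:=\bar{x}^{(N)}-y_g$, the $BB^{\ast}q$, $F_1g$, $F_0g$, $\sigma_0$ terms cancel and $e_N$ solves the closed linear mild equation $e_N(t)=S(t)\big(\frac1N\sum_{i\in\mathcal{N}}\xi_i-\bar\xi\big)-\int_0^tS(t-r)BB^{\ast}\Pi(r)e_N(r)\,dr+\int_0^tS(t-r)D_0e_N(r)\,dW_0(r)+R_N(t)$. Using $\|S(\cdot)\|_{\mathcal{L}(H)}\le M_T$, Cauchy--Schwarz in time for the Bochner integral, the Itô isometry for the $W_0$-stochastic convolution, boundedness of $BB^{\ast}\Pi$ and $D_0$, the elementary identity $\mathbb{E}\big|\frac1N\sum_{i\in\mathcal{N}}\xi_i-\bar\xi\big|_H^2=\frac1N\mathbb{E}|\xi_1-\bar\xi|_H^2\le C/N$ (i.i.d.\ with finite second moment, \Cref{init-cond-iid}), and the bound on $R_N$ just established, one gets $\mathbb{E}|e_N(t)|_H^2\le C\big(\frac1N+\int_0^t\mathbb{E}|e_N(r)|_H^2\,dr\big)$ for all $t\in\mfT$, with $C$ depending only on $T$ and the model parameters; Grönwall's inequality then gives $\sup_{t\in\mfT}\mathbb{E}|e_N(t)|_H^2\le\frac{C}{N}e^{CT}\to0$ as $N\to\infty$, which is exactly \eqref{convergence1}.
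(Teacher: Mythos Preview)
Your proposal is correct and follows essentially the same route as the paper: the paper defers the proof of \Cref{MF-eq} to \Cref{conatmcom}, whose argument is exactly the one you give---average the mild equations, isolate the idiosyncratic-noise term, use orthogonality of the stochastic integrals against the independent $W_i$ to get the $C/N$ bound, handle the i.i.d.\ initial data, subtract, and close with Gr\"onwall. You even recover the same $C/N$ rate.
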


\begin{proof}
We omit the proof of \eqref{convergence1}, as it is a special case of \Cref{conatmcom}, which is proved later in the paper and relies on the results of \Cref{prior}.
\end{proof}
\begin{remark} An alternative way to derive the mean field equation is to verify that the well-established results (as used, for instance, in \cite[Lemma~5.1]{carmona2016probabilistic}) also hold in Hilbert spaces, with a few necessary and elementary modifications. Subsequently, by taking conditional expectation $\mathbb{E}[\,\cdot\, | \mathcal{F}^{0}_t]$ on both sides of \eqref{eqstate}, and using \Cref{init-cond-iid}, we obtain
\begin{align}
\mathbb{E}\left[x^\circ(t) | \mathcal{F}^{0}_t\right]
&= S(t)\bar{\xi}
  - \int_{0}^{t} S(t-r) \big( BB^{\ast}\Pi(r)\, \mathbb{E}[ x^\circ(r) | \mathcal{F}^{0}_r ]
      - BB^{\ast} q(r) - F_1 g(r) \big) dr \notag \\
&\quad 
  + \int_{0}^{t} S(t-r) \big( D_0\, \mathbb{E}[ x^\circ(r) | \mathcal{F}^{0}_r ]
      + F_0 g(r) + \sigma_0 \big) \, dW_0(r),
\end{align}
which coincides with \eqref{yq}.
\end{remark}
\subsubsection{Fixed-Point Problem: Existence and Uniqueness for a Small Time Horizon}\label{sec:fixed-point-small-time} We impose the mean-field consistency condition given by  
\begin{equation}
    y_g(t)=g(t), \quad \forall t \in \mfT, \quad \mathbb{P}-a.s.\label{MF-CE}
\end{equation}
Subsequently, we seek a fixed point of the mapping $\Upsilon:g\mapsto y_g$ corresponding to \eqref{MF-CE} in the space
$C_{\mathcal{F}^0}(\mathcal{T}; L^2(\Omega; H))$.
This fixed point characterizes the mean field. To apply Banach's fixed-point theorem,
we establish uniform bounds on $\mathcal T$ for the operators and processes appearing in $\Upsilon$.

By reformulating the state equation as in \eqref{dylimisincomrf} and from \cite[Prop. 4.4]{liu2025hilbert}, we obtain a bound for the operator $\Pi(t)$ as in
\begin{gather}
\left \| \Pi(t) \right \|_{\mathcal{L}(H)} \leq \mathcal{C}_{\Pi}(T),\quad \forall t \in \mfT,\allowdisplaybreaks\\ 
\mathcal{C}_{\Pi}(T):=2M_{T}^{2}\mathrm{exp}\Big((8TM_{T}^{2}(\left\|D\right\|^2+\left\|D_0\right\|^2)\left ( \left \| G \right \|+T\left \| M \right \|  \right )\Big). \label{eq:pi-boundcom}
\end{gather}
We also impose the following assumption for the remainder of this section. Although this assumption may initially appear as a restriction on the model operators and the time horizon, we will demonstrate that it is automatically satisfied under the contraction condition we establish, thus imposing no additional restrictions on the model.  
 \begin{assumption} \label{alT}
 The model operators and the time horizon are such that
 \begin{equation*}
\alpha(T):=16 M^2_{T}T\left\|D_0 \right\|^{2}< 1.     
 \end{equation*}
   
 \end{assumption}
\begin{lemma}[Global Lipschitz Continuity of $q(t)$ wrt $g(t)$]\label{lemma:bounded-variationcom} Suppose that \Cref{alT} holds. Consider the processes $\Pi \in C_s(\mfT; \mathcal{L}(H))$ and $(q,\widetilde{q}) \in \mathcal{M}^{2}_{\mathcal{F}^{0}}(\mfT;H) \times \mathcal{M}_{\mathcal{F}^{0}}^2(\mfT; \mathcal{L}_2(V_{Q_0};H)) $, satisfying \eqref{riccacom} and \eqref{qstcom}, respectively. Then, for any two processes $g_1, g_2 \in  C_{\mathcal{F}^0}(\mathcal{T}; L^2(\Omega; H))$, we have 
\begin{gather}
\mathbb{E}\left |q_1(t)-q_2(t)\right |^{2} \leq \mathcal{C}_1(T) \left|g_1-g_2 \right|^{2}_{C_{\mathcal{F}^0}(\mathcal{T}; L^2(\Omega; H))},\quad \forall t \in \mfT,\label{qboundcom}
\end{gather}
where $q_1$ and $q_2$ are, respectively, the solutions of \eqref{qstcom} corresponding to the inputs $g=g_1$ and $g=g_2$, and 
\begin{align} \label{C1}
\mathcal{C}_1(T)=&\frac{2M^2_{T}}{1-\alpha(T) }\mathrm{exp}\left(\frac{8M^2_{T}}{1-\alpha(T) }\mathcal{C}^{2}_{\Pi}(T)\left\| B\right\|^4\right)\Bigg((\left\| G\right\|\left\| \widehat{F}_2\right\|)^{2}+16T^{2}\Big((\left\| M\right\|\left\| \widehat{F}_1\right\|)^{2}\notag\\&\hspace{5cm}+\mathcal{C}^{2}_{\Pi}(T)\left((\left\| D\right\|\left\| F_2\right\|)^{2}+(\left\| D_0\right\|\left\|F_{0} \right\|)^{2}+\left\| F_1\right\|^{2}\right)\Big)\Bigg).
\end{align}
\end{lemma}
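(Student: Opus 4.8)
The plan is to reduce to the difference equation and establish an a priori bound for the linear mild backward stochastic evolution equation it satisfies. Set $\Delta q:=q_1-q_2$, $\Delta\widetilde{q}:=\widetilde{q}_1-\widetilde{q}_2$ and $\Delta g:=g_1-g_2$. Since the Riccati equation \eqref{riccacom} does not depend on $g$, the operator $\Pi$ is common to both inputs; and since $\sigma,\sigma_0$ are $g$-independent, subtracting the two instances of \eqref{qstcom} yields the \emph{linear} equation
\begin{align*}
\Delta q(t)= S^{*}(T-t)G\widehat{F}_{2}\Delta g(T)&-\int_{t}^{T}S^{*}(r-t)\Big(\Pi(r)BB^{\ast}\Delta q(r)-M\widehat{F}_{1}\Delta g(r)+D^{\ast}\Pi(r)F_2\Delta g(r)\\
&\quad+D^{\ast}_{0}\big(\Pi(r)F_0\Delta g(r)-\Delta\widetilde{q}(r)\big)+\Pi(r)F_1\Delta g(r)\Big)dr-\int_{t}^{T}S^{*}(r-t)\Delta\widetilde{q}(r)\,dW_0(r),
\end{align*}
whose coefficients are uniformly bounded because $\|\Pi(r)\|_{\mathcal{L}(H)}\le\mathcal{C}_{\Pi}(T)$ by \eqref{eq:pi-boundcom}. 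Everything then comes down to estimating $\mathbb{E}|\Delta q(t)|^2$ and $\mathbb{E}\int_t^T\|\Delta\widetilde{q}(r)\|^2dr$ in terms of $|\Delta g|_{C_{\mathcal{F}^0}(\mathcal{T};L^2(\Omega;H))}$.

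First I would control the martingale integrand $\Delta\widetilde{q}$. Using the a priori $L^2$-estimate for mild solutions of backward stochastic evolution equations (as in \cite{hu1991adapted,guatteri2005backward}; equivalently, by passing to the limit in the Yosida-regularized equations \eqref{qnstcom} with the convergences \eqref{convbsde}, exactly as in the proof of \Cref{thm:opt-cntrl-limiting-common-noise}, since $A$ generates only a $C_0$-semigroup and Itô's formula cannot be applied to $|\Delta q(t)|^2$ directly) on each subinterval $[t,T]$, and lumping the affine terms, the feedback term $\Pi BB^{\ast}\Delta q$, and the self-coupling term $D_0^{\ast}\Delta\widetilde{q}$ into the source, one arrives at an inequality of the form
\begin{equation*}
\mathbb{E}\int_{t}^{T}\|\Delta\widetilde{q}(r)\|^{2}dr\le 16M_T^2T\Big(\Lambda(T)\,|\Delta g|^{2}_{C_{\mathcal{F}^0}(\mathcal{T};L^2(\Omega;H))}+\mathcal{C}_{\Pi}^{2}(T)\|B\|^{4}\,\mathbb{E}\int_{t}^{T}|\Delta q(r)|^{2}dr\Big)+\alpha(T)\,\mathbb{E}\int_{t}^{T}\|\Delta\widetilde{q}(r)\|^{2}dr,
\end{equation*}
for an explicit $\Lambda(T)$, using $16M_T^2T\|D_0\|^2=\alpha(T)$. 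By \Cref{alT} we have $\alpha(T)<1$, so the last term is absorbed on the left, producing the same bound with the factor $(1-\alpha(T))^{-1}$. This is precisely where \Cref{alT} enters, and it explains the appearance of $(1-\alpha(T))^{-1}$ in $\mathcal{C}_1(T)$.

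Next I would estimate $\mathbb{E}|\Delta q(t)|^{2}$ straight from the mild form. Conditioning on $\mathcal{F}^0_t$ kills the Itô integral, so $\Delta q(t)=\mathbb{E}\big[S^{*}(T-t)G\widehat{F}_2\Delta g(T)-\int_t^T S^{*}(r-t)(\cdots)\,dr\,\big|\,\mathcal{F}^0_t\big]$; applying Jensen's and Cauchy--Schwarz inequalities, the bound $\|S^{*}(\cdot)\|\le M_T$, the operator-norm bounds (including $\|\Pi\|\le\mathcal{C}_{\Pi}(T)$), and then substituting the estimate on $\mathbb{E}\int_t^T\|\Delta\widetilde{q}\|^2dr$ from the previous step, yields
\begin{equation*}
\mathbb{E}|\Delta q(t)|^{2}\le \Theta(T)\,|\Delta g|^{2}_{C_{\mathcal{F}^0}(\mathcal{T};L^2(\Omega;H))}+C(T)\int_{t}^{T}\mathbb{E}|\Delta q(r)|^{2}dr,
\end{equation*}
where $\Theta(T)$ collects $(\|G\|\|\widehat{F}_2\|)^2$ together with $(\|M\|\|\widehat{F}_1\|)^2$ and $\mathcal{C}_{\Pi}^2(T)\big((\|D\|\|F_2\|)^2+(\|D_0\|\|F_0\|)^2+\|F_1\|^2\big)$, all carrying $(1-\alpha(T))^{-1}$ and the explicit powers of $T$ produced by the Cauchy--Schwarz steps, while $C(T)$ is proportional to $M_T^2\mathcal{C}_{\Pi}^2(T)\|B\|^4/(1-\alpha(T))$ (the coefficient of $\Pi BB^{\ast}\Delta q$). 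Grönwall's inequality, applied backward on $[t,T]$, then gives \eqref{qboundcom} with $\mathcal{C}_1(T)$ as in \eqref{C1}, the exponential being the Grönwall factor.

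I expect the main obstacle to be the backward-noise term $\Delta\widetilde{q}$: it appears both as the stochastic-integral integrand and, through $D_0^{\ast}$, inside the drift of \eqref{qstcom}, so, unlike the deterministic forward-backward systems of \cite{liu2025hilbert,federico2024linear}, one cannot avoid controlling $\mathbb{E}\int_0^T\|\Delta\widetilde{q}(r)\|^2dr$, and this is possible only by absorbing it against the left-hand side, which is exactly what forces the smallness condition \Cref{alT}. A secondary technical point, already noted above, is that $A$ generates only a $C_0$-semigroup, so $\Delta q$ is merely a mild solution; the a priori energy estimate therefore has to be obtained through the Yosida approximation \eqref{qnstcom} and the limit passage \eqref{convbsde}, as in the proof of \Cref{thm:opt-cntrl-limiting-common-noise}.
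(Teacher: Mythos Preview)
The proposal is correct and follows essentially the same route as the paper: form the linear difference BSEE, use the a priori $L^2$-estimates (the paper cites \cite[Lemma~2.1]{hu1991adapted}, you sketch the same bound via conditioning and Jensen) to control $\mathbb{E}\int_t^T\|\Delta\widetilde{q}\|^2dr$ and $\mathbb{E}|\Delta q(t)|^2$, absorb the self-coupling $D_0^\ast\Delta\widetilde{q}$ using $\alpha(T)<1$, substitute, and finish with Grönwall. Your identification of the role of \Cref{alT} and of the need for Yosida approximation to justify the energy estimate for mild solutions matches the paper exactly.
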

\begin{proof} From \eqref{qstcom}, and for inputs \( g=g_1, g_2 \in C_{\mathcal{F}^0}(\mathcal{T}; L^2(\Omega; H)) \), we have
\begin{align} 
\delta(t)=&S^{*}(T-t)G\widehat{F}_{2}(g_{1}(T)-g_{2}(T))-\int_{t}^{T}S^{*}(r-t)\left(\Pi(r)BB^{\ast}\delta(r)+\psi(r)-D^{\ast}_{0}\widetilde{\delta}(r)\right)dr\notag \\ &-\int_{t}^{T}S^{*}(r-t)\widetilde{\delta}(r)dW_0(r)
\end{align}
where $\delta(t)=q_{1}(t)-q_{2}(t)$, $\widetilde{\delta}(t)=\widetilde{q}_{1}(t)-\widetilde{q}_{2}(t)$, and
\begin{equation}
  \psi(t)= (\Pi(t)F_1-M\widehat{F}_{1})(g_1(t)-g_2(t))+D^{\ast} \big(\Pi(t)F_2(g_1(t)-g_2(t))\big)+D^{\ast}_{0} \big(\Pi(t)F_{0}(g_1(t)-g_2(t))\big).
\end{equation}
Moreover, from \cite[Lemma~2.1]{hu1991adapted}, we obtain
\begin{align} 
\mathbb{E}\left |\delta(t) \right |^2 \leq &\, 2 M^2_{T}T\mathbb{E} \int_{t}^{T}\Big |\Pi(r)BB^{\ast}\delta(r)+\psi(r)-D^{\ast}_{0}\widetilde{\delta}(r) \Big |^2dr+2 (M_{T}\left\|G \right\|\left\|\widehat{F}_2 \right\|)^2\mathbb{E}\left |g_1(T)-g_2(T) \right |^2 \notag \\ \leq &\, 4 M^2_{T}T\mathbb{E} \int_{t}^{T}\Big |\Pi(r)BB^{\ast}\delta(r)+\psi(r) \Big |^2dr+\frac{\alpha(T)}{4}\mathbb{E}\int_{t}^{T}\left\| \widetilde{\delta}(r)\right\|^{2}dr +2 (M_{T}\left\|G \right\|\left\|\widehat{F}_2 \right\|)^2\mathbb{E}\left |g_1(T)-g_2(T) \right |^2,\label{dd} 
\end{align}
and
\begin{align}
\mathbb{E}\int_{t}^{T}\left\|\widetilde{\delta}(r) \right\|^{2}dt &\leq 8 M^2_{T}T\mathbb{E} \int_{t}^{T}\Big |\Pi(r)BB^{\ast}\delta(r)+\psi(r)-D^{\ast}_0\widetilde{\delta}(r)) \Big |^2dr +8 (M_{T}\left\|G \right\|\left\| \widehat{F}_2 \right\|)^2\mathbb{E}\left |g_1(T)-g_2(T) \right |^2 \notag \\ & \leq  16 M^2_{T}T\mathbb{E} \int_{t}^{T}\Big |\Pi(r)BB^{\ast}\delta(r)+\psi(r) \Big |^2dr+\alpha(T)\mathbb{E}\int_{t}^{T}\left\| \widetilde{\delta}(r)\right\|^{2}dr \notag \\ &\hspace{0.4cm}+8 (M_{T}\left\|G \right\|\left\|\widehat{F}_2 \right\|)^2\mathbb{E}\left |g_1(T)-g_2(T) \right |^2.
\end{align}
From \Cref{alT}, we further obtain
\begin{equation} \label{ddt}
  \mathbb{E}\int_{t}^{T}\left\|\widetilde{\delta}(r) \right\|^{2}dt  \leq \frac{1}{1-\alpha(T) } \Big(16 M^2_{T}T\,\mathbb{E} \int_{t}^{T}\Big |\Pi(r)BB^{\ast}\delta(r)+\psi(r) \Big |^2dr+ 8 \big(M_{T}\left\|G \right\|\left\|\widehat{F}_2 \right\|\big)^2\mathbb{E}\left |g_1(T)-g_2(T) \right |^2\Big).
\end{equation}
We then substitute \eqref{ddt} into \eqref{dd} to get 
\begin{align}
\mathbb{E}\left |\delta(t) \right |^2  &\leq  \frac{2M^2_{T}}{1-\alpha (T)}\left[2T\,\mathbb{E} \int_{t}^{T}\Big |\Pi(r)BB^{\ast}\delta(r)+\psi(r) \Big |^2dr+\left\|G \right\|^{2}\left\|\widehat{F}_2 \right\|^2\mathbb{E}\left |g_1(T)-g_2(T) \right |^2\right] \notag \\ &\leq \frac{2M^2_{T}}{1-\alpha(T) }\left[4T\,\mathbb{E} \int_{t}^{T}\Big(\Big |\Pi(r)BB^{\ast}\delta(r)\Big |^2+\left|\psi(r) \right|^2\Big)dr+\left\|G \right\|^{2}\left\|\widehat{F}_2 \right\|^2\mathbb{E}\left|g_1(T)-g_2(T) \right |^2\right] \notag \\  &\leq \frac{2M^2_{T}}{1-\alpha (T)}\bigg[4T\,\mathcal{C}^{2}_{\Pi}(T)\left\| B\right\|^4 \int_{t}^{T}\mathbb{E}\left |\delta(r) \right |^2dr+\Bigg(16T^{2}\Big((\left\| M\right\|\left\| \widehat{F}_1\right\|)^{2}+(\mathcal{C}_{\Pi}(T)\left\| F_1\right\|)^{2}\notag\\&\hspace{0.15cm}+\mathcal{C}^{2}_{\Pi}(T)\left((\left\| D\right\|\left\| F_2\right\|)^{2}+(\left\| D_0\right\|\left\|F_{0} \right\|)^{2}\right)\Big)+(\left\| G\right\|\left\| \widehat{F}_2\right\|)^{2}\Bigg)\left |g_1-g_2 \right |^{2}_{C_{\mathcal{F}^0}(\mathcal{T}; L^2(\Omega; H))} \bigg].
\end{align}
Finally, by the application of Grönwall's inequality, \eqref{qboundcom} follows. 
\end{proof}

\begin{theorem}[Contraction Condition]\label{Contraction-thm}
Suppose that \Cref{alT} holds. The mapping 
\[
\Upsilon: g \in C_{\mathcal{F}^0}(\mathcal{T}; L^2(\Omega; H)) \longrightarrow y_g \in C_{\mathcal{F}^0}(\mathcal{T}; L^2(\Omega; H)),
\]
associated with the mean-field consistency condition \eqref{MF-CE} admits a unique fixed point, denoted by $\bar{x}\in C_{\mathcal{F}^0}(\mathcal{T}; L^2(\Omega; H))$, if 
\begin{equation} \label{contrcom}
   \mathcal{C}_2(T) e^{T \mathcal{C}_3(T)} < 1,
\end{equation}
where
\begin{align} \label{C23}
   \mathcal{C}_2(T) &= 5M_T^2 T \left[ T\left( \|B\|^4 \mathcal{C}_1(T) + \|F_1\|^2 \right) + \|F_0\|^2 \right], \notag \\
   \mathcal{C}_3(T) &= 5M_T^2 \left[\|D_0\|^2 + T \|B\|^4 \mathcal{C}_\Pi^2(T) \right].
\end{align}
\end{theorem}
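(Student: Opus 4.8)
The plan is to realize $\bar x$ as the fixed point of $\Upsilon$ via Banach's fixed-point theorem on the Banach space $C_{\mathcal F^0}(\mfT;L^2(\Omega;H))$, so the work splits into verifying that $\Upsilon$ is a well-defined self-map and then showing it is a strict contraction under \eqref{contrcom}. For the first point, fix $g\in C_{\mathcal F^0}(\mfT;L^2(\Omega;H))$ and read \eqref{yq} as a linear stochastic evolution equation in $H$ driven by $W_0$: the drift $y\mapsto-BB^{\ast}\Pi(r)y$ and the diffusion $y\mapsto D_0y$ are affine with bounded coefficients (the bound $\|\Pi(r)\|_{\mathcal L(H)}\le\mathcal C_\Pi(T)$ coming from \eqref{eq:pi-boundcom}), while the inhomogeneous terms $BB^{\ast}q(r)+F_1g(r)$ and $F_0g(r)+\sigma_0$ belong to $\mathcal M^2_{\mathcal F^0}(\mfT;H)$ and $\mathcal M^2_{\mathcal F^0}(\mfT;\mathcal L_2(V_{Q_0};H))$ respectively, using that $q$ from \Cref{thm:opt-cntrl-limiting-common-noise} lies in $\mathcal M^2_{\mathcal F^0}(\mfT;H)$. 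Hence \eqref{yq} admits a unique mild solution in $\mathcal H^2_{\mathcal F^0}(\mfT;H)$, which by the standard mean-square continuity of mild solutions of linear SEEs has a modification in $C_{\mathcal F^0}(\mfT;L^2(\Omega;H))$; this defines $\Upsilon(g)=y_g$ unambiguously.

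For the contraction, take $g_1,g_2\in C_{\mathcal F^0}(\mfT;L^2(\Omega;H))$, put $y_j=\Upsilon(g_j)$, and let $q_j$ be the first component of the solution of \eqref{qstcom} for $g=g_j$. Writing $\Delta y=y_1-y_2$, $\Delta q=q_1-q_2$, $\Delta g=g_1-g_2$ and subtracting the two instances of \eqref{yq}, the term $S(t)\bar\xi$ cancels and $\Delta y(t)$ is a sum of three Bochner integrals with integrands $-BB^{\ast}\Pi(r)\Delta y(r)$, $BB^{\ast}\Delta q(r)$, $F_1\Delta g(r)$ and two $W_0$-stochastic integrals with integrands $D_0\Delta y(r)$, $F_0\Delta g(r)$. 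Applying $|\sum_{k=1}^{5}a_k|_H^2\le5\sum_{k=1}^{5}|a_k|_H^2$, then bounding each Bochner integral via Cauchy--Schwarz by $|\int_0^t S(t-r)\phi(r)\,dr|_H^2\le M_T^2T\int_0^t|\phi(r)|_H^2\,dr$ and each stochastic integral via the It\^o isometry by $\mathbb E|\int_0^t S(t-r)\Psi(r)\,dW_0(r)|_H^2\le M_T^2\,\mathbb E\int_0^t\|\Psi(r)\|_{\mathcal L_2(V_{Q_0};H)}^2\,dr$, and finally inserting $\|BB^{\ast}\Pi(r)\|_{\mathcal L(H)}\le\|B\|^2\mathcal C_\Pi(T)$ together with the operator norms of $B,D_0,F_0,F_1$, I obtain after taking expectations
\begin{align*}
\mathbb E|\Delta y(t)|_H^2\le{}&5M_T^2\big(\|D_0\|^2+T\|B\|^4\mathcal C_\Pi^2(T)\big)\int_0^t\mathbb E|\Delta y(r)|_H^2\,dr\\
&+5M_T^2\Big(T\|B\|^4\int_0^t\mathbb E|\Delta q(r)|_H^2\,dr+\big(T^2\|F_1\|^2+T\|F_0\|^2\big)|\Delta g|_{C_{\mathcal F^0}(\mfT;L^2(\Omega;H))}^2\Big).
\end{align*}

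Next I would feed in \Cref{lemma:bounded-variationcom} (this is precisely where \Cref{alT} is used), which gives $\mathbb E|\Delta q(r)|_H^2\le\mathcal C_1(T)|\Delta g|_{C_{\mathcal F^0}(\mfT;L^2(\Omega;H))}^2$ for every $r\in\mfT$; substituting and bounding $\int_0^t(\cdots)\,dr\le T(\cdots)$ and recognizing the coefficients produces exactly
\[
\mathbb E|\Delta y(t)|_H^2\le\mathcal C_3(T)\int_0^t\mathbb E|\Delta y(r)|_H^2\,dr+\mathcal C_2(T)\,|\Delta g|_{C_{\mathcal F^0}(\mfT;L^2(\Omega;H))}^2,
\]
with $\mathcal C_2(T),\mathcal C_3(T)$ as in \eqref{C23}. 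Gr\"onwall's inequality then yields $\mathbb E|\Delta y(t)|_H^2\le\mathcal C_2(T)e^{T\mathcal C_3(T)}|\Delta g|_{C_{\mathcal F^0}(\mfT;L^2(\Omega;H))}^2$ for all $t\in\mfT$, hence, taking the supremum over $t$, $|\Upsilon(g_1)-\Upsilon(g_2)|_{C_{\mathcal F^0}(\mfT;L^2(\Omega;H))}^2\le\mathcal C_2(T)e^{T\mathcal C_3(T)}|g_1-g_2|_{C_{\mathcal F^0}(\mfT;L^2(\Omega;H))}^2$. Under \eqref{contrcom} this factor is strictly less than $1$, so $\Upsilon$ is a contraction on a Banach space and Banach's fixed-point theorem delivers the unique fixed point $\bar x\in C_{\mathcal F^0}(\mfT;L^2(\Omega;H))$.

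I expect the genuinely delicate point — as opposed to the routine constant bookkeeping — to be the $W_0$-stochastic integral carrying $\Delta y$ itself: this term has no counterpart in the common-noise-free theory of \cite{liu2025hilbert}, where the mean-field equation is deterministic, and it is what forces the whole fixed-point argument into $C_{\mathcal F^0}(\mfT;L^2(\Omega;H))$ and contributes the $\|D_0\|^2$ term to $\mathcal C_3(T)$. The other ingredient requiring care is the Lipschitz control of $q$ in $g$ furnished by \Cref{lemma:bounded-variationcom}, whose own proof relies on \Cref{alT} to absorb the backward martingale integrand $D_0^{\ast}\widetilde q$. Finally, I would note that $\mathcal C_1(T)$, and therefore $\mathcal C_2(T)$ and the very expression appearing in \eqref{contrcom}, are only meaningful when $\alpha(T)<1$, so \Cref{alT} is subsumed by — rather than additional to — the contraction condition, as claimed.
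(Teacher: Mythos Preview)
Your proposal is correct and follows essentially the same approach as the paper: decompose $y_{g_1}(t)-y_{g_2}(t)$ into the five integral terms arising from \eqref{yq}, apply the crude bound $|\sum_{k=1}^5 a_k|^2\le 5\sum|a_k|^2$, estimate each piece via Cauchy--Schwarz or the It\^o isometry together with $\|\Pi\|\le\mathcal C_\Pi(T)$ and \Cref{lemma:bounded-variationcom}, and finish with Gr\"onwall. If anything, your write-up is slightly more careful than the paper's, since you explicitly verify that $\Upsilon$ is a well-defined self-map on $C_{\mathcal F^0}(\mfT;L^2(\Omega;H))$ before running the contraction estimate.
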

\begin{proof}
From \eqref{yq}, we have
\begin{align}
y_{g_1}(t) - y_{g_2}(t) &= -\int_{0}^{t} S(t - r) BB^{\ast} \Pi(r) \left( y_{g_1}(r) - y_{g_2}(r) \right) dr \notag \\
&\quad + \int_0^t S(t - r) D_0 \left( y_{g_1}(r) - y_{g_2}(r) \right) dW_0(r) + \int_0^t S(t - r) F_1 \left( g_1(r) - g_2(r) \right) dr \notag \\
&\quad + \int_0^t S(t - r) F_0 \left( g_{1}(r) - g_{2}(r) \right) dW_0(r)  + \int_0^t S(t - r) BB^{\ast} \left( q_1(r) - q_2(r) \right) dr \notag \\
&= \mathcal{I}_1 + \mathcal{I}_2 + \mathcal{I}_3 + \mathcal{I}_4 + \mathcal{I}_5.
\end{align}

Subsequently, we obtain
\begin{equation}
\mathbb{E} \left| y_{g_1}(t) - y_{g_2}(t) \right|^2 
\leq 5 \left( \mathbb{E} \left| \mathcal{I}_1 \right|^2 + \mathbb{E} \left| \mathcal{I}_2 \right|^2 + \mathbb{E} \left| \mathcal{I}_3 \right|^2 + \mathbb{E} \left| \mathcal{I}_4 \right|^2 + \mathbb{E} \left| \mathcal{I}_5 \right|^2 \right).
\end{equation}

For \( \mathbb{E} \left| \mathcal{I}_5 \right|^2 \), using \Cref{lemma:bounded-variationcom}, we have
\begin{align}
\mathbb{E} \left| \mathcal{I}_5 \right|^2 
&= \mathbb{E} \left| \int_{0}^{t} S(t - r) BB^{\ast} (q_1(r) - q_2(r)) \, dr \right|^2 \leq T \, \mathbb{E} \int_{0}^{t} \left| S(t - r) BB^{\ast} (q_1(r) - q_2(r)) \right|^2 dr \notag\\&\leq M_T^2 T \|B\|^4 \int_{0}^{T}\mathbb{E}\left |q_1(t)-q_2(t)\right |^{2}dt  \leq M_T^2 T^2 \|B\|^4 \mathcal{C}_1(T) \left| g_1 - g_2 \right|^2_{C_{\mathcal{F}^0}(\mathcal{T}; L^2(\Omega; H))}.
\end{align}
Using a similar treatment for the remaining terms and collecting all contributions, we obtain
\begin{equation}
\mathbb{E} \left| y_{g_1}(t) - y_{g_2}(t) \right|^2 
\leq \mathcal{C}_2(T) \left| g_1 - g_2 \right|^2_{C_{\mathcal{F}^0}(\mathcal{T}; L^2(\Omega; H))} 
+ \mathcal{C}_3(T) \int_0^t \mathbb{E} \left| y_{g_1}(r) - y_{g_2}(r) \right|^2 dr.
\end{equation}
The result then follows from Grönwall's inequality, yielding  
\begin{equation}
\mathbb{E} \left| y_{g_1}(t) - y_{g_2}(t) \right|^2 
 \leq \mathcal{C}_2(T)\, \exp(T\mathcal{C}_3(T))\left| g_1 - g_2 \right|^2_{C_{\mathcal{F}^0}(\mathcal{T}; L^2(\Omega; H))},
\end{equation}
and thus 
\begin{equation*}
 \left| y_{g_1} - y_{g_2}\right|_{_{C_{\mathcal{F}^0}(\mathcal{T}; L^2(\Omega; H))}}
 \leq (\mathcal{C}_2(T)\, \exp(T\mathcal{C}_3(T)))^\frac{1}{2} \left| g_1 - g_2 \right|_{C_{\mathcal{F}^0}(\mathcal{T}; L^2(\Omega; H))}.  
\end{equation*}
\end{proof}
\begin{remark} \label{contrcomre}
It is straightforward to verify the following convergence properties as \( T \rightarrow 0 \): (i) \( M_T \rightarrow 1 \),
    (ii) \( \alpha(T) \rightarrow 0 \),
    (iii) \( \mathcal{C}_{\Pi}(T) \rightarrow 1 \),
    (iv) \( \mathcal{C}_{1}(T) \rightarrow \theta > 0 \),
     (v) \( \mathcal{C}_{2}(T) \rightarrow 0 \),
    (vi) \( T \mathcal{C}_{3}(T) \rightarrow 0 \). Moreover, all of these functions are continuous in \( T \). Therefore, it can be shown that the contraction condition \eqref{contrcom} is satisfied for a sufficiently small time horizon \( T > 0 \). Moreover, for such a small time horizon \Cref{alT} is satisfied as we have \( \alpha(T) \rightarrow 0 \) as \( T \rightarrow 0 \).  Thus, \Cref{alT} does not impose any further restriction on the model parameters. 
\end{remark}
Suppose that the contraction condition \eqref{contrcom} holds, the unique fixed point \( \bar{x} \in C_{\mathcal{F}^0}(\mathcal{T}; L^2(\Omega; H)) \) of the mapping $\Upsilon$ associated with the mean-field consistency condition \eqref{MF-CE} characterizes the mean field.

\subsubsection{Nash Equilibrium} \label{nashcomsec}
\begin{theorem}[Nash Equilibrium]\label{Nash-eqcom}Consider the Hilbert space-valued limiting system, described by \eqref{dylimi-S0com} and \eqref{cosfinlimi-S0com} for $i\in\mathbb{N}$, and suppose that \Cref{init-cond-iid}, \Cref{Info-set-Adm-Cntrl-CN}, and the contraction condition \eqref{contrcom} hold. Then, the set of control laws $\{u_{i}^{\circ}\}_{i \in \mb{N}}$, where $u_{i}^{\circ}$ is given by
\begin{equation} \label{nashcom}
u_{i}^{\circ}=-B^{\ast}\left(\Pi(t) x_i(t) -q(t) \right),
\end{equation}
forms a unique Nash equilibrium for the limiting system. These control laws are fully characterized by the fixed point consisting of the mean field $\bar{x}(t) \in H$, the operator $\Pi(t) \in \mc{L}(H)$ and the pair $(q(t), \widetilde{q}(t))\in H \times \mathcal{L}_2(V_{Q_0},H)$, which solves the set of mean-field consistency equations given by 
\begin{align}
 & \frac{d}{dt}\left<\Pi(t)x, x\right> = -2\left<\Pi(t)x, Ax\right> +\left<\Pi(t)BB^{\ast}\Pi(t)x, x\right>\notag \allowdisplaybreaks\\&\hspace{2.4cm}-\left<D^{\ast}\Pi(t)Dx, x\right>-\left<D^{\ast}_{0}\Pi(t)D_{0}x, x\right>  - \left<Mx, x\right>, \quad    \Pi(T) = G,\quad x \in \mathcal{D}(A),\label{MF-eq-1}  \allowdisplaybreaks\\
 &q(t)=S^{*}(T-t)G\widehat{F}_{2}\bar{x}(T)-\int_{t}^{T}S^{*}(r-t)\Big(\Pi(r)BB^{\ast}q(r)-M\widehat{F}_{1}\bar{x}(r)+D^{\ast}(\Pi(r)(F_2\bar{x}(r)+ \sigma))\notag \allowdisplaybreaks\\  &\hspace{1cm}+D^{\ast}_{0} (\Pi(r)(F_0\bar{x}(r)+ \sigma_{0})-\widetilde{q}(r))+\Pi(r)F_1\bar{x}(r)\Big)dr-\int_{t}^{T}S^{*}(r-t)\widetilde{q}(r)dW_0(r), \allowdisplaybreaks\\
 &\bar{x}(t)=S(t)\bar{\xi}-\int_{0}^{t}S(t-r)((BB^{\ast}\Pi(r)-F_1)\bar{x}(r)-BB^{\ast}q(r))dr\notag \allowdisplaybreaks\\ &\hspace{1cm}+ \int_0^t S(t-r) ((D_0 + F_{0})\bar{x}(r)+\sigma_0) \, dW_0(r). \label{mfcom}
\end{align}
\end{theorem}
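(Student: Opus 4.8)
The plan is to assemble the pieces already in place. By \Cref{Contraction-thm}, the contraction condition \eqref{contrcom} guarantees a unique fixed point $\bar{x}\in C_{\mathcal{F}^0}(\mfT;L^2(\Omega;H))$ of the map $\Upsilon:g\mapsto y_g$. Substituting $g=\bar{x}$ (hence $p=F_2\bar{x}+\sigma$ and $p_0=F_0\bar{x}+\sigma_0$) into the backward equation \eqref{qstcom} and into the mean-field equation \eqref{yq}, and using $y_{\bar{x}}=\bar{x}$, one sees that the triple $(\Pi,(q,\widetilde{q}),\bar{x})$ solves exactly the coupled mean-field consistency system \eqref{MF-eq-1}--\eqref{mfcom}; conversely, any solution of that system produces a fixed point of $\Upsilon$, so by \Cref{Contraction-thm} the system \eqref{MF-eq-1}--\eqref{mfcom} is uniquely solvable. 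This identifies the objects appearing in the statement.

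Next I would freeze the mean field at $\bar{x}$ and invoke optimality. With the coupling term equal to $\bar{x}$, the dynamics \eqref{dylimi-S0com} reduce to \eqref{dylimisincom} and the cost \eqref{cosfinlimi-S0com} to \eqref{cosfinlimisincom}, both with $g=\bar{x}$. Hence \Cref{thm:opt-cntrl-limiting-common-noise} applies and the feedback law $u_i^{\circ}(t)=-B^{\ast}(\Pi(t)x_i(t)-q(t))$ of \eqref{nashcom} is the \emph{unique} minimizer of $J_i^{\infty}$ over $\mathcal{U}^i$ when the mean field is $\bar{x}$ --- uniqueness coming from the representation of $J$ as a constant plus $\mathbb{E}\int_0^T|u(t)+B^{\ast}\Pi(t)x(t)-B^{\ast}q(t)|^2\,dt$ derived in the proof of \Cref{thm:opt-cntrl-limiting-common-noise}. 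Admissibility of $u_i^{\circ}$ is immediate: $\Pi$ is deterministic and bounded by \eqref{eq:pi-boundcom}, $q$ is $\mathcal{F}^0$-adapted and square integrable, and $x_i$ is $\mathcal{F}^{i,0,\infty}$-adapted with finite second moments by the argument behind \Cref{prior}, so $u_i^{\circ}\in\mathcal{M}^2_{\mathcal{F}^{i,0,\infty}}(\mfT;U)$.

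Then I would close the loop via mean-field consistency. If every agent $j$ uses $u_j^{\circ}$, then in the limiting (infinite-population) game a single agent is asymptotically negligible, so the common-noise-adapted mean field is exogenous to any fixed agent $i$, and by \Cref{MF-eq} the realized limiting average state equals $y_{\bar{x}}$, which is $\bar{x}$ by the fixed-point property. Therefore the mean field generated by the profile $\{u_j^{\circ}\}$ is exactly the $\bar{x}$ against which $u_i^{\circ}$ was optimized, so no agent can lower its cost by a unilateral deviation; this is the Nash property. For uniqueness of the equilibrium, let $\{u_i^{\star}\}$ be any Nash equilibrium inducing a mean field $g^{\star}\in C_{\mathcal{F}^0}(\mfT;L^2(\Omega;H))$; by strict convexity of the LQ cost and the uniqueness clause of \Cref{thm:opt-cntrl-limiting-common-noise}, each $u_i^{\star}$ must be of the form $-B^{\ast}(\Pi(t)x_i(t)-q^{\star}(t))$ with $(q^{\star},\widetilde{q}^{\star})$ solving \eqref{qstcom} for $g=g^{\star}$; feeding this back into the mean-field equation gives $g^{\star}=\Upsilon(g^{\star})$, hence $g^{\star}=\bar{x}$ by \Cref{Contraction-thm}, and therefore $u_i^{\star}=u_i^{\circ}$.

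The step I expect to be the main obstacle is not any single estimate but the bookkeeping that links the fixed-point formulation to the coupled system \eqref{MF-eq-1}--\eqref{mfcom}, together with the uniqueness argument: one must be careful that ``best response'' in the limiting game genuinely reduces to the frozen-$g$ control problem (the quantitative justification of negligibility being deferred to the $\epsilon$-Nash theorem), and that the only feedback gains consistent with optimality are those coming from the Riccati operator $\Pi$ and the offset process $q$, which is precisely where strict convexity of $J$ and hence the uniqueness part of \Cref{thm:opt-cntrl-limiting-common-noise} are essential.
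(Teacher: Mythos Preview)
Your proposal is correct and follows essentially the same approach as the paper: the paper's proof simply states that the result is a direct consequence of \Cref{thm:opt-cntrl-limiting-common-noise}, \Cref{MF-eq}, and \Cref{Contraction-thm}, and then records the closed-loop state equation \eqref{eqstatecom} and the SDE interpretation of \eqref{mfcom}. Your write-up is a more detailed elaboration of exactly this logic---fixed point from \Cref{Contraction-thm}, optimality from \Cref{thm:opt-cntrl-limiting-common-noise} with $g=\bar{x}$, and consistency via \Cref{MF-eq}---together with an explicit uniqueness argument that the paper leaves implicit.
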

\begin{proof}
The proof is a direct result of \Cref{thm:opt-cntrl-limiting-common-noise}, \Cref{MF-eq} and \Cref{Contraction-thm}. We note that the equilibrium state in \eqref{nashcom} is given by 
\begin{align} \label{eqstatecom}
x_i(t)=&S(t)\xi_i-\int_{0}^{t}S(t-r)(BB^{\ast}\Pi(r)x_i(r)-BB^{\ast}q(r)-F_1\bar{x}(r))dr\notag \\ &+\int_0^t S(t-r) (D x_i(r) + F_2\bar{x}(r)+\sigma) \, dW_i(r)+ \int_0^t S(t-r) (D_0 x_i(r) + F_{0}\bar{x}(r)+\sigma_0) \, dW_0(r).
\end{align}
Moreover, \eqref{mfcom}  is the mild solution to the stochastic differential equation given by
\begin{align}
d\bar{x}(t)=(A\bar{x}(t)-((BB^{\ast}\Pi(t)-F_1)\bar{x}(t)+BB^{\ast}q(t))dt+ ((D_0 + F_{0})\bar{x}(t)+\sigma_0)dW_0(t),
\end{align}
with $\bar{x}(0)=\bar{\xi}$.
\end{proof}
\subsection{{$\varepsilon$-Nash Property}}\label{sec:epnashcom}
In this section, we establish the $\epsilon$-Nash property of the set of control laws $\{u_i^{\circ}\}_{i \in \mc{N}}$ specified in \eqref{nashcom} for the $N$-player game defined by \eqref{statecom}--\eqref{cosfinNcom}. Due to the symmetry (exchangeability) among agents, it suffices to consider the case in which agent $i = 1$ deviates from these Nash equilibrium strategies. Specifically, we assume that all agents $i \in \mc{N}$, $i \neq 1$, adopt the feedback strategy $u_i^{[N],\circ}$ given by 
\begin{equation} \label{equil-strategy-N-player}
u^{[N],\circ}_i(t) = -B^{\ast}\left( \Pi(t)\, x^{[N]}_i(t)- q(t)\right),
\end{equation}
while agent $i=1$ is allowed to employ an arbitrary strategy $u_1^{[N]} \in \mc{U}^{[N],0}$. Here, for clarity we use the superscript $[N]$ to denote the processes associated with the $N$-player game. The resulting system of stochastic evolution equations is then given by
\begin{align} 
x_{1}^{[N]}(t)=&S(t)\xi_1+\int_{0}^{t}S(t-r)\left (Bu_1^{[N]}(r)+F_1x^{(N)}(r)  \right )dr+\int_{0}^{t}S(t-r)p_{1}^{[N]}(r)dW_{1}(r)\notag \allowdisplaybreaks\\&+\int_{0}^{t}S(t-r)p_{1,0}^{[N]}(r)dW_{0}(r),\label{fini1com} \allowdisplaybreaks\\
x_{i}^{[N]}(t)=&S(t)\xi_i+\int_{0}^{t}S(t-r)\left (Bu^{[N],\circ}_{i}(r)+F_1x^{(N)}(r)  \right )dr+\int_{0}^{t}S(t-r)p_{i}^{[N]}(r)dW_{i}(r)\notag \allowdisplaybreaks\\&+\int_{0}^{t}S(t-r)p_{i,0}^{[N]}(r)dW_{0}(r),\quad i\in \mathcal{N},\,\,\text{and}\,\,\, i \neq 1, \label{finiicom}
\end{align}
where $x^{(N)}_t=\frac{1}{N}\sum_{i\in \mc{N}}x^{[N]}_{i}(t)$,\, $p_{i}^{[N]}(t):=Dx_{i}^{[N]}(t)+Fx^{(N)}(t)+\sigma$,\, $p_{i,0}^{[N]}(t):=D_0x_{i}^{[N]}(t)+F_0x^{(N)}(t)+\sigma_0$. 
Furthermore, we recall that the cost functional of agent $i=1$ in the $N$-player game is given by  
\begin{equation}\label{cost-epscom}
 J^{[N]}_1(u_{1}^{[N]},u_{-1}^{[N],\circ}):= \mathbb{E}\int_{0}^{T}\!\!\left(\left|M^{\frac{1}{2}}\left (x_{1}^{[N]}(t)- x^{(N)}(t)  \right ) \right|^{2}+\left| u_{1}^{[N]}(t) \right|^{2}\right)dt+\mathbb{E}\left|G^{\frac{1}{2}}\left (x_{1}^{[N]}(T)- x^{(N)}(T)  \right ) \right|^{2}.
\end{equation}
At equilibrium, where agent $i = 1$ also employs the strategy $u_1^{[N],\circ}$ given by \eqref{equil-strategy-N-player} for $i=1$, we denote its state by $x_1^{[N],\circ}$, the state of agent $i$ by $x_i^{[N],\circ}$, and the corresponding average state of all $N$ agents by $x^{(N),\circ}$.

The $\epsilon$-Nash property indicates that 
\begin{equation} \label{epnashcom}
  J^{[N]}_1(u^{[N],\circ}_{1},u^{[N],\circ}_{-1}) \leq   \inf_{u_1^{[N]} \in \mc{U}^{[N],0}}J^{[N]}_1(u_{1}^{[N]},u^{[N],\circ}_{-1})+\epsilon_N,
\end{equation}
where the sequence  $\{\epsilon_N\}_{N\in \mb{N}}$ converges to zero. The above property can equivalently be expressed as 
\begin{equation} \label{epnashcom1}
  J^{[N]}_1(u^{[N],\circ}_{1},u^{[N],\circ}_{-1}) \leq   \inf_{u_1^{[N]} \in \mc{A}^{[N],0}}J^{[N]}_1(u_{1}^{[N]},u^{[N],\circ}_{-1})+\epsilon_N,
\end{equation}
where $\mc{A}^{[N],0}:=\left\{u_1^{[N]} \in \mc{U}^{[N],0}: J^{[N]}_1(u_1^{[N]},u^{[N],\circ}_{-1}) \leq  J^{[N]}_1(u^{[N],\circ}_{1},u^{[N],\circ}_{-1})  \right\}$, which is a non-empty set. Therefore, in the analysis of this section, we restrict $u_1^{[N]}$ to the set $\mc{A}^{[N],0}$.
\begin{lemma}\label{thm:lemmasumcom}
Consider the $N$ coupled systems described by \eqref{fini1com}--\eqref{finiicom}. Then, the property
\begin{equation}\label{prop-boundednesscom}
\mathbb{E}\left [\sum_{i\in \mc{N}}\left|x_{i}^{[N]}(t) \right|^2_H  \right ] \leq CN ,  
\end{equation}
holds uniformly for all \(t \in \mfT\). Here, the constant \( C \) is independent of both the strategy of agent \( i = 1 \), i.e., \( u_1^{[N]} \in \mathcal{A}^{[N],0} \), and the number of agents \( N \).
\end{lemma}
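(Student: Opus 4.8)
The plan is to derive a Grönwall inequality for $\phi(t):=\mathbb{E}\big[\sum_{i\in\mc N}|x_i^{[N]}(t)|_H^2\big]$ whose constants depend only on the model data. Since the feedback laws \eqref{equil-strategy-N-player} are affine in the state with uniformly bounded coefficients (recall $\|\Pi(t)\|_{\mathcal{L}(H)}\le\mathcal{C}_{\Pi}(T)$ from \eqref{eq:pi-boundcom} and $q\in\mathcal{M}^2_{\mathcal{F}^0}(\mfT;H)$), the closed-loop system \eqref{fini1com}--\eqref{finiicom} fits the framework of \Cref{enucom}, so it has a unique mild solution in $\mathcal{H}^{2}_{\mathcal{F}^{[N],0}}(\mfT;H^N)$ and in particular $\sup_{t\in\mfT}\phi(t)<\infty$; the content of the lemma is to replace the (a priori possibly $N$-dependent) bound from \Cref{enucom} by one that scales like $N$ with an $N$-independent constant.

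First I would control the energy of the deviating agent. Because $u_1^{[N]}\in\mc A^{[N],0}$ and the running cost in \eqref{cost-epscom} dominates $\mathbb{E}\int_0^T|u_1^{[N]}(t)|^2dt$, one has $\mathbb{E}\int_0^T|u_1^{[N]}(t)|^2dt\le J_1^{[N]}(u_1^{[N]},u_{-1}^{[N],\circ})\le J_1^{[N]}(u_1^{[N],\circ},u_{-1}^{[N],\circ})$, so it suffices to bound the equilibrium cost uniformly in $N$. At the $N$-player equilibrium the states satisfy the closed-loop form of \eqref{fini1com}--\eqref{finiicom} with $u_i^{[N],\circ}=-B^\ast(\Pi x_i^{[N],\circ}-q)$ for every $i$, which remains coupled only through the empirical average $x^{(N),\circ}$. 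Applying $|\cdot|_H^2$, taking expectations, using $\|S\|_{\mathcal{L}(H)}\le M_T$, the Cauchy--Schwarz inequality on the drift, Itô's isometry on the stochastic integrals, and the linear growth of the diffusion coefficients, then summing over $i$, dividing by $N$, and invoking $|x^{(N),\circ}(r)|^2\le\frac1N\sum_i|x_i^{[N],\circ}(r)|^2$, yields a Grönwall inequality for $\psi(t):=\frac1N\sum_i\mathbb{E}|x_i^{[N],\circ}(t)|^2$ with $N$-independent constants; the only external quantities entering are $\mathbb{E}|\xi_1|^2<\infty$ (by \Cref{init-cond-iid}) and $\sup_t\mathbb{E}|q(t)|^2$, which belongs to the limiting game and is therefore $N$-free. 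Hence $\sup_t\psi(t)\le C$, and by exchangeability of the agents $\mathbb{E}|x_1^{[N],\circ}(t)|^2=\psi(t)\le C$, so $\mathbb{E}\int_0^T|u_1^{[N],\circ}(t)|^2dt\le C(\sup_t\psi(t)+\sup_t\mathbb{E}|q(t)|^2)\le C$; feeding these into \eqref{cost-epscom} gives $J_1^{[N]}(u_1^{[N],\circ},u_{-1}^{[N],\circ})\le C$ uniformly in $N$, and thus $\mathbb{E}\int_0^T|u_1^{[N]}(t)|^2dt\le C$ for every $u_1^{[N]}\in\mc A^{[N],0}$.

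Next I would run the summed a priori estimate on the full system \eqref{fini1com}--\eqref{finiicom}. For each $i$, exactly as in the proof of \Cref{prior}, apply $|\cdot|_H^2$, take expectations, split into the initial-value, drift, $W_i$- and $W_0$-terms, and use $\|S\|_{\mathcal{L}(H)}\le M_T$, Cauchy--Schwarz (a factor $T$) on the drift, Itô's isometry on the two stochastic integrals, and the linear growth $\|p_i^{[N]}\|_{\mathcal{L}_2}^2,\|p_{i,0}^{[N]}\|_{\mathcal{L}_2}^2\le C(|x_i^{[N]}|^2+|x^{(N)}|^2+1)$, to obtain
\begin{equation*}
\mathbb{E}|x_i^{[N]}(t)|^2\le C\Big(\mathbb{E}|\xi_i|^2+\mathbb{E}\!\int_0^t|u_i^{[N]}(r)|^2\,dr+\mathbb{E}\!\int_0^t\big(|x_i^{[N]}(r)|^2+|x^{(N)}(r)|^2+1\big)\,dr\Big).
\end{equation*}
Summing over $i\in\mc N$: $\sum_i\mathbb{E}|\xi_i|^2=N\,\mathbb{E}|\xi_1|^2\le CN$; $\sum_i|x^{(N)}(r)|^2=N|x^{(N)}(r)|^2\le\sum_i|x_i^{[N]}(r)|^2$; and, splitting the control term, $\mathbb{E}\int_0^T|u_1^{[N]}(r)|^2dr\le C$ by the previous step, while for $i\neq1$ one has $|u_i^{[N],\circ}(r)|^2\le C(|x_i^{[N]}(r)|^2+|q(r)|^2)$, so $\sum_{i\neq1}\mathbb{E}\int_0^T|u_i^{[N],\circ}(r)|^2dr\le C\big(\mathbb{E}\int_0^T\!\sum_i|x_i^{[N]}(r)|^2dr+N\big)$. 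Collecting terms yields $\phi(t)\le CN+C\int_0^t\phi(r)\,dr$ on $\mfT$; since $\sup_t\phi(t)<\infty$, Grönwall's inequality gives $\phi(t)\le CN e^{CT}$ for all $t\in\mfT$, which is \eqref{prop-boundednesscom} with a constant independent of $N$ and of the choice $u_1^{[N]}\in\mc A^{[N],0}$.

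The routine ingredients are the mild-solution $L^2$-estimates (identical in spirit to \Cref{prior}) and the Grönwall step. The delicate point is the uniform-in-$N$ bound on the equilibrium cost in the second paragraph: it is \emph{not} a corollary of \Cref{enucom}, whose constant may grow with $N$, and instead relies on the self-contained Grönwall estimate for the fully coupled closed-loop equilibrium dynamics after passing to the empirical average, together with the exchangeability identity $\mathbb{E}|x_1^{[N],\circ}(t)|^2=\frac1N\sum_i\mathbb{E}|x_i^{[N],\circ}(t)|^2$. A secondary subtlety is the bound $\sup_t\mathbb{E}|q(t)|^2<\infty$: it follows from the standard a priori estimate for mild solutions of the backward stochastic evolution equation \eqref{qstcom}, but since Itô's formula applies only to strong solutions one must argue through the Yosida approximations $(q_n,\widetilde q_n)$ of \eqref{qnstcom} and pass to the limit via \eqref{convbsde}, exactly the device already used in the proof of \Cref{thm:opt-cntrl-limiting-common-noise}.
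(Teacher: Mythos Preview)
Your proof is correct and follows essentially the same approach as the paper: first establish the uniform-in-$N$ bound at the full equilibrium via a summed Grönwall estimate and exchangeability, use it together with the definition of $\mc A^{[N],0}$ to control $\mathbb{E}\int_0^T|u_1^{[N]}|^2dt$, and then run the summed estimate on the system with deviation to conclude. Your write-up is in fact slightly more careful than the paper in flagging why $\sup_t\mathbb{E}|q(t)|^2<\infty$ and why $\sup_t\phi(t)<\infty$ a priori, but the underlying argument is the same.
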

\begin{proof} 
We first consider the equilibrium case in which agent $i = 1$ also employs $u^{[N],\circ}_1$. In this case, the state of agent $i$, $i\in \mc{N}$, satisfies \eqref{finiicom} with 
$u_i^{[N],\circ}(t) = -B^{\ast}\!\left(\Pi(t)x^{[N],\circ}_i(t) - q(t)\right)$. Therefore, 
\begin{align} \label{boundicom}
\mathbb{E}\left| x_{i}^{[N],\circ}(t)\right|^{2}& \!\leq C^{\circ} \mathbb{E}\left[\left | \xi_i \right |^2\!+\!\!\int_{0}^{t}\left(\left|-BB^{\ast}\left(\Pi(r) x^{[N],\circ}_i(r) -q(r) \right) \!+\!F_1x^{(N),\circ}(r)\right|^{2}\!\!\!+\!\left\|p_{i}^{[N],\circ}\right\|_{\mathcal{L}_2}^{2}+\left\|p_{i,0}^{[N],\circ}\right\|_{\mathcal{L}_2}^{2}\right)dr\right] \notag \\  &\leq  C^{\circ} \Big(\int_{0}^{t}\mathbb{E}\left|x_{i}^{[N],\circ}(r)\right|^{2}dr+\int_{0}^{t}\mathbb{E}\left|x^{(N),\circ}(r)\right|^{2}dr+1\Big) \notag \\  &\leq C^{\circ} \Big(\int_{0}^{t}\mathbb{E}\left|x_{i}^{[N],\circ}(r)\right|^{2}dr+\frac{1}{N}\int_{0}^{t}\mathbb{E}\bigg [\sum_{j\in \mc{N}}\left|x_{j}^{[N],\circ}(r) \right|^{2}  \bigg ]dr+1\Big).
\end{align}
By summing \eqref{boundicom} over all $i \in \mc{N}$, we obtain
\begin{equation}
\mathbb{E}\left [\sum_{i\in \mc{N}}\left|x_{i}^{[N],\circ}(t) \right|^{2}\right ] \leq 
C^{\circ} \left (N+2\int_{0}^{t}\mathbb{E}\left [\sum_{i\in \mc{N}}\left|x_{i}^{[N],\circ}(r) \right|^{2}  \right ]dr  \right ).
\end{equation}
Applying Grönwall's inequality to the above equation results in
\begin{equation}
\mathbb{E}\left [\sum_{i\in \mc{N}}\left|x_{i}^{[N],\circ}(t) \right|^2_H  \right ] \leq C^{\circ}N.     
\end{equation}
Since all agents are symmetric at  equilibrium, for every $t \in \mfT$ and $N \in \mathbb{N}$, we further have 
\begin{equation} \label{steq}
\mathbb{E}\left|x_{i}^{[N],\circ}(t)\right|^{2} \leq C^{\circ}, 
\,\,\, \forall i \in \mc{N},\quad \text{and} \quad  \quad\mathbb{E}\left|x^{(N),\circ}(t)\right|^{2} \leq C^{\circ}.\end{equation}
Then, it is straightforward to verify that for any $u_1^{[N]} \in \mc{A}^{[N],0} $ we have 
\begin{equation} \label{enbound}
\mathbb{E}\left [\int_{0}^{T}\left |u_1^{[N]}(t) \right |^2dt  \right ]\leq   J^{[N]}_1(u_1^{[N]},u^{[N],\circ}_{-1}) \leq J^{[N]}_1(u^{[N],\circ}_{1},u^{[N],\circ}_{-1}) \leq C^{\circ \circ},
\end{equation}
where $C^{\circ \circ}$ is a constant independent of $N$.
Now suppose agent $i=1$ chooses an alternative control $u_1^{[N]} \in  \mc{A}^{[N],0}$. Using \eqref{enbound}, we obtain
\begin{align} \label{bound1com}
\mathbb{E}\left|x_{1}^{[N]}(t) \right|^{2} & \leq C^{\ast} \mathbb{E}\left[\left | \xi_1 \right |^2\!+\!\!\int_{0}^{t}\left(\left|u_{1}^{[N]}(r)\right|^2+\left|F_1 x^{(N)}(r)\right|^{2}\!\!\!+\!\left\|p_{1}^{[N]}\right\|_{\mathcal{L}_2}^{2}+\left\|p_{1,0}^{[N]}\right\|_{\mathcal{L}_2}^{2}\right)dr\right] \notag \\ & \leq C^{\ast} \Big( \int_{0}^{t}\mathbb{E}\left|x_{1}^{[N]}(r)\right|^{2}dr+\frac{1}{N}\int_{0}^{t}\mathbb{E}\Big [\sum_{j\in \mc{N}}\left|x_{j}^{[N]}(r) \right|^{2}  \Big]dr+1\Big), 
\end{align}
where the constant $C^{\ast}$ depends only on $C^{\circ \circ}$ and the model parameters, and is independent of $N$. Moreover, for agent $i$, $i\in \mc{N}$ and $i\neq 1$, we have
\begin{align} \label{boundicom1}
\mathbb{E}\left| x_{i}^{[N]}(t)\right|^{2}& \!\leq C^{\circ} \mathbb{E}\left[\left | \xi_i \right |^2\!+\!\!\int_{0}^{t}\left(\left| -BB^{\ast}\left(\Pi(r) x^{[N]}_i(r) -q(r)\right)\!+\!F_1x^{(N)}(r)\right|^{2}\!\!\!+\!\left\|p_{i}^{[N]}\right\|_{\mathcal{L}_2}^{2}+\left\|p_{i,0}^{[N]}\right\|_{\mathcal{L}_2}^{2}\right)dr\right] \notag \\  &\leq  C^{\circ} \Big(\int_{0}^{t}\mathbb{E}\left|x_{i}^{[N]}(r)\right|^{2}dr+\int_{0}^{t}\mathbb{E}\left|x^{(N)}(r)\right|^{2}dr+1\Big) \notag \\  &\leq C^{\circ} \Big(\int_{0}^{t}\mathbb{E}\left|x_{i}^{[N]}(r)\right|^{2}dr+\frac{1}{N}\int_{0}^{t}\mathbb{E}\bigg [\sum_{j\in \mc{N}}\left|x_{j}^{[N]}(r) \right|^{2}  \bigg ]dr+1\Big).
\end{align}
From \eqref{bound1com} and \eqref{boundicom1}, and by choosing $C = C^{\ast} \vee C^{\circ}$, we obtain
\begin{equation}
\mathbb{E}\left [\sum_{i\in \mc{N}}\left|x_{i}^{[N]}(t) \right|^{2}\right ] \leq 
C\left (N+2\int_{0}^{t}\mathbb{E}\left [\sum_{i\in \mc{N}}\left|x_{i}^{[N]}(r) \right|^{2}  \right ]dr  \right )  .
\end{equation}
Finally, applying Grönwall's inequality to the above equation results in \eqref{prop-boundednesscom}.
\end{proof}
From Lemma~\ref{thm:lemmasumcom}, for every $t\in\mfT$ and $N\in\mathbb{N}$, we have
\begin{equation}\label{boundcom}
\mathbb{E}\left|x^{(N)}(t)\right|^{2}
\leq C.
\end{equation}
Moreover, by \eqref{boundicom1} and \eqref{prop-boundednesscom}, it follows that
\begin{equation}\label{boundcom-ind}
\mathbb{E}\left|x^{[N]}_{i}(t)\right|^{2} \leq C,
\qquad \forall\, i \in \mc{N}.
\end{equation}
\begin{theorem}[Average State Error Bound] \label{conatmcom}
Suppose the state of any agent $i$, $i\in\mc{N}$ and $i\neq 1$, satisfies \eqref{finiicom}, where the agent employs the strategy $u^{[N],\circ}_i$ given by \eqref{equil-strategy-N-player}. For any strategy $u_1^{[N]} \in \mc{A}^{[N],0}$ that agent $i=1$ chooses, we have 
\begin{equation}\label{convergencecom}
\displaystyle \sup_{t\in \mfT}\mathbb{E}\left|\bar{x}(t)- x^{(N)}(t) \right|_{H}^{2} \leq \frac{C}{N}.
\end{equation}
\end{theorem}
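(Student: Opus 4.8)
The plan is to show that the empirical average $x^{(N)}$ solves, up to a small remainder, the \emph{same} mild equation \eqref{mfcom} as the mean field $\bar x$, and then close the estimate with Gr\"onwall's inequality. First I would average the mild equations \eqref{fini1com}--\eqref{finiicom} over $i\in\mc{N}$; by linearity of the Bochner and stochastic convolutions this yields a mild equation for $x^{(N)}$. In the drift, since the agents $i\neq1$ use the feedback \eqref{equil-strategy-N-player}, a short computation gives $\frac1N\sum_{i\in\mc{N}}Bu_i^{[N]}(r)=-BB^{\ast}\Pi(r)x^{(N)}(r)+BB^{\ast}q(r)+\rho^{(N)}(r)$ with $\rho^{(N)}(r):=\frac1N\big(Bu_1^{[N]}(r)+BB^{\ast}\Pi(r)x_1^{[N]}(r)-BB^{\ast}q(r)\big)$, so that the averaged drift equals $-(BB^{\ast}\Pi(r)-F_1)x^{(N)}(r)+BB^{\ast}q(r)+\rho^{(N)}(r)$, matching \eqref{mfcom} modulo $\rho^{(N)}$. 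The crucial structural point is the diffusion: since $D_0$, $F_0$ are linear and the additive term $\sigma_0$ is common, $\frac1N\sum_{i\in\mc{N}}p_{i,0}^{[N]}(r)=(D_0+F_0)x^{(N)}(r)+\sigma_0$ \emph{exactly}, i.e.\ the common-noise coefficient collapses to precisely the affine coefficient appearing in \eqref{mfcom}; by contrast the idiosyncratic part stays as a sum $\frac1N\sum_i\int_0^tS(t-r)p_i^{[N]}(r)\,dW_i(r)$ of integrals against \emph{independent} Wiener processes. Hence $x^{(N)}$ satisfies \eqref{mfcom} with $\bar\xi$ replaced by $\bar\xi^{(N)}:=\frac1N\sum_i\xi_i$ and an added remainder $R^{(N)}(t):=S(t)(\bar\xi^{(N)}-\bar\xi)+\int_0^tS(t-r)\rho^{(N)}(r)\,dr+\frac1N\sum_i\int_0^tS(t-r)p_i^{[N]}(r)\,dW_i(r)$.

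Next I would establish $\sup_{t\in\mfT}\mathbb{E}\,|R^{(N)}(t)|_H^2\le C/N$ by treating its three pieces separately. The initial-data term is controlled by the law of large numbers: under \Cref{init-cond-iid} the $\xi_i$ are i.i.d., so $\mathbb{E}|\bar\xi^{(N)}-\bar\xi|^2=\tfrac1N\mathrm{Var}(\xi_1)\le C/N$. For $\rho^{(N)}$, the bound $\|\Pi(\cdot)\|_{\mathcal{L}(H)}\le\mathcal{C}_\Pi(T)$ from \eqref{eq:pi-boundcom}, the a priori energy estimate $\mathbb{E}\int_0^T|u_1^{[N]}(r)|^2dr\le C$ from \eqref{enbound}, the state bound \eqref{boundcom-ind}, and $q\in\mathcal{M}^2_{\mathcal{F}^0}(\mfT;H)$ give $\mathbb{E}\int_0^T|\rho^{(N)}(r)|^2dr\le C/N^2$, so this contribution is actually $O(N^{-2})$. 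For the idiosyncratic martingale part I would use that stochastic integrals against the mutually independent $W_i$ are orthogonal in $L^2(\Omega;H)$, so its squared $L^2$ norm equals $\tfrac1{N^2}\sum_i\mathbb{E}\int_0^t\|S(t-r)p_i^{[N]}(r)\|_{\mathcal{L}_2}^2dr$; combining the growth bound $\|p_i^{[N]}(r)\|_{\mathcal{L}_2}^2\le C(1+|x_i^{[N]}(r)|^2+|x^{(N)}(r)|^2)$ with the uniform estimates $\mathbb{E}\sum_i|x_i^{[N]}(t)|^2\le CN$ and $\mathbb{E}|x^{(N)}(t)|^2\le C$ from Lemma~\ref{thm:lemmasumcom}, \eqref{prop-boundednesscom} and \eqref{boundcom}, this is at most $CN/N^2=C/N$.

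Finally, setting $e^{(N)}:=x^{(N)}-\bar x$ and subtracting \eqref{mfcom} from the averaged equation yields the closed linear mild equation $e^{(N)}(t)=-\int_0^tS(t-r)(BB^{\ast}\Pi(r)-F_1)e^{(N)}(r)dr+\int_0^tS(t-r)(D_0+F_0)e^{(N)}(r)dW_0(r)+R^{(N)}(t)$. Taking $\mathbb{E}|\cdot|_H^2$, applying $(a+b+c)^2\le3(a^2+b^2+c^2)$, Cauchy--Schwarz on the Bochner integral, It\^o's isometry on the $W_0$-integral, the semigroup bound $\|S(\cdot)\|_{\mathcal{L}(H)}\le M_T$, and boundedness of $\Pi$, one obtains $\mathbb{E}|e^{(N)}(t)|^2\le\tfrac{C}{N}+C'\int_0^t\mathbb{E}|e^{(N)}(r)|^2dr$, and Gr\"onwall's inequality gives $\sup_{t\in\mfT}\mathbb{E}|e^{(N)}(t)|^2\le\tfrac{C}{N}e^{C'T}$, which is \eqref{convergencecom}. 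I expect the main obstacle to be exactly the bookkeeping that distinguishes the two noise sources: recognizing that the averaged common-noise coefficient reproduces the mean-field coefficient (so it enters the Gr\"onwall term, \emph{not} the remainder), while the idiosyncratic noises decay like $1/N$ only through independence/orthogonality; a secondary care point is that agent $1$ plays an \emph{arbitrary} deviation, so every term carrying $x_1^{[N]}$ or $u_1^{[N]}$ must be absorbed into $R^{(N)}$ using only the a priori bounds \eqref{enbound}, \eqref{boundcom-ind} and Lemma~\ref{thm:lemmasumcom}, not any feedback structure.
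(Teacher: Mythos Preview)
Your proposal is correct and follows essentially the same route as the paper: average the mild equations, observe that the common-noise coefficient collapses exactly to $(D_0+F_0)x^{(N)}+\sigma_0$ so it enters the Gr\"onwall term, bound the idiosyncratic martingale sum by $C/N$ via orthogonality of the independent $W_i$, bound the agent-$1$ deviation by $C/N^2$ using \eqref{enbound} and \eqref{boundcom-ind}, and close with Gr\"onwall. The only cosmetic difference is that you package the three small terms into a single remainder $R^{(N)}$ and bound it first, whereas the paper carries the pieces separately through the final inequality; the structure and the estimates are the same.
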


\begin{proof}
The average state \( x^{(N)}(t) \), by direct computation, is given by
\begin{align}
x^{(N)}(t) =\,& S(t)x^{(N)}(0) - \int_0^t S(t - r)(BB^{\ast} \Pi(r) - F_1)x^{(N)}(r) \, dr 
+ \int_0^t S(t - r) BB^{\ast} q(r) \, dr \notag \\
&+ \frac{1}{N} \left[ \sum_{i \in \mc{N}} \int_0^t S(t - r) p_i^{[N]}(r) \, dW_i(r) \right]
+ \int_0^t S(t - r) \left( (D_0 + F_0)x^{(N)}(r) + \sigma_0 \right) \, dW_0(r) \notag \\
&+ \frac{1}{N} \int_0^t S(t - r) B \left( u_1^{[N]}(r) + B^{\ast} \Pi\, x_1^{[N]}(r) - B^{\ast} q(r) \right) \, dr.\label{average-state-with-deviation}
\end{align}

Now, define \( y(t) := \bar{x}(t) - x^{(N)}(t) \). Then, from \eqref{mfcom} and \eqref{average-state-with-deviation}, we have  
\begin{align}
y(t) =\,& S(t)y(0) - \int_0^t S(t - r)(BB^{\ast} \Pi(r) - F_1)y(r) \, dr 
+ \int_0^t S(t - r)(D_0 + F_0)y(r) \, dW_0(r) \notag \\
& - \frac{1}{N} \left[ \sum_{i \in \mc{N}} \int_0^t S(t - r) p_i^{[N]}(r) \, dW_i(r) \right] 
- \frac{1}{N} \int_0^t S(t - r) B \left( u_1^{[N]}(r) + B^{\ast}\Pi\, x_1^{[N]}(r) - B^{\ast} q(r) \right) dr.
\end{align}
Subsequently, we obtain the  estimate given by 
\begin{align} \label{procecom}
\mathbb{E} \left| y(t) \right|^2 
\leq\, & \frac{C}{N}+C \int_0^t \mathbb{E} \left| y(r) \right|^2 \, dr 
+ \frac{C}{N^2}  \mathbb{E} \left| \sum_{i \in \mc{N}} \int_0^t S(t - r) p_i^{[N]}(r) \, dW_i(r) \right|^2 \notag \\
&+ \frac{C}{N^2} \left( \int_0^t \mathbb{E} \left| B \left( u_1^{[N]}(r) + B^{\ast} x_1^{[N]}(r) - B^{\ast} q(r) \right) \right|^2 \, dr \right).
\end{align}
Further, we estimate the stochastic fluctuation term as in
\begin{align}
\mathbb{E} \left| \sum_{i \in \mc{N}} \int_0^t S(t - r) p_i^{[N]}(r) \, dW_i(r) \right|^2 
&= \sum_{i \in \mc{N}} \mathbb{E} \left| \int_0^t S(t - r) \left( D x_i^{[N]}(r) + F_2 x^{(N)}(r) + \sigma \right) \, dW_i(r) \right|^2 \notag \\
&\leq C \left( \int_0^t \sum_{i \in \mc{N}} \mathbb{E} \left| x_i^{[N]}(r) \right|^2 \, dr 
+ N \int_0^t \mathbb{E} \left| x^{(N)}(r) \right|^2 \, dr + N \right) \notag \\
&\leq C N,
\end{align}
where the second inequality follows from \eqref{prop-boundednesscom} and \eqref{boundcom}. Moreover, we have
\begin{align}
\int_{0}^{t}\mathbb{E}\left|B\Big(u_1^{[N]}(r)+B^{\ast}x^{[N]}_{1}(r)-B^{\ast}q(r)\Big)\right| ^2dr \leq C\int_{0}^{t}(\mathbb{E}\left|x_{1}^{[N]}(r)\right|^{2}+1)dr \leq C, \label{term4com}
\end{align}
where the first inequality follows from \eqref{enbound} and the properties of $q$ as shown in \cite{hu1991adapted, guatteri2005backward}.  Subsequently, from \eqref{procecom}--\eqref{term4com}, we obtain 
\begin{equation}
\mathbb{E}\left|y(t) \right|^{2} \leq C(\frac{1}{N}+\frac{1}{N^2})+ C\int_{0}^{t}\mathbb{E}\left|y(r)\right|^{2}dr.
\end{equation}
Finally, by applying Grönwall's inequality, \eqref{convergencecom} follows.
\end{proof}
\begin{proposition}[Error Bounds for Agent $i=1$)] \label{cconatmcom}
Let $x_1(t)$ and $x_1^{[N]}(t)$, respectively, denote the state of agent $i=1$ in the limiting game and the $N$-player game satisfying \eqref{dylimi-S0com} and \eqref{fini1com}. Moreover, let $J^{\infty }(.)$ and $J^{[N]}(\,.\,,u^{[N],\circ}_{-1})$, respectively, denote the cost functional of agent $i=1$ in the limiting game and the $N$-player game given by \eqref{cosfinlimi-S0com} and \eqref{cost-epscom}. 
\begin{itemize}
    \item[(i)] If agent $i=1$ employs the strategies $u^\circ_1$ and $u^{[N],\circ}_1$, given by \eqref{nashcom} and \eqref{equil-strategy-N-player}, in the limiting and the $N$-player game, respectively, then we have
\begin{gather}
\displaystyle \sup_{t\in \mfT}\mathbb{E}\left|x_1^{\circ}(t)-{x}_1^{[N],\circ}(t)\right|_{H}^{2} \leq \frac{C}{N},\label{state-conv-1com}\allowdisplaybreaks\\  
\left|J^{\infty }_1(u^{\circ}_{1})-J^{[N]}_1(u^{[N],\circ }_{1},u^{[N],\circ}_{-1}) \right|   \leq \frac{C}{\sqrt{N}}.\label{cost-conv-1com} 
\end{gather}
\item[(ii)] If agent $i=1$ employs any $u_1^{[N]} \in \mc{A}^{[N],0}$ in both the limiting and the $N$-player games, then we have 
\begin{gather}\label{state-conv-2com}
\displaystyle \sup_{t\in \mfT}\mathbb{E}\left|x_{1}(t)-x_{1}^{[N]}(t)\right|_{H}^{2} \leq \frac{C}{N},\allowdisplaybreaks\\
\left|J^{\infty }_1(u_1^{[N]})-J^{[N]}_1(u_1^{[N]},u^{[N],\circ}_{-1}) \right|   \leq \frac{C}{\sqrt{N}}.\label{cost-conv-2com}
\end{gather}
\end{itemize}
\end{proposition}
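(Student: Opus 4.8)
The plan is the standard $\varepsilon$-Nash comparison in two stages: first a pathwise $L^2$-estimate showing that agent $1$'s state in the $N$-player game stays $O(1/\sqrt N)$-close to its limiting counterpart, and then transferring this to the costs by expanding the quadratic terms. Both state-convergence bounds \eqref{state-conv-1com} and \eqref{state-conv-2com} will reduce, after subtracting the relevant state equations, to a Grönwall estimate for a linear mild stochastic evolution equation whose inhomogeneity is controlled by the mean-field error $\mathbb{E}|\bar x(t)-x^{(N)}(t)|^2$, already bounded by $C/N$ in \Cref{conatmcom}. Throughout I would use $\|S(t)\|_{\mathcal L(H)}\le M_T$, $\|\Pi(t)\|_{\mathcal L(H)}\le\mathcal C_\Pi(T)$, boundedness of the model operators, the Itô isometry for stochastic convolutions, and the a priori bounds \eqref{boundcom}, \eqref{boundcom-ind}, \eqref{enbound} together with \Cref{prior}.

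For case (ii) put $z(t):=x_1(t)-x_1^{[N]}(t)$. Since agent $1$ employs the \emph{same} control $u_1^{[N]}$ in both \eqref{dylimi-S0com} and \eqref{fini1com}, the control and the $\sigma,\sigma_0$ terms cancel and $z$ solves the mild equation
\begin{align*}
z(t)=&\int_0^t S(t-r)F_1\big(\bar x(r)-x^{(N)}(r)\big)\,dr
+\int_0^t S(t-r)\big(Dz(r)+F_2(\bar x(r)-x^{(N)}(r))\big)\,dW_1(r)\\
&+\int_0^t S(t-r)\big(D_0z(r)+F_0(\bar x(r)-x^{(N)}(r))\big)\,dW_0(r).
\end{align*}
Taking $\mathbb{E}|z(t)|^2$ and applying the triangle/Itô estimates gives $\mathbb{E}|z(t)|^2\le C\int_0^t\mathbb{E}|z(r)|^2\,dr+C\int_0^t\mathbb{E}|\bar x(r)-x^{(N)}(r)|^2\,dr\le C\int_0^t\mathbb{E}|z(r)|^2\,dr+C/N$, where the last step invokes \Cref{conatmcom} (applicable since $u_1^{[N]}\in\mathcal A^{[N],0}$). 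Grönwall's inequality then yields $\mathbb{E}|z(t)|^2\le C/N$ uniformly in $t\in\mfT$, which is \eqref{state-conv-2com}. Case (i) is identical except that, comparing $u_1^\circ$ in \eqref{eqstatecom} with $u_1^{[N],\circ}(r)=-B^\ast(\Pi(r)x_1^{[N],\circ}(r)-q(r))$ in \eqref{finiicom}, an extra drift term $-\int_0^tS(t-r)BB^\ast\Pi(r)z(r)\,dr$ appears, harmless after using the bound on $\Pi$; the relevant mean-field error $\mathbb{E}|\bar x(r)-x^{(N),\circ}(r)|^2\le C/N$ is again given by \Cref{conatmcom}, since $u_1^{[N],\circ}$ realizes equality in the definition of $\mathcal A^{[N],0}$. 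This establishes \eqref{state-conv-1com}.

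For the cost estimates, write the limiting ``cost state'' $a(t):=x_1(t)-\widehat F_1\bar x(t)$ and the $N$-player one $b(t):=x_1^{[N]}(t)-\widehat F_1 x^{(N)}(t)$ (with $\widehat F_2$ at $t=T$) for case (ii). Using $|M^{1/2}a|^2-|M^{1/2}b|^2=\langle M(a-b),a+b\rangle$, Cauchy--Schwarz, and cancellation of the identical $|u_1^{[N]}|^2$ terms, one obtains
\[
\big|J^\infty_1(u_1^{[N]})-J^{[N]}_1(u_1^{[N]},u^{[N],\circ}_{-1})\big|\le\|M\|\Big(\mathbb{E}\!\int_0^T|a-b|^2\Big)^{1/2}\!\Big(\mathbb{E}\!\int_0^T|a+b|^2\Big)^{1/2}+\|G\|\big(\mathbb{E}|a(T)-b(T)|^2\big)^{1/2}\!\big(\mathbb{E}|a(T)+b(T)|^2\big)^{1/2}.
\]
The ``sum'' factors are $O(1)$ by the a priori bounds \eqref{boundcom}, \eqref{boundcom-ind}, $\mathbb{E}|\bar x(t)|^2\le C$, and $\mathbb{E}|x_1(t)|^2\le C$ (the last from a \Cref{prior}-type estimate on \eqref{dylimi-S0com} using $\mathbb{E}\int_0^T|u_1^{[N]}|^2\le C$ from \eqref{enbound}); the ``difference'' factors satisfy $|a-b|^2\le 2|x_1-x_1^{[N]}|^2+2\|\widehat F_1\|^2|\bar x-x^{(N)}|^2\le C/N$ by \eqref{state-conv-2com} and \Cref{conatmcom}. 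Hence \eqref{cost-conv-2com}. Case (i) is the same, with the extra point that the control costs no longer cancel, but $u_1^\circ-u_1^{[N],\circ}=-B^\ast\Pi(\cdot)z$, so $\big|\mathbb{E}\int_0^T(|u_1^\circ|^2-|u_1^{[N],\circ}|^2)\big|\le\|B\|\mathcal C_\Pi(T)\big(\mathbb{E}\int_0^T|z|^2\big)^{1/2}\big(\mathbb{E}\int_0^T|u_1^\circ+u_1^{[N],\circ}|^2\big)^{1/2}=O(1/\sqrt N)$ by \eqref{state-conv-1com}, giving \eqref{cost-conv-1com}.

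Everything here is routine once \Cref{conatmcom} is in hand. The main obstacle is not analytical but bookkeeping: keeping straight which control and which empirical average enter each of the state equations \eqref{dylimi-S0com}, \eqref{eqstatecom}, \eqref{fini1com}, \eqref{finiicom}, and making sure every inhomogeneous term—including those built from $q$, whose uniform $L^2$-bound comes from \cite{hu1991adapted,guatteri2005backward}—is absorbed by the a priori estimates. It is also worth noting that the loss from the $O(1/N)$ state rate to the $O(1/\sqrt N)$ cost rate is intrinsic, arising from the single Cauchy--Schwarz split applied to the quadratic cost.
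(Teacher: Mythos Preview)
Your proof is correct and follows essentially the same approach as the paper: subtract the state equations, control the inhomogeneity by the mean-field error from \Cref{conatmcom}, apply Grönwall for \eqref{state-conv-1com}--\eqref{state-conv-2com}, and then split the quadratic cost difference using Cauchy--Schwarz together with the a priori bounds. The only cosmetic difference is that for the cost you use the polarization identity $|M^{1/2}a|^2-|M^{1/2}b|^2=\langle M(a-b),a+b\rangle$ and then Cauchy--Schwarz in $L^2(\Omega\times\mfT)$, whereas the paper uses the pointwise inequality $\big||a|^2-|b|^2\big|\le|a-b|^2+2|a|\,|a-b|$ before taking expectations; both yield the same $O(1/\sqrt N)$ rate, and your version is arguably tidier.
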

\begin{proof}
For the case where agent $i =1$ employs strategies $u^\circ_1$ and $u^{[N],\circ}_1$, given by \eqref{nashcom} and \eqref{equil-strategy-N-player}, in the limiting and the $N$-player game, respectively, by direct computation we have 
\begin{align}
x_1^{\circ}(t)-x_{1}^{[N],\circ}(t)&=- \int_{0}^{t}S(t-r)BB^{\ast}\Pi(r)(x_1^{\circ}(r)-x_{1}^{[N],\circ}(r))dr\,+\int_{0}^{t}S(t-r)F_1\left (\bar{x}(r)-{x}^{(N),\circ}(r) \right )dr\notag\allowdisplaybreaks\\
& +\int_{0}^{t}S(t-r)\Big(D(x_1^{\circ}(r)-{x}_1^{[N],\circ}(r))+F_2(\bar{x}(r)-{x}^{(N),\circ}(r) )\Big)dW_1(r)\notag\allowdisplaybreaks\\ &\,+\int_{0}^{t}S(t-r)\Big(D_0(x_1^{\circ}(r)-{x}_1^{[N],\circ}(r))+F_0(\bar{x}(r)-{x}^{(N),\circ}(r) )\Big)dW_0(r).
\end{align}
Moreover, for the case where agent $i=1$ employs an arbitrary strategy $u_1^{[N]} \in \mc{A}^{[N],0}$ in both the limiting and the $N$-player games, we have
\begin{align}
x_{1}(t)-x^{[N]}_1(t)=&\int_{0}^{t}S(t-r)F_1\left (\bar{x}(r)-x^{(N)}(r)  \right )dr\notag\allowdisplaybreaks\\
& +\int_{0}^{t}S(t-r)\Big(D(x_1(r)-{x}_1^{[N]}(r))+F_2(\bar{x}(r)-{x}^{(N)}(r) )\Big)dW_1(r)\notag\allowdisplaybreaks\\ &\,+\int_{0}^{t}S(t-r)\Big(D_0(x_1(r)-{x}_1^{[N]}(r))+F_0(\bar{x}(r)-{x}^{(N)}(r) )\Big)dW_0(r).  \end{align}
The remainder of the proof of \eqref{state-conv-1com} and \eqref{state-conv-2com} follows from computing norms and leveraging the Cauchy-Schwarz inequality and the results from \Cref{conatmcom}, and applying Grönwall’s inequality, similar to the steps used in the proof of \Cref{conatmcom}.

For the property \eqref{cost-conv-1com}, we have
\begin{align}
 \left|J^{\infty }_1(u^{\circ }_{1})-J^{[N]}_1(u^{[N],\circ }_{1},u^{[N],\circ}_{-1}) \right| \leq& \mathbb{E}\int_{0}^{T}\left| \left|M^{ \frac{1}{2}}(x^{\circ}_1(t)-\bar{x}(t)) \right|^2 - \left|M^{ \frac{1}{2}}(x_{1}^{[N],\circ}(t)-x^{(N),\circ}(t)) \right|^2\right|dt \notag \allowdisplaybreaks\\
& + \mathbb{E}\left|\left|G^{ \frac{1}{2}}(x^{\circ}_1(T)-\bar{x}(T)) \right|^2 - \left|G^{ \frac{1}{2}}\left(x_{1}^{[N],\circ}(T)-x^{(N),\circ}(T)\right) \right|^2\right|\nonumber\\
&+\mathbb{E}\int_{0}^{T}\left|\left|B^{\ast}\left( \Pi(t)\, x^{\circ}_1(t)- q(t)\right) \right|^2 - \left|B^{\ast}\left( \Pi(t)\, x^{[N],\circ}_1(t)- q(t)\right) \right|^2\right|dt.
\end{align}
Moreover, for $t \in [0,T)$, we have
\begin{align}   
 \bigg \vert \left|M^{ \frac{1}{2}}(x^{\circ}_1(t)-\bar{x}(t)) \right|^2-&\left|M^{ \frac{1}{2}}(x_{1}^{[N],\circ}(t)-x^{(N),\circ}(t)) \right|^2 \bigg\vert \leq 2\left|M^{ \frac{1}{2}}(x^{\circ}_1(t)-x_{1}^{[N],\circ}(t)) \right|^2\notag\\ & +2\left|M^{ \frac{1}{2}}(\bar{x}(t)-x^{(N),\circ}(t)) \right|^2\notag + 2\left|M^{ \frac{1}{2}}(x^{\circ}_1(t)-\bar{x}(t))\right|\allowdisplaybreaks\\ 
&\,\,\,\, \times \left (2\left|M^{ \frac{1}{2}}(x^{\circ}_1(t)-x_{1}^{[N],\circ}(t)) \right|^2+2\left|M^{ \frac{1}{2}}(\bar{x}(t)-x^{(N),\circ}(t)) \right|^2  \right )^{\frac{1}{2}}.
\end{align}
We apply the same method to the terminal cost and use \eqref{state-conv-1com}, \eqref{convergencecom} and \eqref{boundxi} to establish the estimate \eqref{cost-conv-1com}. Moreover, for any admissible strategy $u_1^{[N]} \in \mc{A}^{[N],0}$ we have 
\begin{align}
 \left|J^{\infty}_1(u_{1}^{[N]})-J^{[N]}_1(u_{1}^{[N]},u^{[N],\circ}_{-1}) \right| \leq& \mathbb{E}\int_{0}^{T}\left| \left|M^{ \frac{1}{2}}\big(x_1(t)-\bar{x}(t)\big) \right|^2 - \left|M^{ \frac{1}{2}}\Big(x_{1}^{[N]}(t)-x^{(N)}(t)\Big) \right|^2\right|dt \notag \allowdisplaybreaks\\
&+ \mathbb{E}\left|\left|G^{ \frac{1}{2}}\big(x_1(T)-\bar{x}(T)\big) \right|^2 - \left|G^{ \frac{1}{2}}\Big(x_{1}^{[N]}(T)-x^{(N)}(T)\Big) \right|^2\right|.
\end{align}
The proof of \eqref{cost-conv-2com} follows by applying the same steps as in the proof of \eqref{cost-conv-1com} to the above inequality.
\end{proof}
\begin{theorem} ($\epsilon$-Nash Equilibrium)\label{E-Nash-thm} Suppose that \Cref{init-cond-iid}, \Cref{clcom} and the contraction condition  \eqref{contrcom} hold. Then, the set of strategies $\{u^{\circ}_i\}_{i \in \mc{N}}$, where  $u^{\circ}_i$ is given by \eqref{nashcom}, forms an $\epsilon$-Nash equilibrium for the $N$-player game described by \eqref{statecom}-\eqref{cosfinNcom}; that is, 
\begin{equation} 
J^{[N]}_1(u^{[N],\circ}_{1},u^{[N],\circ}_{-1}) \leq  \inf_{u_1^{[N]}\in \mc A^{[N],0}} J^{[N]}_1(u^{[N]}_1,u^{[N],\circ}_{-1})+\epsilon_N,
\end{equation} 
where $\epsilon_N =O(\tfrac{1}{\sqrt{N}})$.
\end{theorem}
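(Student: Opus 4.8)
The plan is to derive the $\epsilon$-Nash inequality by the standard three-step comparison between the $N$-player cost of agent $1$ and its limiting-game cost, feeding in the convergence estimates already established in \Cref{cconatmcom}. As noted just before the statement, the infimum in \eqref{epnashcom1} is taken over the non-empty restricted class $\mc{A}^{[N],0}$, so it suffices to fix an arbitrary deviation $u_1^{[N]}\in\mc{A}^{[N],0}$ and bound the gap. First I would apply part (i) of \Cref{cconatmcom}, namely \eqref{cost-conv-1com}, to obtain
\[
J^{[N]}_1\big(u^{[N],\circ}_{1},u^{[N],\circ}_{-1}\big)\ \le\ J^{\infty}_1\big(u^{\circ}_{1}\big)+\frac{C}{\sqrt{N}},
\]
which transfers the question to the limiting problem.

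The crucial middle step is the optimality of $u^{\circ}_1$ for the limiting control problem against the chosen deviation: $J^{\infty}_1(u^{\circ}_{1})\le J^{\infty}_1(u_1^{[N]})$. The point requiring care is that $u_1^{[N]}$ is only $\mc{F}^{[N],0}$-adapted, hence a priori not admissible in the sense of \Cref{Info-set-Adm-Cntrl-CN}; nonetheless the inequality persists because enlarging agent $1$'s information set cannot lower the optimal limiting cost. Concretely, under \Cref{init-cond-iid} the processes $W_1$ and $W_0$ remain Wiener processes with respect to $\mc{F}^{[N],0}$ and the pair $(q,\widetilde q)$ is $\mc{F}^{0}$-adapted with $\mc{F}^{0}\subseteq\mc{F}^{[N],0}$, so the completion-of-squares identity underlying \eqref{costop} (applied with $g=\bar x$) remains valid for any such deviation, giving $J^{\infty}_1(u)=J^{\infty}_1(u^{\circ}_1)+\mathbb{E}\int_0^T|u(t)+B^{\ast}\Pi(t)x(t)-B^{\ast}q(t)|^2\,dt\ge J^{\infty}_1(u^{\circ}_1)$. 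Combining this with part (ii) of \Cref{cconatmcom}, i.e. \eqref{cost-conv-2com}, yields $J^{\infty}_1(u_1^{[N]})\le J^{[N]}_1(u_1^{[N]},u^{[N],\circ}_{-1})+C/\sqrt{N}$, and chaining the three bounds gives
\[
J^{[N]}_1\big(u^{[N],\circ}_{1},u^{[N],\circ}_{-1}\big)\ \le\ J^{[N]}_1\big(u_1^{[N]},u^{[N],\circ}_{-1}\big)+\frac{2C}{\sqrt{N}}.
\]
Taking the infimum over $u_1^{[N]}\in\mc{A}^{[N],0}$ then proves the claim with $\epsilon_N=2C/\sqrt{N}=O(1/\sqrt{N})$.

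Almost all of the analytic work has already been discharged by the earlier results: the uniform second-moment bounds of \Cref{thm:lemmasumcom}, valid uniformly in $N$ and in the deviation $u_1^{[N]}\in\mc{A}^{[N],0}$; the $O(1/N)$ average-state estimate $\sup_{t\in\mfT}\mathbb{E}|\bar x(t)-x^{(N)}(t)|_H^2\le C/N$ of \Cref{conatmcom}; and the state/cost comparison bounds of \Cref{cconatmcom}. Consequently the only genuinely new ingredient in this final proof is the filtration-enlargement argument in the middle step—verifying that competing against $\mc{F}^{[N],0}$-adapted deviations does not beat $u^{\circ}_1$ in the limiting problem—and I expect that step, together with checking that the constant $C$ appearing in \Cref{cconatmcom} is truly independent of both $N$ and the particular admissible deviation, to be the main point needing attention; the remainder is bookkeeping.
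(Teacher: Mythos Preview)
Your proposal is correct and follows essentially the same three-step chain as the paper's proof: use \eqref{cost-conv-1com} to pass from the $N$-player equilibrium cost to the limiting optimal cost, invoke the optimality inequality $J^{\infty}_1(u^{\circ}_1)\le J^{\infty}_1(u_1^{[N]})$, and then use \eqref{cost-conv-2com} to return to the $N$-player cost, yielding the gap $2C/\sqrt{N}$. The only difference is that you supply an explicit justification for the middle step via the completion-of-squares identity under the enlarged filtration, whereas the paper simply asserts \eqref{ineinfcom} without further comment; your added care on this filtration point is appropriate but does not change the argument.
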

\begin{proof}
We recall that
\begin{equation} \label{ineinfcom}
 J^{\infty }_1(u^{\circ}_1) \leq J^{\infty }_1(u_1^{[N]}),\quad \forall\, u_1^{[N]} \in\mc{A}^{[N],0}.
\end{equation}
Then, from \eqref{cost-conv-1com}, \eqref{cost-conv-2com} and \eqref{ineinfcom}, we have
\begin{equation}
J^{[N]}_1(u^{[N],\circ }_{1},u^{[N],\circ}_{-1})-\frac{C}{\sqrt{N}} \leq  J^{\infty}_1(u^{\circ}_1) \leq     J^{\infty}_1(u_1^{[N]}) \leq J^{[N]}_1(u_1^{[N]},u^{[N],\circ}_{-1})+\frac{C}{\sqrt{N}},\quad \forall\, u_1^{[N]} \in\mc{A}^{[N],0},
\end{equation}
which gives $J^{[N]}_1(u^{[N],\circ }_{1},u^{[N],\circ}_{-1}) \leq J^{[N]}_1(u_1^{[N]},u^{[N],\circ}_{-1})+2\frac{C}{\sqrt{N}},$  $\forall u_1^{[N]} \in\mc{A}^{[N],0}$.
\end{proof}

\section{Fixed-Point Problem: Existence and Uniqueness for an Arbitrary Finite Time Horizon} \label{arbit-time} 
In this section, we propose a method to solve the fixed-point problem described in \Cref{sec:fixed-point-small-time} for an arbitrary finite time horizon for the case where the diffusion term associated with the common noise in the dynamics of the representative agent, given by \eqref{dylimi-S0com}, is deterministic. This is formalized in the following assumption. 
\begin{assumption}\label{det-diff}
In the \( N \)-player game model \eqref{statecom}–\eqref{cosfinNcom}, and hence in the corresponding limiting model \eqref{dylimi-S0com}–\eqref{cosfinlimi-S0com} as well as the mean-field consistency equations \eqref{MF-eq-1}–\eqref{mfcom}, the coefficients of the diffusion term associated with the common noise satisfy the conditions that (i) $D_0=0$ and (ii) $F_0=0$.
\end{assumption} Specifically, under \Cref{det-diff} and for an arbitrary finite time horizon $\mathcal{T}=[0,T]$, we investigate the existence and uniqueness of a fixed-point solution to the mean-field consistency equations given by 
\begin{align} 
& \frac{d}{dt}\left<\Pi(t)x, x\right> = -2\left<\Pi(t)x, Ax\right> +\left<\Pi(t)BB^{\ast}\Pi(t)x, x\right>\notag\\
&\hspace{5cm}-\left<D^{\ast}\Pi(t)Dx, x\right>- \left<Mx, x\right>,\quad  
\Pi(T) = G,\, \,\, x \in \mathcal{D}(A), \label{Riccati-AT} \allowdisplaybreaks\\
 &q(t)=S^{*}(T-t)G\widehat{F}_{2}\bar{x}(T)-\int_{t}^{T}S^{*}(r-t)\Big(\Pi(r)BB^{\ast}q(r)+\Lambda(r)\bar{x}(r)+D^{*}\sigma\Big)dr \notag \allowdisplaybreaks\\
 &\hspace{9.1cm}-\int_{t}^{T}S^{*}(r-t)\widetilde{q}(r)dW_0(r), \label{Offset-AT} \\
 &\bar{x}(t)=S(t)\bar{\xi}-\int_{0}^{t}S(t-r)((BB^{\ast}\Pi(r)-F_1)\bar{x}(r)-BB^{\ast}q(r))dr + \int_0^t S(t-r) \sigma_0 \, dW_0(r),\label{Mean-Field-AT}
\end{align}
where $\Lambda := D^{*}\Pi F_{2}+\Pi F_{1}-M\widehat{F}_{1}$ belongs to $C_{s}(\mathcal{T}; \mathcal{L}(H))$. We first present the existence results, followed by those concerning uniqueness. Finally, we discuss the required assumptions.
\subsection{Existence Results}
Our approach to establishing existence takes inspiration from \cite{Linear-FBSDES-Yong-1999} 
and develops the analysis for coupled stochastic evolution equations in Hilbert spaces, 
where only mild (rather than strong) solutions exist. 
This methodology differs from that of \cite{federico2024linear}, which deals with a class of deterministic evolution equations. Specifically, we establish existence results for a broader class of LQ MFGs that also incorporate common noise, leading to a system of coupled linear forward-backward stochastic evolution equations (FBSEEs) in Hilbert spaces, given by \eqref{Offset-AT}--\eqref{Mean-Field-AT}, through the identification of a decoupling field. To this end, we impose the following assumptions.
\begin{assumption} \label{as61}
The family of operator-valued Riccati equations 
\begin{align} \label{etan}
&\dot\eta_n(t)
\;+\;\eta_n(t)\big(A_n - BB^{*}\Pi(t)\big)
\;+\;\big(A_n^{*} - \Pi(t)BB^{*}\big)\eta_n(t) \;+\;\eta_n(t)F_1
\;-\;\eta_n(t)BB^{*}\eta_n(t)
\;+\;\Lambda(t)
\;=\;0,\nonumber\\
&
\eta_n(T)=-G\,\widehat F_2 ,
\end{align}
where $n\in\mathbb{N}$ and $A_n=An(A-nI)^{-1}$ denotes the Yosida approximation of the operator $A$, admits strict solutions \(\eta_n \in C_{s}\!\big(\mathcal{T};\mathcal{L}(H)\big)\),
with \(\eta_n(t)\) uniformly bounded for all \(t \in \mathcal{T}\) and for all \(n \in \mathbb{N}\).
 \end{assumption} 
\begin{assumption} \label{as62}
There exists \(\eta \in C_{s}\!\big(\mathcal{T};\mathcal{L}(H)\big)\) such that, for every \(t\in\mathcal{T}\), \(\eta_n(t)\) converges strongly to \(\eta(t)\) in \(\mathcal{L}(H)\); that is,
\begin{equation} \label{etaneta}
\left|\eta_n(t)x-\eta(t)x\right|_{H}\to 0,
\qquad \forall x\in H,\quad \forall t\in\mathcal{T},
\quad \text{as } n\to\infty .
\end{equation}
\end{assumption}
 \begin{assumption} \label{as62}
There exists \(\eta \in C_{s}\!\big(\mathcal{T};\mathcal{L}(H)\big)\), such that $\eta_n$ converges to $\eta$ in $C_{s}\!\big(\mathcal{T};\mathcal{L}(H)\big)$; that is,
\begin{equation} \label{etaneta}
\sup_{t\in \mathcal{T}} \left |\eta_n(t)x- \eta(t)x\right |_{H}\to 0, \,\,\, \forall x \in H,\quad \text{as}\quad n \rightarrow \infty.
\end{equation}
 \end{assumption} 
\begin{proposition}
Suppose that  \Cref{as61} and \Cref{as62} hold.
\begin{itemize}
    \item[(i)] \cite{hu1991adapted,guatteri2005backward} The backward stochastic differential equations given by
\begin{align}
&d\varsigma_n(t)
= -\Big( A_n^{*}\varsigma_n(t) - \Pi(t)BB^{*}\varsigma_n(t) 
   - \eta_n(t)BB^{*}\varsigma_n(t) - D^{*}\sigma \Big)\,dt + \;\widetilde\varsigma_n(t)\,dW_0(t), 
\quad \varsigma_n(T) = 0, \label{vsign}\\
&d\varsigma(t)
= -\Big( A^{*}\varsigma(t) - \Pi(t)BB^{*}\varsigma(t) 
   - \eta(t)BB^{*}\varsigma(t) - D^{*}\sigma \Big)\,dt  + \;\widetilde\varsigma(t)\,dW_0(t), 
\qquad\qquad \varsigma(T) = 0, \label{vsig}
\end{align}
admit a unique pair of mild solutions $\big(\varsigma_n, \widetilde\varsigma_n\big), \big(\varsigma, \widetilde\varsigma\big) \in \mathcal{M}_{\mathcal{F}^0}^2(\mathcal{T};H) \times \mathcal{M}_{\mathcal{F}^{0}}^2(\mfT; \mathcal{L}_2(V_{Q_0},H))$, respectively, satisfying the uniform bounds 
\begin{align} \label{vsibound}
&\sup_{t\in\mathcal{T}}\mathbb{E}\,|\varsigma(t)|^{2}
\;+\;\mathbb{E}\!\int_{0}^{T}\!\|\widetilde\varsigma(t)\|_{\mathcal L_{2}}^{2}\,dt
\;\leq\; C, \\[4pt]
&\sup_{t\in\mathcal{T}}\mathbb{E}\,|\varsigma_n(t)|^{2}
\;+\;\mathbb{E}\!\int_{0}^{T}\!\|\widetilde\varsigma_n(t)\|_{\mathcal L_{2}}^{2}\,dt
\;\leq\; C, \qquad \forall\,n \in \mathbb{N}.\label{vsi-n-bound}
\end{align}

\item[(ii)] The following convergence results hold.
\begin{equation} \label{kavp3}
\lim_{n \to \infty } \sup_{t\in\mathcal{T}} \mathbb{E}\,|\varsigma_{n}(t)-\varsigma(t)|^{2}=0, 
\qquad 
\lim_{n \to \infty }\mathbb{E}\int_{0}^{T}\!\|\widetilde{\varsigma}_n(t)-\widetilde{\varsigma}(t)\|^{2}\,dt=0.
\end{equation}
\end{itemize}
\end{proposition}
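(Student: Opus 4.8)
The plan is to treat \eqref{vsign} and \eqref{vsig} as affine backward stochastic evolution equations in the Hilbert space $H$ whose generators are (time-dependent but uniformly bounded) bounded perturbations of the $C_0$-semigroups generated by $A_n$ and $A$, respectively. For part (i), the drift of \eqref{vsig} is $A^{*}\varsigma + \Phi(t)\varsigma - D^{*}\sigma$, where $\Phi(t) := -\Pi(t)BB^{*} - \eta(t)BB^{*}$ is in $C_s(\mathcal{T};\mathcal{L}(H))$ and uniformly bounded thanks to \eqref{eq:pi-boundcom} and \Cref{as61}/\Cref{as62}; the inhomogeneous term $D^{*}\sigma$ is constant in $H$, and the terminal condition is $0$. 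Hence existence and uniqueness of a mild solution $(\varsigma,\widetilde\varsigma) \in \mathcal{M}_{\mathcal{F}^0}^2(\mathcal{T};H)\times \mathcal{M}_{\mathcal{F}^0}^2(\mathcal{T};\mathcal{L}_2(V_{Q_0},H))$ follow from the standard theory of linear backward SEEs (the cited \cite{hu1991adapted,guatteri2005backward}), viewing the $\Phi(t)\varsigma$ term as a Lipschitz driver. The same argument applies verbatim to \eqref{vsign} with $A$ replaced by its Yosida approximation $A_n$ and $\Phi$ by $\Phi_n(t):=-\Pi(t)BB^{*}-\eta_n(t)BB^{*}$; the key point is that the Lipschitz constant of the driver, $\sup_{t}\|\Phi_n(t)\|_{\mathcal{L}(H)}$, is bounded \emph{uniformly in $n$} by \Cref{as61}, and the semigroup bound $\|e^{tA_n}\|_{\mathcal{L}(H)}\leq M_T$ holds uniformly in $n$ as well.

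**Uniform bounds.** For \eqref{vsibound} and \eqref{vsi-n-bound}, I would apply It\^o's formula to $|\varsigma_n(t)|^2$ (using the Yosida approximation so that the solution is strong, exactly as in the proof of \Cref{thm:opt-cntrl-limiting-common-noise}), take expectations, and use the Burkholder–Davis–Gundy and Young inequalities to absorb cross terms. This yields an estimate of the form $\sup_{t}\mathbb{E}|\varsigma_n(t)|^2 + \mathbb{E}\int_0^T\|\widetilde\varsigma_n(t)\|_{\mathcal{L}_2}^2\,dt \leq C\big(1 + \int_0^T \sup_{s\leq r}\mathbb{E}|\varsigma_n(s)|^2\,dr\big)$, where $C$ depends on $M_T$, $T$, $\|D^{*}\sigma\|$, and the uniform bound on $\|\Phi_n\|$ — and is therefore independent of $n$. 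Gr\"onwall's inequality then gives \eqref{vsi-n-bound}, and passing to the limit (or running the identical argument for \eqref{vsig}) gives \eqref{vsibound}.

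**Convergence.** For part (ii), set $\Delta_n := \varsigma_n - \varsigma$ and $\widetilde\Delta_n := \widetilde\varsigma_n - \widetilde\varsigma$. Subtracting the mild formulations, $\Delta_n$ satisfies a backward SEE whose source terms are of two types: (a) the difference of semigroups, $\big(e^{(r-t)A_n^{*}} - e^{(r-t)A^{*}}\big)$ acting on $\Pi(r)BB^{*}\varsigma(r)+\eta(r)BB^{*}\varsigma(r)+D^{*}\sigma$, together with a corresponding terminal-type correction, which converges to $0$ by the strong convergence $e^{tA_n}\to e^{tA}$ on $H$ (uniformly on $\mathcal{T}$ by Trotter–Kato, using the uniform bound $\|e^{tA_n}\|\le M_T$) combined with the uniform bound \eqref{vsibound} and dominated convergence; and (b) the term $(\eta_n(r)-\eta(r))BB^{*}\varsigma_n(r)$, which converges to $0$ in $L^2$ by \eqref{etaneta}, the uniform boundedness of $\|\eta_n\|$, and the uniform bound \eqref{vsi-n-bound}. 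Denoting the $L^2$-norm of the aggregate source by $\rho_n\to 0$, an It\^o-plus-Gr\"onwall estimate (again via Yosida approximation) of the form $\sup_t\mathbb{E}|\Delta_n(t)|^2 + \mathbb{E}\int_0^T\|\widetilde\Delta_n\|_{\mathcal{L}_2}^2\,dt \leq C\big(\rho_n + \int_0^T \sup_{s\le r}\mathbb{E}|\Delta_n(s)|^2\,dr\big)$ yields \eqref{kavp3}.

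**Main obstacle.** The routine parts are the It\^o/Gr\"onwall estimates; the genuinely delicate step is handling the mismatch between the two distinct semigroups $e^{tA_n}$ and $e^{tA}$ inside the mild formulations — one must be careful that the stochastic convolution $\int_0^\cdot e^{(r-\cdot)A_n^{*}}\widetilde\varsigma_n(r)\,dW_0(r)$ and its limit are compared correctly, and that the difference $\big(e^{(r-t)A_n^{*}}-e^{(r-t)A^{*}}\big)$, which is only \emph{strongly} convergent and not norm-convergent, is always applied to a fixed convergent family of integrands so that dominated convergence (with the uniform $M_T$ bound as envelope) applies. This is exactly the kind of care that distinguishes the mild-solution setting treated here from the deterministic-evolution-equation approach of \cite{federico2024linear}.
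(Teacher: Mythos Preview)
Your treatment of part (i) matches the paper's. For part (ii) you take a direct route, subtracting the two mild formulations and collecting the semigroup mismatch and the coefficient mismatch as source terms in a single Gr\"onwall estimate. The paper instead interposes an auxiliary sequence $\kappa_n$, defined as the mild solution of the backward equation with the \emph{approximating} semigroup $S_n^{*}$ but the \emph{limiting} coefficients $\eta$ and $\varsigma$; then $\kappa_n\to\varsigma$ is exactly the standard Yosida-approximation convergence for BSEEs (invoking \cite[Thm.~4.4]{guatteri2005backward} as a black box), while $\varsigma_n-\kappa_n$ satisfies a BSEE with a \emph{single} semigroup $S_n^{*}$ and sole source $(\eta_n-\eta)BB^{*}\varsigma$, to which \cite[Lem.~2.1]{hu1991adapted} applies directly. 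The payoff of the paper's decomposition is that it sidesteps precisely the obstacle you flag: the difference $\Delta_n=\varsigma_n-\varsigma$ does not itself satisfy a BSEE in standard form (one semigroup), so the Hu--Peng estimate and the It\^o step you invoke are not immediately available---you would in effect have to introduce a strong approximation of $\varsigma$ anyway, which is what $\kappa_n$ is. One further slip: your source (b) should be $(\eta_n-\eta)BB^{*}\varsigma$, not $(\eta_n-\eta)BB^{*}\varsigma_n$; with the fixed $\varsigma$ the convergence follows cleanly from \eqref{etaneta} and dominated convergence, whereas with the varying $\varsigma_n$ you would need either $\varsigma_n\to\varsigma$ (circular) or a further splitting.
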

\begin{proof}
The first part of the proposition concerns standard results on the existence, uniqueness, and boundedness of solutions to backward stochastic evolution equations in Hilbert spaces \cite{hu1991adapted,guatteri2005backward}, as already applied to equations~\eqref{qstcom} and~\eqref{qnstcom}. 

For the second part, we first introduce another sequence of backward stochastic equations given by
\begin{equation} \label{kappan}
\kappa_n(t)
= - \int_{t}^{T} S_n^{*}(r-t)\Big(
   \Pi(r)BB^{*}\kappa_n(r)
   + \eta(r)BB^{*}\varsigma(r)
   + D^{\ast} \sigma \Big)\,dr  - \int_{t}^{T} S_n^{*}(r-t)\,\widetilde{\kappa}_n(r)\,dW_0(r),
\end{equation}
where $t \in \mathcal{T}$. Under \Cref{as62} and due to the well-posedness of $\varsigma$ as the solution to \eqref{vsig}, equation \eqref{kappan} admits a unique mild (strong) solution $\big(\kappa_n,
\widetilde\kappa_n \big)\in \mathcal{M}^{2}_{\mathcal{F}^0}(\mathcal{T};H) \times  \mathcal{M}_{\mathcal{F}^{0}}^2(\mfT; \mathcal{L}_2(V_{Q_0},H))$. Moreover, since \eqref{kappan} represents the Yosida approximation sequence of \eqref{vsig} (see e.g. \cite{guatteri2005backward}), 
by the same reasoning as in \eqref{qstcom} and \eqref{qnstcom}, we obtain
\begin{equation} \label{kavp}
\lim_{n \to \infty } \sup_{t\in\mathcal{T}} \mathbb{E}\,|\kappa_{n}(t)-\varsigma(t)|^{2} = 0, 
\qquad 
\lim_{n \to \infty } \mathbb{E}\!\int_{0}^{T}\!\|\widetilde{\kappa}_n(t)-\widetilde{\varsigma}(t)\|^{2}\,dt = 0.
\end{equation}
Now, we observe that the difference sequence $\varsigma_n(t) - \kappa_n(t)$ satisfies
\begin{align}
\varsigma_n(t)-\kappa_n(t)
&= - \int_{t}^{T} S_n^{*}(r-t)\Big(
   \Pi(r)BB^{*}\big(\varsigma_n(r)-\kappa_n(r)\big)
   + \big(\eta_n(r)-\eta(r)\big)BB^{*}\varsigma(r) \Big)\,dr \nonumber \\
&\hspace{6.89cm} - \int_{t}^{T} S_n^{*}(r-t)\,\big(\widetilde{\varsigma}_n(r)-\widetilde{\kappa}_n(r)\big)\,dW_0(r),
\end{align}
for all $t\in\mathcal{T}$. By \cite[Lemma~2.1]{hu1991adapted}, we have 
\begin{align}
&\mathbb{E}\!\left|\varsigma_n(t)-\kappa_n(t)\right|^2 
\leq 2 M_T^{2} T\,\mathbb{E}\!\int_{t}^{T}
\Big|
\Pi(r)BB^{*}\big(\varsigma_n(r)-\kappa_n(r)\big)
+ \big(\eta_n(r)-\eta(r)\big)BB^{*}\varsigma(r)
\Big|^{2}dr, \label{knvn} \\[6pt]
&\mathbb{E}\!\int_{0}^{T}\!\|\widetilde{\kappa}_n(t)-\widetilde{\varsigma}_n(t)\|^{2}\,dt 
\leq 8 M_T^{2} T\,\mathbb{E}\!\int_{0}^{T}
\Big|
\Pi(r)BB^{*}\big(\varsigma_n(r)-\kappa_n(r)\big)
+ \big(\eta_n(r)-\eta(r)\big)BB^{*}\varsigma(r)
\Big|^{2}dr. \label{whknvn}
\end{align}
From \eqref{knvn}, we further obtain 
\begin{equation}
\mathbb{E}\!\left|\varsigma_n(t)-\kappa_n(t)\right|^{2}
\leq 
C\,\mathbb{E}\!\int_{0}^{T}
\Big|
\big(\eta_n(r)-\eta(r)\big)BB^{*}\varsigma(r)
\Big|^{2}dr
+ 
C\,\mathbb{E}\!\int_{t}^{T}
\left|\varsigma_n(r)-\kappa_n(r)\right|^{2}dr,
\end{equation}
which, after applying Grönwall’s inequality, yields
\begin{equation}
\sup_{t\in\mathcal{T}} 
\mathbb{E}\!\left|\varsigma_n(t)-\kappa_n(t)\right|^{2}
\leq 
C\,\mathbb{E}\!\int_{0}^{T}
\Big|
\big(\eta_n(r)-\eta(r)\big)BB^{*}\varsigma(r)
\Big|^{2}dr,
\end{equation}
where $C>0$ is a constant independent of $n$. Based on \Cref{as62} and the dominated convergence theorem, we have 
\begin{equation} \label{kavp1}
\lim_{n \to \infty }\mathbb{E}\!\int_{0}^{T}
\Big|
\big(\eta_n(r)-\eta(r)\big)BB^{*}\varsigma(r)
\Big|^{2}dr=0.
\end{equation}
Consequently, we obtain
\begin{equation}
\lim_{n \to \infty }\sup_{t\in\mathcal{T}} 
\mathbb{E}\!\left|\varsigma_n(t)-\kappa_n(t)\right|^{2}=0.
\end{equation}
It follows immediately from \eqref{whknvn} that 
\begin{equation} \label{kavp2}
\lim_{n \to \infty }\mathbb{E}\!\int_{0}^{T}
\|\widetilde{\kappa}_n(t)-\widetilde{\varsigma}_n(t)\|^{2}\,dt=0.
\end{equation}
Finally, \eqref{kavp3} follows by combining \eqref{kavp}, \eqref{kavp1} and \eqref{kavp2}.
\end{proof}
 Now, we investigate the mean-field consistency equations given by \eqref{Riccati-AT}--\eqref{Mean-Field-AT}. Note that \eqref{Riccati-AT} can be solved independently of \eqref{Offset-AT}--\eqref{Mean-Field-AT} over an arbitrary time horizon, as demonstrated in the proof of \Cref{thm:opt-cntrl-limiting-common-noise}. However, the latter equations are coupled both with each other and with \eqref{Riccati-AT}. Hence, in the remainder of this section, we focus on the solution of the FBSEE \eqref{Offset-AT}–\eqref{Mean-Field-AT}. First, we introduce the following Lemma regarding an approximating sequence for these equations.
\begin{lemma} \label{soappr}
Suppose that \Cref{det-diff} and \Cref{as61} hold. Then, for any $T > 0$ and $n\in\mathbb{N}$, the approximating system corresponding to the FBSEE \eqref{Offset-AT}--\eqref{Mean-Field-AT} given by  
\begin{align} 
d\bar x_{n}(t)
&=\Big(A_{n}\bar x_{n}(t) - \big(BB^{*}\Pi(t)-F_1\big)\bar x_{n}(t) + BB^{*}q_{n}(t)\Big)\,dt
   + \sigma_0\,dW_0(t),\,\,\,\,\qquad \bar x_{n}(0) = \bar\xi, \label{barxn} \\
dq_{n}(t)
&=-\Big(A_{n}^{*}q_{n}(t) - \Pi(t)\,BB^{*}\,q_{n}(t) - \Lambda(t)\,\bar x_{n}(t) - D^{*}\sigma\Big)\,dt
   + \widetilde q_{n}(t)\,dW_0(t),\,\,\,\,
q_{n}(T) = G\,\widehat F_2\,\bar x_{n}(T),  \label{apqn}
\end{align}
admits at least one mild (strong) solution  ($\bar{x}_n,  (q_n,\widetilde q_n)$), where  
\begin{equation} \label{qnwqn}
q_n(t):=-\eta_n(t)\,\bar x_n(t)+\varsigma_n(t), 
\qquad 
\widetilde q_n(t):=\widetilde\varsigma_n(t)-\eta_n(t)\,\sigma_0,
\end{equation}
and $\eta_n$ and  $(\varsigma_n,\widetilde\varsigma_n)$  satisfy \eqref{etan} and \eqref{vsign}, respectively.
\end{lemma}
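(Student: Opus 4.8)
The plan is to establish existence by a decoupling-field ansatz, in the spirit of \cite{Linear-FBSDES-Yong-1999}: postulate the affine relation $q_n(t)=-\eta_n(t)\bar x_n(t)+\varsigma_n(t)$ between the backward and forward components, where $\eta_n$ is the strict solution of the operator Riccati equation \eqref{etan} furnished by \Cref{as61} and $(\varsigma_n,\widetilde\varsigma_n)$ is the (mild, hence strong, since $A_n^{\ast}$ is bounded) solution of \eqref{vsign}. Inserting this ansatz, together with the prescribed $\widetilde q_n:=\widetilde\varsigma_n-\eta_n\sigma_0$, into the forward equation \eqref{barxn} turns it into the \emph{closed} linear stochastic evolution equation
\begin{equation*}
d\bar x_n(t)=\Big(\big(A_n-BB^{\ast}\Pi(t)+F_1-BB^{\ast}\eta_n(t)\big)\bar x_n(t)+BB^{\ast}\varsigma_n(t)\Big)\,dt+\sigma_0\,dW_0(t),\qquad \bar x_n(0)=\bar\xi .
\end{equation*}

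First I would observe that this equation has coefficients that are bounded (in $t\in\mfT$): $A_n$ is the Yosida approximation, hence a bounded operator; $\Pi\in C_s(\mfT;\mathcal{L}(H))$ is bounded by \eqref{eq:pi-boundcom}; and $\eta_n$ is uniformly bounded by \Cref{as61}. Since moreover the inhomogeneous term $BB^{\ast}\varsigma_n$ lies in $\mathcal{M}^2_{\mathcal{F}^0}(\mfT;H)$ and $\sigma_0\in\mathcal{L}(V;H)$, standard well-posedness for linear stochastic evolution equations with bounded generator yields a unique solution $\bar x_n\in\mathcal{H}^2_{\mathcal{F}^0}(\mfT;H)$, which, $A_n$ being bounded, is simultaneously a mild and a strong solution. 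Defining then $q_n:=-\eta_n\bar x_n+\varsigma_n$ and $\widetilde q_n:=\widetilde\varsigma_n-\eta_n\sigma_0$ produces processes in $\mathcal{M}^2_{\mathcal{F}^0}(\mfT;H)\times\mathcal{M}^2_{\mathcal{F}^0}(\mfT;\mathcal{L}_2(V_{Q_0},H))$, so it only remains to verify that $(\bar x_n,(q_n,\widetilde q_n))$ solves the backward equation \eqref{apqn}; this exhibits one solution of the approximating system (uniqueness among all solutions is a separate matter, treated later).

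The terminal condition is immediate: $\eta_n(T)=-G\widehat F_2$ and $\varsigma_n(T)=0$ give $q_n(T)=G\widehat F_2\,\bar x_n(T)$. For the dynamics I would apply Itô's product formula to $t\mapsto\eta_n(t)\bar x_n(t)$, which is legitimate without any further approximation because $\eta_n$ is a \emph{strict} (hence differentiable, deterministic) solution of \eqref{etan} and $\bar x_n$ is a strong solution of the closed equation above. Substituting $d\bar x_n$, $d\varsigma_n$ from \eqref{vsign}, and $\dot\eta_n$ from \eqref{etan}, the diffusion part collapses to $\big(-\eta_n\sigma_0+\widetilde\varsigma_n\big)\,dW_0=\widetilde q_n\,dW_0$, while in the drift the coefficient of $\bar x_n$ reduces, thanks to the Riccati identity \eqref{etan}, to $A_n^{\ast}\eta_n-\Pi BB^{\ast}\eta_n+\Lambda$, and the remaining, $\bar x_n$-free, terms combine, using \eqref{vsign}, to $-A_n^{\ast}\varsigma_n+\Pi BB^{\ast}\varsigma_n+D^{\ast}\sigma$. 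Re-expressing the sum through $q_n=-\eta_n\bar x_n+\varsigma_n$, the drift becomes $-\big(A_n^{\ast}q_n-\Pi BB^{\ast}q_n-\Lambda\bar x_n-D^{\ast}\sigma\big)$, which is exactly the drift of \eqref{apqn}. Hence $(\bar x_n,(q_n,\widetilde q_n))$ is a mild (equivalently strong) solution of the approximating system, which proves the Lemma.

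I expect the main obstacle to be the Itô computation together with the attendant algebraic bookkeeping: the operator Riccati equation \eqref{etan} with terminal value $-G\widehat F_2$ and the offset equation \eqref{vsign} are engineered precisely so that the quadratic term $\eta_n BB^{\ast}\eta_n$ and all the cross terms cancel, leaving exactly the affine drift of \eqref{apqn}; keeping track of $A_n$ versus its adjoint $A_n^{\ast}$ and of the signs is where care is required. By contrast, the usual mild-versus-strong subtlety plays no role at fixed $n$, since $A_n$ is bounded; it is deferred to the passage $n\to\infty$, which is handled afterwards via the convergences \eqref{etaneta} and \eqref{kavp3}.
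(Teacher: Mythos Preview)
Your proposal is correct and follows essentially the same approach as the paper: substitute the decoupling ansatz $q_n=-\eta_n\bar x_n+\varsigma_n$ into \eqref{barxn} to obtain a closed, well-posed forward equation for $\bar x_n$, then apply It\^o's formula to $-\eta_n(t)\bar x_n(t)+\varsigma_n(t)$ and use the Riccati identity \eqref{etan} together with \eqref{vsign} to recover \eqref{apqn}. The paper's proof is terser but structurally identical; your additional remarks on why the strong-solution setting (bounded $A_n$) legitimizes the It\^o computation are accurate and helpful.
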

\begin{proof}
Suppose that $\bar{x}_n$ is the unique mild (strong) solution of the stochastic evolution equation 
\begin{equation}\label{MF-sub}
 d\bar x_{n}(t)
 =\Big(A_{n}\bar x_{n}(t) - \big(BB^{*}\Pi(t)-F_1\big)\bar x_{n}(t) - BB^{*}\big(\eta_n(t)\,\bar x_n(t)-\varsigma_n(t)\big)\Big)\,dt
   + \sigma_0\,dW_0(t),
\end{equation}
which is obtained by substituting \eqref{qnwqn} in \eqref{barxn}. Since $\bar x_{n}$ and $\varsigma_n$ are the strong solutions of \eqref{MF-sub} and \eqref{vsign}, respectively, we can apply Itô's formula to 
\begin{equation*}
 q_n(t):=-\eta_n(t)\,\bar x_n(t)+\varsigma_n(t),  
\end{equation*}
and use \eqref{etan} to obtain
\begin{equation}
dq_{n}(t)
=-\Big(A_{n}^{*}q_{n}(t) - \Pi(t)\,BB^{*}\,q_{n}(t) - \Lambda(t)\,\bar x_{n}(t) - D^{*}\sigma\Big)\,dt 
+ \big(\widetilde\varsigma_n(t)-\eta_n(t)\,\sigma_0\big)\,dW_0(t),
\end{equation}
which coincides with \eqref{apqn} when $\widetilde q_n(t)=\widetilde\varsigma_n(t)-\eta_n(t)\,\sigma_0$.
\end{proof}
\begin{theorem} \label{soah}
Suppose that \Cref{det-diff}, \Cref{as61} and \Cref{as62} hold. Then, for any $T > 0$, the FBSEE \eqref{Offset-AT}-\eqref{Mean-Field-AT} admits at least one mild solution.
\end{theorem}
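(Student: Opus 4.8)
The plan is to build a mild solution by passing to the limit in the approximating system \eqref{barxn}--\eqref{apqn} of \Cref{soappr}, using the decoupling-field ansatz $q(t)=-\eta(t)\bar x(t)+\varsigma(t)$, $\widetilde q(t)=\widetilde\varsigma(t)-\eta(t)\sigma_0$, where $\eta$ and $(\varsigma,\widetilde\varsigma)$ are the limits supplied by \Cref{as62} and by \eqref{kavp3}. Concretely, I would first let $\bar x\in C_{\mathcal{F}^0}(\mathcal{T};L^2(\Omega;H))$ be the unique mild solution of the closed-loop mean-field equation obtained by substituting this ansatz into \eqref{Mean-Field-AT}, namely
$d\bar x(t)=\big(A\bar x(t)-(BB^{*}\Pi(t)-F_1)\bar x(t)-BB^{*}(\eta(t)\bar x(t)-\varsigma(t))\big)dt+\sigma_0\,dW_0(t)$, $\bar x(0)=\bar\xi$;
this is well posed by the standard linear theory for stochastic evolution equations since $\eta,\Pi\in C_s(\mathcal{T};\mathcal{L}(H))$ are bounded and $\varsigma\in\mathcal{M}^2_{\mathcal{F}^0}(\mathcal{T};H)$. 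Setting $q:=-\eta\bar x+\varsigma$ and $\widetilde q:=\widetilde\varsigma-\eta\sigma_0$, one checks these lie in $\mathcal{M}^2_{\mathcal{F}^0}(\mathcal{T};H)\times\mathcal{M}^2_{\mathcal{F}^0}(\mathcal{T};\mathcal{L}_2(V_{Q_0},H))$ (and $q$ even has a continuous $L^2(\Omega;H)$-modification), and that $\bar x$ solves \eqref{Mean-Field-AT} with this $q$, because $-BB^{*}(\eta\bar x-\varsigma)=BB^{*}q$. It then remains only to verify that $(q,\widetilde q)$ solves the backward equation \eqref{Offset-AT}.

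Next I would prove $\bar x_n\to\bar x$. By \Cref{as61} the operators $\eta_n$ are bounded uniformly in $n$ and $t$, and by \eqref{vsi-n-bound} so are the $L^2$-norms of $\varsigma_n$; hence the drift of \eqref{MF-sub} has coefficients bounded uniformly in $n$, and Grönwall applied to the mild form of \eqref{MF-sub} gives $\sup_n\sup_{t\in\mathcal{T}}\mathbb{E}|\bar x_n(t)|^2\le C$, whence also $\sup_n\sup_t\mathbb{E}|q_n(t)|^2\le C$ via \eqref{qnwqn}. Comparing the mild forms of \eqref{MF-sub} and of the closed-loop equation for $\bar x$, and using standard estimates for mild solutions together with Grönwall's inequality, I would obtain $\sup_{t\in\mathcal{T}}\mathbb{E}|\bar x_n(t)-\bar x(t)|^2\le e^{CT}\rho_n$, where $\rho_n$ is the sum of the terms \emph{not} containing $\bar x_n-\bar x$ itself, built from $(S_n(t)-S(t))\bar\xi$, $(\eta_n(r)-\eta(r))\bar x(r)$, $\varsigma_n(r)-\varsigma(r)$, $(S_n(\cdot)-S(\cdot))$ applied to the drift of the limit equation, and $(S_n(\cdot)-S(\cdot))\sigma_0$ inside the stochastic integral. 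Each of these vanishes as $n\to\infty$: the $\eta$-term by \eqref{etaneta} and dominated convergence (domination by $C|\bar x(r)|^2$); the $\varsigma$-term by \eqref{kavp3}; and the $S_n$-terms because the Yosida semigroups $S_n$ are uniformly bounded and converge to $S$ strongly, uniformly on $[0,T]$, so dominated convergence applies on $\Omega\times[0,T]$ (and, for the stochastic term, on an orthonormal basis of $V_{Q_0}$ via the Itô isometry and the embedding $\mathcal{L}(V;H)\hookrightarrow\mathcal{L}_2(V_{Q_0};H)$). Thus $\bar x_n\to\bar x$ in $C_{\mathcal{F}^0}(\mathcal{T};L^2(\Omega;H))$; writing $\eta_n\bar x_n-\eta\bar x=\eta_n(\bar x_n-\bar x)+(\eta_n-\eta)\bar x$ and invoking \eqref{etaneta} with dominated convergence gives $q_n\to q$ in the same space, and $\|(\eta_n-\eta)\sigma_0\|_{\mathcal{L}_2}\to0$ (dominated convergence on the eigenbasis) gives $\widetilde q_n\to\widetilde q$ in $\mathcal{M}^2_{\mathcal{F}^0}(\mathcal{T};\mathcal{L}_2(V_{Q_0},H))$.

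Finally I would pass to the limit in the mild form of the backward equation \eqref{apqn}: the terminal term $S_n^{*}(T-t)G\widehat F_2\bar x_n(T)\to S^{*}(T-t)G\widehat F_2\bar x(T)$ in $L^2(\Omega;H)$ (split off $\bar x_n(T)-\bar x(T)$ and use $S_n^{*}\to S^{*}$ strongly with uniform bound); the Bochner integral converges by $q_n\to q$, $\bar x_n\to\bar x$, $S_n^{*}\to S^{*}$ strongly, and dominated convergence; and $\int_t^T S_n^{*}(r-t)\widetilde q_n(r)\,dW_0(r)\to\int_t^T S^{*}(r-t)\widetilde q(r)\,dW_0(r)$ in $L^2$ by the Itô isometry, after splitting $S_n^{*}(\widetilde q_n-\widetilde q)+(S_n^{*}-S^{*})\widetilde q$ and applying dominated convergence. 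The limit identity is exactly \eqref{Offset-AT}, so $(\bar x,(q,\widetilde q))$ is a mild solution of the FBSEE \eqref{Offset-AT}--\eqref{Mean-Field-AT}. The main obstacle I anticipate is the uniform-in-$n$ a priori estimate and the vanishing of the aggregate error $\rho_n$ in a form uniform in $t$: this is precisely where \Cref{as61}--\Cref{as62} (a uniformly bounded, strongly convergent family of decoupling fields $\eta_n$) and the convergence \eqref{kavp3} of $\varsigma_n$ are indispensable, and where some care is needed in combining the strong convergence of the Yosida semigroups with dominated convergence in the Bochner and stochastic integrals. Conceptually, the whole approximation scheme is forced by the mild-solution setting, in which one cannot apply Itô's formula to $-\eta(t)\bar x(t)+\varsigma(t)$ directly against the generator $A$; the Yosida regularization legitimizes the computation in \Cref{soappr}, and this theorem transfers its conclusion back to the original equations.
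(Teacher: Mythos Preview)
Your proposal is correct and follows essentially the same approach as the paper: define the limit closed-loop forward equation for $\bar x$, set $q=-\eta\bar x+\varsigma$, $\widetilde q=\widetilde\varsigma-\eta\sigma_0$, prove $\bar x_n\to\bar x$ via a Grönwall argument with a residual $\rho_n$ built from $(S_n-S)$, $(\eta_n-\eta)$ and $(\varsigma_n-\varsigma)$ terms, deduce $q_n\to q$ and $\widetilde q_n\to\widetilde q$, and then pass to the limit in the mild form of \eqref{apqn} to obtain \eqref{Offset-AT}. The only minor difference is that you claim convergence uniformly in $t$ (in $C_{\mathcal{F}^0}(\mathcal{T};L^2(\Omega;H))$) whereas the paper is content with pointwise-in-$t$ $L^2$-convergence; your stronger statement is still correct (the Yosida semigroups converge strongly uniformly on compacts, and the remaining error terms are dominated by integrals over $[0,T]$), and either version suffices for the final limit passage.
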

\begin{proof}
From \Cref{soappr}, the solution of the FBSEE \eqref{barxn}--\eqref{apqn} can be equivalently expressed as the following (decoupled) stochastic equations 
\begin{align} 
&\bar{x}_n(t)=S_n(t)\bar{\xi}-\int_{0}^{t}S_n(t-r)\big(\Phi_n(r)\bar{x}_n(r)-BB^{\ast}\varsigma_n(r)\big)dr+ \int_0^t S_n(t-r) \sigma_0 \, dW_0(r), \label{xnmild}\\
 &q_n(t)=S^{*}_n(T-t)G\widehat{F}_{2}\bar{x}_{n}(T)-\int_{t}^{T}S^{*}_n(r-t)\Big(\Pi(r)BB^{\ast}q_n(r)+\Lambda(r)\bar{x}_n(r)+D^{\ast} \sigma\Big)dr\notag \\  &\hspace{10.5cm}-\int_{t}^{T}S^{*}_n(r-t)\widetilde{q}_n(r)dW_0(r), \label{sys6ap}
 \end{align}
where $n \in \mathbb{N}$, and $\Phi_n(t)=BB^{\ast}(\eta_n(t)+\Pi(t))-F_1$, which results from substituting \eqref{qnwqn} into \eqref{barxn}. We note that \eqref{xnmild} is decoupled from \eqref{sys6ap}. Moreover, consider the stochastic evolution equation
\begin{equation} \label{barx}
\bar{x}(t)
= S(t)\bar{\xi}
- \int_{0}^{t} S(t-r)\big(\Phi(r)\bar{x}(r) - BB^{*}\varsigma(r)\big)\,dr
+ \int_{0}^{t} S(t-r)\sigma_0\,dW_0(r),
\end{equation}
where $\Phi(t) = BB^{*}\big(\eta(t) + \Pi(t)\big) - F_1$ and $\varsigma$ is the solution to the well-posed equation \eqref{vsig}.  The above stochastic evolution equation admits a unique mild solution (see \cite{hu1991adapted, guatteri2005backward}.). Moreover, based on \Cref{as61} and \Cref{as62}, and from \eqref{vsibound} and \eqref{vsi-n-bound}, we have
\begin{equation}
\mathbb{E}|\bar{x}_n(t)|^2 \leq C, \qquad \mathbb{E}|\bar{x}(t)|^2 \leq C,\quad \forall n \in \mathbb{N}, \quad t \in \mathcal{T}.
\end{equation} 

Now, we show that \eqref{xnmild} converges to \eqref{barx} in mean square by following the steps below. From \eqref{barx} and \eqref{xnmild}, we have 
\begin{equation}
 \bar{x}(t)-\bar{x}_n(t)=\rho_n(t)-\int_{0}^{t}S_n(t-r)\Phi_n(r)(\bar{x}(r)-\bar{x}_n(r))dr,
\end{equation}
where, for every $t \in \mathcal{T}$, $\rho_n(t)$ is given by
\begin{align}
\rho_n(t)
&= \big(S(t) - S_n(t)\big)\,\bar{\xi}  - \int_{0}^{t} \big(S(t-r) - S_n(t-r)\big)
   \big(\Phi(r)\,\bar{x}(r) - BB^{*}\varsigma(r)\big)\,dr  - \int_{0}^{t} S_n(t-r)\nonumber \allowdisplaybreaks\\
&\quad \times\Big(
   \big(\eta(r) - \eta_n(r)\big)\,\bar{x}(r)
   - BB^{*}\big(\varsigma(r) - \varsigma_n(r)\big)\Big)\,dr  + \int_{0}^{t} \big(S(t-r) - S_n(t-r)\big)\,\sigma_0\,dW_0(r).
\end{align}
Thus,
\begin{equation}
\mathbb{E}|\bar{x}(t)-\bar{x}_n(t)|^2
\le 2\,\mathbb{E}|\rho_n(t)|^2
   + C \int_0^t \mathbb{E}|\bar{x}(s)-\bar{x}_n(s)|^2\,ds,
   \qquad t\in\mathcal{T}. \label{ineq_gron}
\end{equation}
It is straightforward to verify that for all $t \in \mathcal{T}$
\begin{equation} \label{rhon}
  \mathbb{E}|\rho_n(t)|^2 \to 0\,\,\,\qquad \text{as}\,\,\, n \rightarrow \infty.
\end{equation}
Then, by Grönwall’s inequality, for each fixed \(t \in \mathcal{T}\), we have 
\begin{equation}
\mathbb{E}|\bar{x}(t)-\bar{x}_n(t)|^2
\le
\,C \mathbb{E}|\rho_n(t)|^2.
 \label{ineq_gron_exp}
 \end{equation}
Therefore, we obtain
\begin{equation} \label{barxncx}
  \lim_{n \to \infty }   \mathbb{E}|\bar{x}(t)-\bar{x}_n(t)|^2=0,
\qquad \forall t\in\mathcal{T}.
\end{equation}
Drawing ideas from \Cref{soappr}, for every $t\in\mathcal{T}$, we  define 
\begin{equation} \label{conqn}
q(t) \;:=\; -\,\eta(t)\,\bar{x}(t) \,+\, \varsigma(t),\qquad  \widetilde q(t) \;:=\; \widetilde\varsigma(t) \,-\, \eta(t)\,\sigma_0,
\end{equation}
and substitute these expressions into \eqref{barx}. After rearranging the terms, we obtain 
\begin{equation}\label{conqn1}
 \bar{x}(t)=S(t)\bar{\xi}-\int_{0}^{t}S(t-r)((BB^{\ast}\Pi(r)-F_1)\bar{x}(r)-BB^{\ast}q(r))dr + \int_0^t S(t-r) \sigma_0 \, dW_0(r).   
\end{equation}
We now aim to show that \eqref{conqn1} coincides with \eqref{Mean-Field-AT}, which implies that the latter admits a unique solution. To this end, we show that the term $q$ appearing in \eqref{conqn1} satisfies \eqref{Offset-AT}.

From \eqref{etaneta}, \eqref{kavp3} and \eqref{barxncx}, it can be shown that \eqref{qnwqn} converges to \eqref{conqn} in mean square. Specifically, $\forall t \in \mathcal{T}$,
\begin{align}
&\lim_{n \to \infty }  \mathbb{E}\left|q_n(t)-q(t) \right|^2=0, \label{conqnlh} \\   
&\lim_{n \to \infty }\mathbb{E}\int_{0}^{T}\!\|\widetilde{q}_n(t)-\widetilde{q}(t)\|^{2}\,dt=0.
\end{align}
We now show that the right-hand side of \eqref{sys6ap} converges, in mean square, to that of \eqref{Offset-AT}. When rearranged, the difference between these expressions is given by
\begin{align} \label{conqnrh}
&\Big[S^{*}(T-t)G\widehat{F}_{2}\bar{x}(T)
 -\int_{t}^{T}S^{*}(r-t)\Big(\Pi(r)BB^{\ast}q(r)+\Lambda(r)\bar{x}(r)+D^{*}\sigma\Big)\,dr
 -\int_{t}^{T}S^{*}(r-t)\widetilde{q}(r)\,dW_0(r)\Big] \nonumber\allowdisplaybreaks\\
&\,-\Big[S^{*}_n(T-t)G\widehat{F}_{2}\bar{x}_n(T)
 -\int_{t}^{T}S^{*}_n(r-t)\Big(\Pi(r)BB^{\ast}q_n(r)+\Lambda(r)\bar{x}_n(r)+D^{*}\sigma\Big)\,dr
 \nonumber\allowdisplaybreaks\\&\qquad-\int_{t}^{T}S^{*}_n(r-t)\widetilde{q}_n(r)\,dW_0(r)\Big] \nonumber\allowdisplaybreaks\\
=& \big(S^{*}(T-t)-S^{*}_n(T-t)\big)G\widehat{F}_{2}\bar{x}(T)+S^{*}_n(T-t)G\widehat{F}_{2}\big(\bar{x}(T)-\bar{x}_n(T)\big) \notag\allowdisplaybreaks\\
&- \int_{t}^{T}\big(S^{*}(r-t)-S^{*}_n(r-t)\big)
     \Big(\Pi(r)BB^{\ast}q(r)+\Lambda(r)\bar{x}(r)+D^{*}\sigma\Big)\,dr \nonumber\allowdisplaybreaks\\
&- \int_{t}^{T} S^{*}_n(r-t)\Big(\Pi(r)BB^{\ast}\big(q(r)-q_n(r)\big)
        + \Lambda(r)\big(\bar{x}(r)-\bar{x}_n(r)\big)\Big)\,dr \notag\allowdisplaybreaks\\
        &- \int_{t}^{T} S^{*}_{n}(r-t)\big(\widetilde{q}(r)-\widetilde{q}_n(r)\big)\,dW_0(r)- \int_{t}^{T} \big(S^{*}(r-t)-S^{*}_n(r-t)\big)\widetilde{q}(r)\,dW_0(r), 
\end{align}
which converges to zero in $L^2(\Omega;H)$ based on the dominated convergence theorem and the previously established convergence results. Up to now, we showed that for every $t\in\mfT$: (i) $q_n(t)=-\eta_n(t)\,\bar x_n(t)+\varsigma_n(t)$ and $\widetilde q_n(t)=\widetilde\varsigma_n(t)+\eta_n(t)\,\sigma_0,$ converge to $q(t) \;=\; -\,\eta(t)\,\bar{x}(t) \,+\, \varsigma(t)$ and $\widetilde q(t) \;=\; \widetilde\varsigma(t) \,+\, \eta(t)\,\sigma_0$, respectively, (ii) the right-hand-side of the stochastic evolution equation satisfied by $q_n(t)$ converges to that of \eqref{Offset-AT}. Hence, we conclude that for each $t \in \mfT$, $q(t) \;:=\; -\,\eta(t)\,\bar{x}(t) \,+\, \varsigma(t)$ satisfies  
\begin{equation} \label{qt11}
q(t)=S^{*}(T-t)G\widehat{F}_{2}\bar{x}(T)-\int_{t}^{T}S^{*}(r-t)\Big(\Pi(r)BB^{\ast}q(r)+\Lambda(r)\bar{x}(r)+D^{*}\sigma\Big)dr-\int_{t}^{T}S^{*}(r-t)\widetilde{q}(r)dW_0(r), a.s, 
\end{equation}
which coincides with \eqref{Offset-AT}. As $\bar{x}$
  satisfies the well-posed equation \eqref{barx} and is unique, the above stochastic evolution equation is well-defined and admits a unique solution (see \cite{hu1991adapted, guatteri2005backward}). The desired result follows from \eqref{barx} and \eqref{qt11} and the well-posedness of both equations. Hence, by adopting the decoupling field $q(t) \;:=\; -\,\eta(t)\,\bar{x}(t) \,+\, \varsigma(t),\,t\in\mathcal{T}$, we have demonstrated that there exists at least one solution to the FBSEE \eqref{Offset-AT}--\eqref{Mean-Field-AT}.
\end{proof}
\subsection{Uniqueness Results}
We have established the existence of a solution to the FBSEEs given by \eqref{Offset-AT}--\eqref{Mean-Field-AT}. 
We now turn to the uniqueness of the solution, taking inspiration from the proof of \cite[Thm.~4.2]{federico2024linear}. 
To this end, we first impose the following additional assumption.
\begin{assumption} 
\label{ass63} 
In the \( N \)-player game model \eqref{statecom}–\eqref{cosfinNcom}, and hence in the corresponding limiting game \eqref{dylimi-S0com}--\eqref{cosfinlimi-S0com} as well as the mean-field consistency equations \eqref{MF-eq-1}--\eqref{mfcom}, the following conditions hold.
\begin{itemize}
    \item[(i)] $F_1=0$; that is, there is no mean field coupling in the drift coefficient of the agents’ dynamics.
    \item[(ii)] $-G\widehat{F}_{2}\in \Sigma^{+}(H)$, 
    \item[(iii)] $\Lambda(t)=D^{*}\Pi(t) F_{2}+\Pi(t) F_{1}-M\widehat{F}_{1} \in \Sigma^{+}(H)$ for $\lambda$-a.e.\ $t \in \mathcal{T}$.
\end{itemize}
\end{assumption}
\begin{theorem}
Suppose that \Cref{det-diff}--\Cref{ass63} hold. The FBSEEs given by \eqref{Riccati-AT}--\eqref{Mean-Field-AT} admit a unique solution.
\end{theorem}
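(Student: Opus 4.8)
The plan is as follows. Existence has already been established in \Cref{soah}, so only uniqueness remains. The Riccati equation \eqref{Riccati-AT} decouples from \eqref{Offset-AT}--\eqref{Mean-Field-AT} and, as shown in the proof of \Cref{thm:opt-cntrl-limiting-common-noise}, admits a unique solution $\Pi\in C_{s}(\mathcal{T};\mathcal{L}(H))$; hence it suffices to show that, for this $\Pi$ and under \Cref{ass63}, the FBSEE \eqref{Offset-AT}--\eqref{Mean-Field-AT} has at most one mild solution. I would take two solutions $(\bar x^{(1)},q^{(1)},\widetilde q^{(1)})$ and $(\bar x^{(2)},q^{(2)},\widetilde q^{(2)})$, set $\delta\bar x:=\bar x^{(1)}-\bar x^{(2)}$, $\delta q:=q^{(1)}-q^{(2)}$, $\delta\widetilde q:=\widetilde q^{(1)}-\widetilde q^{(2)}$, and note that since $F_1=0$ by \Cref{ass63}(i) and the data $\bar\xi$, $D^{*}\sigma$, $\sigma_0$ cancel in the differences, the triple $(\delta\bar x,\delta q,\delta\widetilde q)$ is the mild solution of the homogeneous linear FBSEE
\begin{align*}
d\,\delta\bar x(t)&=\big(A\,\delta\bar x(t)-BB^{*}\Pi(t)\,\delta\bar x(t)+BB^{*}\delta q(t)\big)\,dt,\qquad \delta\bar x(0)=0,\\
d\,\delta q(t)&=-\big(A^{*}\delta q(t)-\Pi(t)BB^{*}\delta q(t)-\Lambda(t)\,\delta\bar x(t)\big)\,dt+\delta\widetilde q(t)\,dW_0(t),\qquad \delta q(T)=G\widehat{F}_2\,\delta\bar x(T).
\end{align*}

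The heart of the argument is an energy identity obtained by applying Itô's formula to the pairing $t\mapsto\langle\delta q(t),\delta\bar x(t)\rangle$. Since only mild solutions are available, this must be done rigorously by first passing to the Yosida approximations $A_n=An(nI-A)^{-1}$ and the associated strong solutions — exactly as in the proofs of \Cref{thm:opt-cntrl-limiting-common-noise} and \Cref{soah} — and then letting $n\to\infty$ using the uniform bounds and convergence estimates established there. Because $\delta\bar x$ carries no $dW_0$-integral, no Itô correction term arises; the stochastic integral is a genuine martingale and drops after taking expectations; the terms $\langle A^{*}\delta q,\delta\bar x\rangle$ and $\langle\delta q,A\delta\bar x\rangle$ cancel, and since $\Pi(t)=\Pi(t)^{*}$ so do $\langle\Pi(t)BB^{*}\delta q,\delta\bar x\rangle$ and $\langle\delta q,BB^{*}\Pi(t)\delta\bar x\rangle$ — it is precisely this clean cancellation, with no residual $\langle\delta q,F_1\delta\bar x\rangle$ contribution, that requires $F_1=0$. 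Using $\delta\bar x(0)=0$ and $\delta q(T)=G\widehat{F}_2\,\delta\bar x(T)$, what remains is the identity
\begin{equation*}
\mathbb{E}\,\big\langle G\widehat{F}_2\,\delta\bar x(T),\,\delta\bar x(T)\big\rangle
=\mathbb{E}\!\int_0^T\Big(\big\langle\Lambda(t)\,\delta\bar x(t),\,\delta\bar x(t)\big\rangle+\big|B^{*}\delta q(t)\big|^{2}\Big)\,dt .
\end{equation*}

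Now I would invoke the sign conditions in \Cref{ass63}: the left-hand side equals $-\mathbb{E}\langle(-G\widehat{F}_2)\delta\bar x(T),\delta\bar x(T)\rangle\le0$ by part (ii), while both integrands on the right are nonnegative — the first by part (iii) and the second trivially. Hence both sides vanish; in particular $B^{*}\delta q=0$ for $dt\otimes d\mathbb{P}$-a.e.\ $(t,\omega)$, so $BB^{*}\delta q\equiv0$. The forward equation then reduces to $\delta\bar x(t)=-\int_0^t S(t-r)BB^{*}\Pi(r)\,\delta\bar x(r)\,dr$ with $\delta\bar x(0)=0$, and Grönwall's inequality forces $\delta\bar x\equiv0$. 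Substituting $\delta\bar x\equiv0$ back into the backward equation gives $d\,\delta q(t)=-(A^{*}-\Pi(t)BB^{*})\delta q(t)\,dt+\delta\widetilde q(t)\,dW_0(t)$ with terminal condition $\delta q(T)=0$, whose only mild solution is $(\delta q,\delta\widetilde q)\equiv(0,0)$ by the well-posedness of backward linear stochastic evolution equations in Hilbert spaces \cite{hu1991adapted,guatteri2005backward}. This yields uniqueness of $(\bar x,q,\widetilde q)$, and together with the uniqueness of $\Pi$ and \Cref{soah} it proves that \eqref{Riccati-AT}--\eqref{Mean-Field-AT} admits a unique solution.

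I expect the main obstacle to be the rigorous justification of Itô's formula for $\langle\delta q,\delta\bar x\rangle$ in the mild-solution setting — setting up the Yosida regularization, controlling all the error terms, and passing to the limit — together with the careful bookkeeping needed to confirm that the unbounded-operator terms and the $BB^{*}\Pi$ cross terms cancel exactly, leaving the clean sign-definite identity above. The sign analysis itself, modeled on \cite[Thm.~4.2]{federico2024linear}, is then immediate. A minor additional point is to check that the positivity hypotheses in \Cref{ass63} are consistent with \Cref{det-diff} and with $\Lambda\in C_{s}(\mathcal{T};\mathcal{L}(H))$.
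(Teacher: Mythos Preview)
Your proposal is correct and follows essentially the same approach as the paper: take the difference of two solutions, pass to the Yosida approximation, apply Itô's formula to the pairing $\langle\delta q,\delta\bar x\rangle$, let $n\to\infty$, and use the sign conditions in \Cref{ass63} to force $BB^{*}\delta q\equiv0$ and then $\delta\bar x\equiv0$, $(\delta q,\delta\widetilde q)\equiv0$. The only cosmetic difference is that the paper retains $F_1$ in the difference equations and in the final identity (which then also contains a $\langle F_1\,\delta\bar x,\delta q\rangle$ term) and invokes $F_1=0$ only at the very end, whereas you set $F_1=0$ from the outset.
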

\begin{proof} Let 
$(\bar{x}_1,(q_1,\widetilde{q}_1))$ and $(\bar{x}_2,(q_2,\widetilde{q}_2))$ 
be two distinct mild solutions of the FBSEE \eqref{Offset-AT}--\eqref{Mean-Field-AT}.
Define
$$
\bar{x}^{\lozenge}:=\bar{x}_1-\bar{x}_2,\qquad
q^{\lozenge}:=q_1-q_2,\qquad
\widetilde{q}^{\lozenge}:=\widetilde{q}_1-\widetilde{q}_2.
$$
By taking the difference between the corresponding components of the FBSEE \eqref{Offset-AT}--\eqref{Mean-Field-AT} satisfied by the solutions $(\bar{x}_1,(q_1,\widetilde{q}_1))$ and $(\bar{x}_2,(q_2,\widetilde{q}_2))$, we obtain, for all $t\in\mathcal{T}$,
\begin{align}
q^{\lozenge}(t)
&= S^{*}(T-t)G\widehat{F}_{2}\,\bar{x}^{\lozenge}(T)
   -\int_{t}^{T}S^{*}(r-t)\Big(\Pi(r)BB^{\ast}q^{\lozenge}(r)+\Lambda(r)\bar{x}^{\lozenge}(r)\Big)\,dr
   \notag \\ & \hspace{8.5cm}-\int_{t}^{T}S^{*}(r-t)\,\widetilde{q}^{\lozenge}(r)\,dW_0(r), \label{diff-q}\\
\bar{x}^{\lozenge}(t)
&= -\int_{0}^{t}S(t-r)\Big((BB^{\ast}\Pi(r)-F_1)\bar{x}^{\lozenge}(r)-BB^{\ast}q^{\lozenge}(r)\Big)\,dr. \label{diff-x}
\end{align}
Moreover, the Yosida approximating sequence $\big(\bar{x}^{\lozenge}_n,(q^{\lozenge}_n,\widetilde{q}^{\lozenge}_n)\big)$ of 
$\big(\bar{x}^{\lozenge},(q^{\lozenge},\widetilde{q}^{\lozenge})\big)$, for each $n\in\mathbb{N}$ and $t\in \mathcal{T}$, satisfies the FBSEEs
\begin{align}
q^{\lozenge}_n(t)
&= S_n^{*}(T-t)\,G\widehat{F}_{2}\,\bar{x}^{\lozenge}_n(T)
   -\int_{t}^{T} S_n^{*}(r-t)\Big(\Pi(r)BB^{\ast}q^{\lozenge}_n(r)+\Lambda(r)\bar{x}^{\lozenge}_{n}(r)\Big)\,dr
 \notag  \\ & \hspace{8.5cm} -\int_{t}^{T} S_n^{*}(r-t)\,\widetilde{q}^{\lozenge}_n(r)\,dW_0(r), \label{diff-qn}\\
\bar{x}^{\lozenge}_n(t)
&= -\int_{0}^{t} S_n(t-r)\Big(\big(BB^{\ast}\Pi(r)-F_1\big)\bar{x}^{\lozenge}_n(r)-BB^{\ast}q^{\lozenge}_n(r)\Big)\,dr, \label{diff-xn}
\end{align}
which admits a unique strong (hence mild) solution. Applying Itô's formula to $\langle q^{\lozenge}_n(t), \bar{x}^{\lozenge}_n(t) \rangle$ yields
\begin{equation}
d\langle q^{\lozenge}_n(t),\bar{x}^{\lozenge}_n(t)\rangle
= \Big[
\langle \Lambda \bar{x}^{\lozenge}_n(t),\bar{x}^{\lozenge}_n(t)\rangle
+\langle BB^{\ast}q^{\lozenge}_n(t), q^{\lozenge}_n(t)\rangle + \langle F_1 \bar{x}^{\lozenge}_n(t), q_n(t)\rangle
\Big]dt
- \langle \widetilde{q}^{\lozenge}_n(t),\bar{x}^{\lozenge}_n(t)\rangle\,dW_0(t),
\end{equation}
where $\langle q^{\lozenge}_n(0), \bar{x}^{\lozenge}_n(0) \rangle=0$ and $\langle q^{\lozenge}_n(T), \bar{x}^{\lozenge}_n(T) \rangle=\,\langle G\widehat{F}_{2}\,\bar{x}^{\lozenge}_{n}(T),\bar{x}^{\lozenge}_{n}(T)\rangle$. Integrating from $0$ to $T$ and taking expectations, we obtain 
\begin{align}
\mathbb{E}\,\langle q^{\lozenge}_n(T),\bar{x}^{\lozenge}_n(T)\rangle
= \mathbb{E}\!\int_0^T\!\Big[
\langle \Lambda(r)\,\bar{x}^{\lozenge}_n(r),\bar{x}^{\lozenge}_n(r)\rangle
+\langle BB^{\ast}q^{\lozenge}_n(r), q^{\lozenge}_n(r)\rangle + \langle F_1 \bar{x}^{\lozenge}_n(r), q^{\lozenge}_n(r)\rangle
\Big]\,dr .
\end{align}
Moreover, the Yosida approximating sequence $\big(\bar{x}^{\lozenge}_n,(q^{\lozenge}_n,\widetilde{q}^{\lozenge}_n)\big)$ of 
$\big(\bar{x}^{\lozenge},(q^{\lozenge},\widetilde{q}^{\lozenge})\big)$, for each $n\in\mathbb{N}$ and $t\in \mathcal{T}$, satisfies
\begin{align}
q^{\lozenge}_n(t)
&= S_n^{*}(T-t)\,G\widehat{F}_{2}\,\bar{x}^{\lozenge}(T)
   -\int_{t}^{T} S_n^{*}(r-t)\Big(\Pi(r)BB^{\ast}q^{\lozenge}_n(r)+\Lambda(r)\bar{x}^{\lozenge}(r)\Big)\,dr
 \notag  \\ & \hspace{8.5cm} -\int_{t}^{T} S_n^{*}(r-t)\,\widetilde{q}^{\lozenge}_n(r)\,dW_0(r), \label{diff-qn}\\
\bar{x}^{\lozenge}_n(t)
&= -\int_{0}^{t} S_n(t-r)\Big(\big(BB^{\ast}\Pi(r)-F_1\big)\bar{x}^{\lozenge}_n(r)-BB^{\ast}q^{\lozenge}(r)\Big)\,dr, \label{diff-xn}
\end{align}which admits a unique strong (hence mild) solution. Applying Itô's formula to $\langle q^{\lozenge}_n(t), \bar{x}^{\lozenge}_n(t) \rangle$ yields
\begin{equation}
d\langle q^{\lozenge}_n(t),\bar{x}^{\lozenge}_n(t)\rangle
= \Big[
\langle \Lambda \bar{x}^{\lozenge}(t),\bar{x}^{\lozenge}_n(t)\rangle
+\langle BB^{\ast}q^{\lozenge}(t), q^{\lozenge}_n(t)\rangle + \langle F_1 \bar{x}^{\lozenge}_n(t), q^{\lozenge}_n(t)\rangle
\Big]dt
- \langle \bar{x}^{\lozenge}_n(t),\widetilde{q}^{\lozenge}_n(t)dW_0(t)\rangle\,,
\end{equation}
where $\langle q^{\lozenge}_n(0), \bar{x}^{\lozenge}_n(0) \rangle=0$ and $\langle q^{\lozenge}_n(T), \bar{x}^{\lozenge}_n(T) \rangle=\,\langle G\widehat{F}_{2}\,\bar{x}^{\lozenge}(T),\bar{x}^{\lozenge}_{n}(T)\rangle$. Integrating from $0$ to $T$ and taking expectations, we obtain 
\begin{align}
\mathbb{E}\,\langle q^{\lozenge}_n(T),\bar{x}^{\lozenge}(T)\rangle
= \mathbb{E}\!\int_0^T\!\Big[
\langle \Lambda(r)\,\bar{x}^{\lozenge}(r),\bar{x}^{\lozenge}_n(r)\rangle
+\langle BB^{\ast}q^{\lozenge}(r), q^{\lozenge}_n(r)\rangle + \langle F_1 \bar{x}^{\lozenge}_n(r), q^{\lozenge}_n(r)\rangle
\Big]\,dr .
\end{align}
Taking the limit as $n\to\infty$, we get
\begin{align} \label{unif}
\mathbb{E}\,\langle G\widehat{F}_{2}\,\bar{x}^{\lozenge}(T),\bar{x}^{\lozenge}(T)\rangle
= \mathbb{E}\!\int_0^T\!\Big[
\langle \Lambda(r)\,\bar{x}^{\lozenge}(r),\bar{x}^{\lozenge}(r)\rangle
+\langle BB^{\ast}q^{\lozenge}(r), q^{\lozenge}(r)\rangle + \langle F_1 \bar{x}^{\lozenge}(r), q^{\lozenge}(r)\rangle
\Big]\,dr.
\end{align}
Note that \Cref{ass63} implies that both sides of \eqref{unif} are zero. It follows that 
\begin{equation} \label{iii2}
BB^{\ast}q^{\lozenge}(t)=0, \quad \mathbb{P}\otimes\lambda\text{-a.e.}
\end{equation}
The final conclusion follows by substituting \eqref{iii2} into \eqref{diff-x}, which yields
\[
\bar{x}^{\lozenge}(t)=0,\qquad \mathbb{P}\text{-a.s. for each }t\in\mathcal{T},
\]
and subsequently obtaining $(q^{\lozenge}(t)=0,\;\widetilde{q}^{\lozenge}(t)=0)$, $\mathbb{P}\otimes\lambda$-a.e.\ from \eqref{diff-q}.
\end{proof}

\subsection{Discussion of Assumptions}
While \Cref{det-diff} and \Cref{ass63}-(i) are clear, we provide sufficient conditions under which \Cref{as61}, \Cref{as62}, \Cref{ass63}-(ii), and \Cref{ass63}-(iii) hold.

Clearly, \Cref{ass63}-(ii) holds if $\widehat{F}_2 \le 0$ and $\widehat{F}_2$ commutes with $G$. Moreover, given that  \Cref{ass63}-(i) holds, condition \Cref{ass63}-(iii) holds if, for instance, $D = F_2$, 
which implies that $D^{*}\Pi(t)F_2 \in \Sigma^{+}(H)$, and in addition $M\widehat{F}_1 \in \Sigma(H)$ and 
$D^{*}\Pi(t)F_2 - M\widehat{F}_1 \geq 0$ for $\lambda$-a.e.\ $t \in \mathcal{T}$. 
The most straightforward example arises when \( D = F_2 = 0 \) and \( M\widehat{F}_1 \leq 0 \), as indicated in \cite{federico2024linear} for an MFG model without common noise and with a deterministic diffusion coefficient, a setting that is also relevant for the framework considered in this paper. Therefore, putting everything together, \Cref{ass63}‑(ii) and \Cref{ass63}‑(iii) hold if we assume that $D = F_2 = 0$, $\widehat{F}_1 \le 0$ and $\widehat{F}_1$ commutes with $M$, and that $\widehat{F}_2 \le 0$ and $\widehat{F}_2$ commutes with $G$. 
We note that the conditions $\widehat{F}_1 \le 0$ and $\widehat{F}_2 \le 0$ correspond to the case in which a representative agent aims to keep its state away from the average state of the population, which is particularly relevant in congestion or crowd-avoidance contexts.
  
Moreover, \Cref{as61} and \Cref{as62} hold whenever \Cref{ass63} is satisfied. Specifically, $\eta_n$ and $\eta$ are the unique strict and mild solutions of the operator-valued Riccati equations given by
\begin{align}
&\dot\eta_n(t)
\;+\;\eta_n(t)\big(A_n - BB^{*}\Pi(t)\big)
\;+\;\big(A_n^{*} - \Pi(t)BB^{*}\big)\eta_n(t) \nonumber\\
&\hspace{6.5cm}
\;-\;\eta_n(t)BB^{*}\eta_n(t)
\;+\;\Lambda(t)
\;=\;0,
\qquad \eta_n(T)=-G\,\widehat F_2, \label{etan1}\\
&\dot\eta(t)
\;+\;\eta(t)\big(A - BB^{*}\Pi(t)\big)
\;+\;\big(A^{*} - \Pi(t)BB^{*}\big)\eta(t) \nonumber\\
&
\hspace{6.5cm}-\;\eta(t)BB^{*}\eta(t)
\;+\;\Lambda(t)
\;=\;0,
\qquad \qquad \eta(T)=-G\,\widehat F_2, \label{eta}
\end{align}
respectively. When \Cref{ass63} holds, \eqref{etan1} and \eqref{eta} fall into the category of positive-type operator-valued Riccati equations \cite{ichikawa1979dynamic, guatteri2005backward,tessitore1992some, federico2024linear}. We also note that these equations can be written as the difference between two classical operator-valued Riccati equations that do not involve the term \( BB^{\ast}\Pi(t) \). For instance, the solution $\eta(t)$ to \eqref{eta} can be expressed as the difference $ R(t) - \Pi(t)$, 
where $\Pi(t)$ satisfies \eqref{Riccati-AT}
and $R(t)$ satisfies 
\begin{align}
\dot R(t) + R(t)A + A^{*}R(t) - R(t)BB^{*}R(t) + \Lambda(t) + D^{\ast}\Pi(t)D + M &= 0,
& R(T) &= G - G\,\widehat F_2.
\end{align}
This decomposition allows one to derive \(\eta_n\) as the Yosida approximation sequence of \(\eta\) 
and to obtain the corresponding convergence result indicated in \Cref{as62}, 
following, for instance, the arguments in \cite[Prop.~2.2]{tessitore1992some}.

\begin{remark}(Well-Posedness of Hilbert Space-Valued LQ MFGs without Common Noise over Arbitrary Finite Time Horizons)
In the absence of common noise, i.e. when \( \sigma_0 = 0 \) in addition to \Cref{det-diff}, it can be shown that under \Cref{as61} and \Cref{as62}, there exists a solution to the corresponding mean-field consistency condition, expressed as a set of coupled forward-backward deterministic evolution equations, over an arbitrary finite time horizon. Moreover, if \Cref{ass63} holds, this solution is unique. Hence, the analysis developed in this section extends the well-posedness results of \cite{liu2025hilbert}, which were established only for small time horizons.
\end{remark}  
\begin{remark}(Nash and 
$\epsilon$-Nash Equilibria over Arbitrary Finite Time Horizons) Under \Cref{init-cond-iid}--\Cref{Info-set-Adm-Cntrl-CN} and \Cref{det-diff}--\Cref{ass63}, the results of \Cref{Nash-eqcom} and \Cref{E-Nash-thm} hold, in particular without the need for the contraction condition \eqref{contrcom}.
\end{remark}

\section{Concluding Remarks}
We developed the theory of LQ MFGs 
with common noise in Hilbert spaces. 
The presence of common noise introduces new analytical challenges, 
as the mean-field consistency condition naturally leads 
to a fully coupled system of forward-backward stochastic evolution equations in Hilbert spaces. 
We established the well-posedness of such systems over both small and arbitrary finite time horizons.

\bibliographystyle{elsarticle-num-names} 

\bibliography{sample}

\end{document}